\author{Gerard Brunick\footnote{
    University of Texas at Austin,
    Department of Mathematics,
    1 University Station C1200,
    Austin, TX, 78712.
    E-mail: gbrunick@math.utexas.edu}
}
\def \keywords{\noindent \textbf{Keywords: }}
\def \subclass{\noindent
  \textbf{Mathematics Subject Classification (2010): }}
\numberwithin{equation}{section}
\newtheorem{theorem}{Theorem}[section]
\newtheorem{lemma}[theorem]{Lemma}
\newtheorem{corollary}[theorem]{Corollary}
\theoremstyle{definition}
\newtheorem{definition}[theorem]{Definition}
\theoremstyle{remark}
\newtheorem{remark}[theorem]{Remark}
\def \dx#1{\textstyle\frac{\partial}{\partial #1}}
\def \dx [#1]#2 {\frac{\partial #1}{\partial #2}}
\def \dfx #1#2{\frac{\partial #1}{\partial #2}}
\let \minn \wedge
\let \maxx \vee
\def \cross {\mathord{\times}}
\def \Smu{S^{d_0}_\mu}
\def \atilde {\tilde a}
\def \btilde {\tilde b}
\def \dxx #1#2{\textstyle\frac{\partial^2}{\partial #1 \partial #2}}
\def \dfxx #1#2#3{\frac{\partial^2 #1}{\partial #2 \partial #3}}
\def \dil {\delta}
\def \dilbar {\bar \delta}
\def \dist{\operatorname{dist}}
\def \Ahat {\widehat A}
\def \Atilde {\widetilde A}
\def \ahat {\widehat a}
\def \atilde {\widetilde a}
\def \btilde {\widetilde b}
\def \bhat {\widehat b}
\def \Bhat {\widehat B}
\def \chat {\widehat c}
\def \ctilde {\widetilde c}
\def \Chat {\widehat C}
\def \dbar {{\bar d}}
\def \fhat {\widehat f}
\def \hhat {\widehat h}
\def \Lhat {\widehat L}
\def \Phat{\widehat {\bb P}}
\def \xbar {{\bar x}}
\def \ybar {{\bar y}}
\def \zbar {{\bar z}}
\def \muhat {\widehat \mu}
\def \sigmahat {\widehat \sigma}
\def \sigmatilde {\widetilde \sigma}
\def \bb {\mathbb}
\def \cond {\mid}
\def \scr {\mathscr}
\def \diff {d}
\def \half {{\tfrac{1}{2}}}
\def \defeq {=}
\def \cross {\mathord \times}
\DeclareMathAlphabet{\mathbbnumber}{U}{bbold}{m}{n}
\def \ind #1{\mathbbnumber{1}_{#1}}
\def \indd #1{\mathbbnumber{1}_{\{#1\}}}
\def \inv {^{-1}}
\def \tr{\operatorname{tr}}
\def \op {\circ}
\begin{document}

\title{Uniqueness in Law for a Class of Degenerate
       Diffusions with Continuous Covariance}

\maketitle

\begin{abstract}
  We study the martingale problem associated with the operator
  \begin{equation*}
    L u = \partial_s u(s,x)
        + \frac{1}{2} \sum_{i,j=1}^{d_0} a^{ij} \partial_{ij} u(s,x)
        + \sum_{i,j=1}^d B^{ij} x^j \partial_i u(s,x),
  \end{equation*}
  where $d_0 \leq d$.  We show that the martingale problem is
  well-posed when the function $a$ is continuous and strictly
  positive-definite on $\bb R^{d_0}$ and the matrix $B$ takes a
  particular lower-diagonal, block form.  We then localize this result
  to show that the martingale problem remains well-posed when $B$ is
  replaced by a sufficiently smooth vector field whose Jacobian matrix
  satisfies a nondegeneracy condition.
  \medskip

  \keywords{
    Martingale problem,
    Stochastic differential equations,
    Degenerate parabolic operators,
    Homogeneous groups
  }\\
  \subclass{60H10, 35K65}
\end{abstract}

\section{Introduction} \label{sec:1}

In this paper we consider stochastic differential equations (SDEs) of
the form
\begin{equation} \label{eq:1.1}
  \begin{split}
    \diff X_t &= b(t, X_t) \, \diff t + \sigma(t, X_t) \, \diff W_t,
  \end{split}
\end{equation}
where the process $X$ takes values in $\bb R^d$ and $W$ is a Brownian
motion of dimension $d_0 \leq d$.  We provide conditions which are
sufficient to ensure that the solution to this SDE is unique in law
when the covariance function $a = \sigma \sigma^T$ is degenerate and
continuous.

When $d_0 = d$, the drift is bounded, and the covariance function is
bounded and uniformly positive definite on compact sets, then a number
of sufficient conditions are available which ensure uniqueness in law
for the SDE~\eqref{eq:1.1}.  Stroock and Varadhan
\cite{stroock1979mdp} have shown that weak uniqueness holds when the
covariance is continuous in the spacial variables, and this continuity
is uniform on compact time sets.  More recently, Bramanti and
Cerutti~\cite{bramanti1993scpp} have provided an estimate which
implies that the solution is unique when the covariance function is
VMO continuous in space and time, and Krylov~\cite{krylov2007peev} has
relaxed this condition to VMO-continuity in the spacial variables
only.

If we retain the assumption that the covariance is uniformly positive
definite on compact sets and we further assume that the covariance is
a function of the spacial variables only, then more results are
available.  Krylov~\cite{krylov1969dpri} has shown that uniqueness
holds for all measurable covariance functions when $d \leq 2$.  Bass
and Pardoux~\cite{bass1987udpc} show that uniqueness holds when $\bb
R^d$ can divided into a finite number of polyhedrons such that the
covariance function is constant on each polyhedron.  Cerutti et
al.~\cite{cerutti1991usdd} show that uniqueness holds when the
covariance is continuous outside of a countable set that has a single
cluster point.  Gao~\cite{gao1993mpdo} shows that uniqueness holds
when the covariance function is continuous on the sets $\{x \in \bb
R^d : x^1 > 0\}$ and $\{x \in \bb R^d : x^1 \leq 0\}$.
Safonov~\cite{safonov1994wuse} shows that weak uniqueness holds when
the set of discontinuities of the covariance function has zero
$\alpha$-Hausdorff measure for sufficiently small $\alpha$.
Krylov~\cite{krylov2004wusd} gives a number of results which may be
combined to produce weak uniqueness results for a variety of settings.
Finally, Nadirashvili \cite{nadirashvili1997nmpd} provides a
counterexample which shows that uniqueness may not hold if the
covariance function is only assumed to be measurable and $d \geq 3$.

In the case of multidimensional diffusions with degenerate covariance,
fewer results are available.  It is a classical result that pathwise
uniqueness holds if the coefficients $\sigma$ and $b$ are Lipschitz
continuous.  Figalli \cite{figalli2008eums} has shown that uniqueness
holds for the associated Stochastic Lagrangian Flow when the
covariance is a bounded, deterministic function of time and the drift
is a BV vector field whose divergence is controlled.  Le~Bris and
Lions~\cite{le2008eusf} provide estimates for the forward equation
associated with the SDE~\eqref{eq:1.1}, and they sketch how these
results may be used to produce weak uniqueness results for SDEs whose
coefficients possess sufficient Sobolev regularity.

In contrast to the results just mentioned, we consider a setting where
the null space of the covariance may be nontrivial everywhere and the
covariance is only assumed to be a continuous function time and space.
We are able to obtain weak uniqueness results in this setting by
imposing conditions on the drift which ensure that the process is, in
some sense, locally hypoelliptic.

We will delay the precise statement of our results to
Section~\ref{sec:5}, and instead give two examples that illustrate
the kinds of SDEs that can be handled.  To present the first example,
suppose that $d = n d_0$ for some $n \geq 2$ and let $X_t = (X^1_t,
\dots, X^d_t)$.  We then define the SDE:
\begin{equation} \label{eq:1.2}
  \left\{
  \begin{alignedat}{2}
    X^i_t &= \bhat^i(t, X_t) \, \diff t
             + \sum_{j=1}^{d_0} \sigmahat^{ij}(t,X_t) \, \diff W^j_t,
    &\qquad &1 \leq i \leq d_0,
    \\
    X^i_t &= X^{i-d_0}_t \, \diff t, &\qquad &d_0 < i \leq d,
  \end{alignedat}\right.
\end{equation}
where $W$ is a $d_0$-dimensional Brownian motion and the
functions $\bhat$ and $\sigmahat$ may depend upon all of the
components of the process $X$.  Notice that if we rewrite the
equation~\eqref{eq:1.2} in the form~\eqref{eq:1.1}, then $\sigma
\sigma^T$ is of rank $d_0 < d$ everywhere.  Theorem~\ref{thm:5.10}
asserts that existence and uniqueness in law hold for the
SDE~\eqref{eq:1.2} when $\bhat$ and $\sigmahat$ satisfy a linear
growth condition and $\sigmahat \sigmahat^T$ is continuous and
strictly positive definite on $\bb R^{d_0}$.

We can also handle a situation where the drift of the finite variation
components of $X$ is given by a sufficiently smooth function that
satisfies a local nondegeneracy condition.  To state this example,
fix $d_0 \geq d/2$, and write $x \in \bb R^d$ and $X$ in the form $x =
(x', x'')$ and $X = (X', X'')$, where the first coordinate denotes the
first $d_0$ components, and the second component denotes the remaining
$d - d_0$ components.  Now consider the following SDE written in
vector form:
\begin{equation} \label{eq:1.3}
  \left\{
  \begin{aligned}
    X'_t &= b'(t, X_t) \, \diff t + \sigmatilde(t,X_t) \, \diff W_t,
    \\
    X''_t &= b''(t,X_t) \, \diff t,
  \end{aligned}\right.
\end{equation}
where $W$ is a $d_0$-dimensional Brownian motion, $b'$ takes values in
$\bb R^{d_0}$, $\sigmatilde$ takes values in the space of $d_0 \cross
d_0$-matrices, and $b''$ takes values in $\bb R^{d-d_0}$.  We now
assume that all of the coefficients satisfy a linear growth condition,
$\sigmatilde$ is continuous, and $b'' \in C^2$.  We also need to
impose nondegeneracy conditions on both $\sigmatilde$ and $b''$. We
assume that $\sigmatilde \sigmatilde^T$ is strictly positive definite,
and we assume that the Jacobian matrix of $b''$ with respect to the
variables $x'$ is of rank $d - d_0$ at each point.  Under these
conditions, it follows from Theorem~\ref{thm:5.14} that existence and
uniqueness in law hold for the SDE~\eqref{eq:1.3}.

To obtain these results, we follow the approach developed by Stroock
and Varadhan \cite{stroock1969dpcc1, stroock1969dpcc2,
  stroock1979mdp}.  We first produce a Calder{\'o}n-Zygmund-type
estimate for the solutions of Kolmogorov's backward equation
\begin{equation} \label{eq:1.4}
  \partial_s u + L^{a,B} u = f,
\end{equation}
where
\begin{equation} \label{eq:1.5}
  L^{a,B} u(s,x) =
  \frac{1}{2} \sum_{i,j=1}^{d_0} a^{ij}(s,x) \partial_{ij} u(s,x)
  + \sum_{i,j=1}^d B^{ij} x^j \partial_j u(s,x),
\end{equation}
with $d_0 \leq d$ and $B$ is a fixed matrix that satisfies a
structural condition given in Section~\ref{sec:2}.  We then make a
perturbation argument to produce a local uniqueness result, followed
by a localization argument to produce a global result.

Before we close the introduction, we should mention that
equation~\eqref{eq:1.4} has been studied rather extensively, and we
will not attempt to give a comprehensive account of the literature.
Instead, we refer the reader to the survey
article~\cite{lanconelli2002lnue} and we mention only two references.
Lanconelli and Polidoro~\cite{lanconelli1994cheo} identify a
homogeneous group with respect to which the operator $\partial_s +
L^{a,B}$ is left-translation invariant when $a$ is constant.  We make
extensive use of this group structure in everything that follows.
Bramanti, Cerutti, and Manfredini~\cite{bramanti1996les} give
estimates for solutions of \eqref{eq:1.4} when the $a$ is a
VMO-continuous function of space and time.  These results are obtained
by combining estimates from the constant coefficient case with deep
results about the commutators of singular integrals on homogeneous
spaces from~\cite{bramanti1996csih}.  While these result are in many
ways more sophisticated than the approach that we take in
Section~\ref{sec:4}, they do not imply the estimates that we obtain.
In particular, we study the case where $a$ is a measurable function of
time only, and coefficients in this class need not be
VMO-continuous. Moreover, there are some technical challenges which
must be overcome before the estimates obtained in
\cite{bramanti1996les} may be used to obtain weak uniqueness results
for SDEs with discontinuous and degenerate covariance.  We refer the
reader to Remark~\ref{thm:5.5} for a more detailed discussion of the
issue that arises.

The outline of the paper is as follows.  In Section~\ref{sec:2} we
introduce notation.  In Section~\ref{sec:3} we study the transition
function which will play the role of a fundamental solution for the
equation~\eqref{eq:1.4}.  In Section~\ref{sec:4}, we derive the
$L^p$-estimate upon which our local uniqueness result depends, and in
Section~\ref{sec:5} we provide the announced uniqueness results.

\section{Notation and Geometric Structure} \label{sec:2}

We let $|\cdot|$ denote the Euclidean norm with associated inner
product $\langle \cdot, \cdot \rangle$.  We use superscripts to access
the components of a vector and we start numbering our components at
zero when the first coordinate corresponds to time.  We let $\bb
M^{d_0\cross d_1}$ denote the set of $d_0\cross d_1$ matrices and
$\|\cdot\|$ denotes the operator norm on matrices which is compatible
with the Euclidean norm.  We abbreviate $\bb M^{d \cross d}$ to $\bb
M^{d}$ and let $I^d \subset \bb M^{d}$ denote the identity matrix.  We
let $B^d_r(x) \subset \bb R^d$ denote the open ball of radius $r$
centered at $x$ and $\overline B^d_r(x)$ denotes the closed ball.  We
let $S^d_+ \subset \bb M^d$ denote the symmetric, nonnegative-definite
matrices, we write $A \geq B$ if $A - B \in S^d_+$, and we let
$S^d_\mu \subset S^d_+$ denote the matrices whose eigenvalues are
contained in the interval $[1/\mu, \mu]$ when $\mu \geq 1$.

We let $C_K(\bb R^d)$ denote the continuous functions with compact
support, we set $\bb R_+ = [0,\infty)$, and we let $C^n(\bb R_+\cross
\bb R^d)$ denotes the class of functions that possess $n$ continuous
derivatives in $(0,\infty)\cross \bb R^d$, each of which admits a
continuous extension to $\bb R_+\cross \bb R^d$.  If $\alpha$ is a
multiindex, then $D^\alpha f$ denote the partial derivative
corresponding to $\alpha$.  If no multiindex is given, then $Df$
denote the gradient of a scalar function and the Jacobian matrix of a
vector-valued function, and $D^2f$ denotes the Hessian of a scalar
function.  If the components of $\bb R^d$ have been partitioned as $x
= (y,z)$, then $D_y$ denotes the gradient or Jacobian matrix
restricted to the components in $y$.  We will also use
$\dxx{x^i}{x^j}$, $\partial_s$, and $\partial_{ij}$ to denote partial
derivatives, but we will never use subscripts.

We let $X$ denote the canonical process on $C(\bb R_+; \bb R^{d})$,
and we equip this space with the locally uniform topology.  We let
$\scr C^d$ denote the Borel $\sigma$-field on $C(\bb R_+; \bb R^{d})$,
we set $\scr C_t^d = \sigma(X_s : s \leq t)$, and we set $\bb C^d =
(\scr C_t^d)_{t\geq 0}$.  The filtration $\bb C^d$ does not satisfy
the usual conditions of right-continuity and completeness, but this
will not cause any problems in what follows.  If $Y$ is an $\bb
R^d$-valued process, $f : \bb R_+ \cross \bb R^d \rightarrow \bb R^r$
is measurable, and $B \in \bb M^{r\cross d}$, then $f(Y)$ denotes the
process $t \mapsto f(t,Y_t)$ and $BY$ denotes the process $t \mapsto B
Y_t$.  The following definition is convenient when dealing with
processes whose covariance is degenerate.

\def \tsum{{\textstyle\sum}}
\begin{definition}
  Let $d_0 \leq d$, let $(\Omega, \scr F, \bb F = (\scr F_t)_{t \geq
    0}, \bb P)$ be a filtered probability space, let $a : \bb R_+
  \cross \Omega \rightarrow S^{d_0}_+$ and $b : \bb R_+ \cross \Omega
  \rightarrow \bb R^d$ be $\bb F$-progressive processes, and let
  $\Lhat^{a,b}$ denote the stochastic operator
  \begin{equation*}
    \Lhat^{a,b} f(s,x)
    \defeq \partial_s f(s,x)
	   + \frac{1}{2} \sum_{i,j=1}^{d_0} a^{ij}_s \, \partial_{ij} f(s,x)
	   + \sum_{i=1}^d b^i_s \, \partial_i f(s,x).
  \end{equation*}
  We say that a continuous, $\bb F$-adapted, $\bb R^d$-valued process
  $Y$ is a solution to the $(a,b)$-martingale problem starting at
  $(s,x) \in \bb R_+ \cross \bb R^d$ if $\bb P(Y_t = x, \; \forall t
  \leq s) = 1$, and the process $M_t = f(t, Y_t) - \int_s^t
  \Lhat^{a,b}f(u, Y_u) \, \diff u$ is a martingale on $[s,\infty)$ for
  each $f \in C^\infty_K(\bb R_+\cross \bb R^d)$.

  If the processes $a$ and $b$ are defined on $(C(\bb R_+; \bb R^d),
  \scr C^d, \bb C^d)$, then we say that a probability measure $\bb P$
  on $C(\bb R_+; \bb R^d)$ is a solution to the martingale problem if
  the canonical process is a solution to the martingale problem under
  the measure $\bb P$, and we say that the $(a,b)$-martingale problem
  is well-posed if there exists a unique measure which solves the
  $(a,b)$-martingale problem for each initial condition.
\end{definition}

\def \oh#1#2{O} 

We now give a brief description of the geometric setting in which we
shall be working.  The reader may consult \cite{lanconelli1994cheo} or
\cite{bramanti1996les} for a more thorough discussion.  Fix some $d
\geq 1$ and let $B \in \bb M^{d}$ denote a matrix which takes the
following lower-triangular, block form
\begin{equation*}
  B =
  \begin{bmatrix}
     0 & 0 & \dots	& 0 & 0 \\
     B_1     & 0 & \dots	& 0 & 0 \\
     0 & B_2     & \dots	& 0 & 0 \\
     \vdots  & \vdots  & \ddots & \vdots      & \vdots	\\
     0 & 0 & \dots	& B_n	      & 0 \\
  \end{bmatrix},
\end{equation*}
where $( d_0, \dots, d_n )$ is a nonincreasing sequence with $\sum_i
d_i = d$, $B_i\subset \bb M^{d_{i} \cross d_{i-1}}$ is a matrix of
rank $d_i$, and $0$ denotes a matrix of zeros whose dimensions may
vary with each appearance.  It follows from this block structure that
$B^i = 0$ when $i > n$.

Once a matrix $B$ has been fixed, we define the following
binary operation on $\bb R\cross \bb R^d$:
\begin{equation*}
  (s,x) \op (t,y) \defeq (s+t, e^{tB} x + y).
\end{equation*}
It is easy to check that $(\bb R^{1+d}, \,\op\,)$ is a group with
identity element $(0,0)$ whose inverse operation is given by
\begin{equation*}
  (s,x)\inv = (-s, -e^{-tB} x).
\end{equation*}
The reader can also check that the operator $\partial_s + L^{a,B}$ is
left-translation invariant with respect to this group when $L^{a,B}$
is defined as in~\eqref{eq:1.4} and $a$ is constant.

Now let $\dilbar_\lambda \in \bb M^{1+d}$ denote the diagonal matrix
\begin{equation*}
  \begin{bmatrix}
    \lambda^2 & 0 & 0 & \dots & 0 \\
    0 & \lambda I^{d_0} & 0 & \dots & 0 \\
    0 & 0 & \lambda^3 I^{d_1} & \dots & 0 \\
    \vdots & \vdots & \vdots & \ddots & \vdots \\
    0 & 0 & 0 & \dots & \lambda^{2n+1} I^{d_n} \\
  \end{bmatrix},
\end{equation*}
and let $\dil_\lambda \in \bb M^{d}$ denote the matrix obtained by
removing the first row and column from $\dilbar_\lambda$.  It follows
easily from the block structure of $B$ that $\dil_\lambda B^i
\dil_\lambda\inv = \lambda^{2i} B^i$.  As $B$ is nilpotent, we have
\begin{equation*} 
  \dil_\lambda e^{tB} \dil_\lambda\inv = e^{\lambda^2 t B}.
\end{equation*}
One can then check that each dilation $\dilbar_\lambda$ is an
automorphism of the group $(\bb R^{1+d}, \,\op\,)$, so the collection
$(\bb R^{1+d}, \,\op\,, \dilbar)$ forms a homogeneous group in the
sense of Definition XIII.5.2 in \cite{stein1993harm}.  We then set
$\dbar = 2 + d_0 + 3 d_1 + \dots + (2n+1) d_n$, so $\det \dilbar_\lambda
= \lambda^{\dbar}$ and $\dbar$ gives the homogeneous dimension of the
group $(\bb R^{1+d}, \,\op\,, \dilbar)$.

Finally, let
\begin{equation*}
  \rho(\xbar)
  = \inf\bigl\{ \lambda > 0 : |\dilbar_\lambda\inv \xbar| \leq 1 \bigr\},
  \qquad \xbar \in \bb R^{1+d},
\end{equation*}
denote the homogeneous norm associated with these dilations and
observe that $\rho(\dilbar_\lambda \xbar) = \lambda \rho(\xbar)$ for
all $\xbar \in \bb R^{1+d}$.  Moreover, $\dilbar_\lambda \leq \lambda
I^{1+d}$ when $\lambda \leq 1$ and $\lambda I^{1+d} \leq
\dilbar_\lambda$ when $\lambda \geq 1$, so it follows immediately that
$|\xbar| \leq \rho(\xbar)$ when $|\xbar| \leq 1$ and $\rho(\xbar) \leq
|\xbar|$ when $|\xbar| \geq 1$.

In the following sections, we will always assume that $B$ is a matrix
which satisfies the structural conditions given above.  We also
observe that the matrix $B$ fully determines the constants $d$,
$\dbar$, $n$, and $(d_0, \dots, d_n)$, the binary operation $\circ$,
and the matrix $\dilbar_\lambda$.

\section{Initial Estimates for a Transition Function} \label{sec:3}

We now begin to study the $(c, BX)$-martingale problem on $C(\bb
R_+; \bb R^d)$ when $c : \bb R_+ \rightarrow S^{d_0}_\mu$ for some
$\mu \geq 1$ and $B$ satisfies the structural conditions given in
Section~\ref{sec:2}.  Let $\sigma(t) = \sqrt c(t)$ denote the
symmetric, positive-definite square root of $c(t)$, and consider the
vector-valued SDE
\begin{equation} \label{eq:3.1}
  \left\{
  \begin{alignedat}{2}
    \diff Y_t(s,x) &= BY_t \, \diff t + \begin{bmatrix}
                                          \sigma(t) \\
                                          0
                                        \end{bmatrix} \diff W_t,
                   &\quad &t > s,
     \\
     Y_t(s,x) &= x, &\qquad &t \leq s,
  \end{alignedat}
  \right.
\end{equation}
where $W$ is a $d_0$-dimensional Brownian motion and $0 \in \bb
M^{(d-d_0)\cross d_0}$.  A solution to this equation is given by
\begin{equation*}
  Y_t(s,x) = e^{(t-s)^+ B} x
	     + \int_s^{s \vee t} e^{(t-u)B} \begin{bmatrix}
					      \sigma(u) \\
					      0
					    \end{bmatrix} \, dW_u.
\end{equation*}
The coefficients in equation~\eqref{eq:3.1} are Lipschitz continuous
in space, so this solution is both pathwise unique and unique in
law. In particular, if we let $\bb P_{s,x}$ denote the law of the
process $Y(s,x)$, then we see that $\bb P_{s,x}$ is the unique
solution to the $(c, BX)$-martingale problem starting at $(s,x)$, and
the collection of measures $\{\bb P_{s,x}\}_{(s,x)\in \bb R_+\cross \bb
  R^d}$ form a strong Markov family on $(C(\bb R_+; \bb R^d), \bb
C^d)$.  The transition function associated with this strong Markov
family is given by
\begin{equation} \label{eq:3.2}
 \begin{split}
  p_{c,B}(s, x; t, y)
  &= \indd{t>s} \, (2 \pi)^{-d/2} \, \bigl\{\det C_{c,B}(s,t)\bigr\}^{-1/2}
     \\ &\qquad
     \times \exp\Bigl\{ -\frac{1}{2}
			\bigl\langle C_{c,B}\inv(s,t) (y - e^{(t-s)B}x),\;
				     y - e^{(t-s)B}x \bigr\rangle
		 \Bigr\},
  \end{split}
\end{equation}
where
\begin{equation} \label{eq:3.3}
  C_{c,B}(s,t) = \int_{s}^{s \maxx t} e^{(t-u)B}
		\begin{bmatrix}
		  c(u) & 0 \\
		  0 & 0
		\end{bmatrix} e^{(t-u)B^T} \, du.
\end{equation}

We will now study this transition function using analytic tools.
This will be more pleasant if the transition function is defined on
the entire time line, so we will now assume that $c : \bb R
\rightarrow S^{d_0}_\mu$ and we will define $p_{c,B}(s,x;t,y)$ for all
$(s,x;t,y) \in \bb R^{2(1+d)}$.  In Lemma~\ref{thm:3.2}, we will
see that $C_{c,B}(s,t)$ is invertible when $t > s$, so $p_{c,B}$ is
well-defined. The fact that $C_{c,B}(s,t)$ is invertible for all $s >
t$ reflects the fact that the backward equation associated with the
process $Y(s,x)$ is hypoelliptic when $c$ is constant.  We now define
the Green's operator associated with this transition function:
\begin{equation*}
  G_{c,B} f(\bar x) = \int_{\bb R^{1+d}} p_{c,B}(\bar x; \bar y) f(\bar y)
					 \, \diff \bar y,
  \qquad \xbar \in \bb R^{1+d}, f \in C_K(\bb R^{1+d}).
\end{equation*}
We will also need the operator
\begin{align*}
  L^{c,B} u(s,x)
  &= \frac{1}{2} \sum_{i,j=1}^{d_0} c^{ij}(s)
			     \partial_{ij}u(s,x)
     + \bigl\langle Bx, D_x u(s,x) \bigr\rangle.
\end{align*}
If we apply the operator $L^{c,B}$ to the transition function
$p_{c,B}(\xbar; \ybar)$, then we will add a subscript to indicate
which set of coordinates the operator should act upon.

We first give a number of scaling properties possessed by the
transition function.  These properties follow easily from the fact
that $e^{\lambda t B} = \dil_\lambda^{1/2} e^{t B}
\dil_\lambda^{-1/2}$ and $\det \dil_\lambda = \lambda^{\dbar-2}$, so
we leave their verification to the reader.
\begin{lemma} \label{thm:3.1} Let $c : \bb R \rightarrow \Smu$ be a
  measurable function, let $\xbar = (s,x) \in \bb R^{1+d}$, $\ybar =
  (t,y) \in \bb R^{1+d}$, $\zbar = (u,z) \in \bb R^{1+d}$, let
  $\lambda > 0$, and set $c_1(\tau) = c(u + \tau)$ and $c_2 (\tau) =
  c(\lambda^2 \tau)$.  Then
  \begin{align}
    \label{eq:3.4}
    p_{c,B}(\zbar\op\xbar; \; \zbar\op\ybar)
    &= p_{c_1}(\xbar; \ybar),
    \\
    \label{eq:3.5} 
    C_{c,B}(\lambda^2 s, \lambda^2 t)
      &= \dil_\lambda C_{c_2} (s, t) \dil_\lambda,
    \\
    \label{eq:3.6} 
    p_{c,B}(\dilbar_\lambda \xbar; \dilbar_\lambda \ybar)
      &= \lambda^{2-\dbar} p_{c_2}(\xbar; \ybar).
  \end{align}
\end{lemma}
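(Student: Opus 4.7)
The plan is to verify each identity by direct computation from the definitions \eqref{eq:3.2} and \eqref{eq:3.3}. Beyond the conjugation identity $e^{\lambda^2 tB} = \dil_\lambda e^{tB} \dil_\lambda\inv$ recorded in Section~\ref{sec:2}, the only ingredients I need are (i) $\dil_\lambda$ is symmetric with $\det \dil_\lambda = \lambda^{\dbar - 2}$, and (ii) the upper-left $d_0 \cross d_0$ block of $\dil_\lambda$ equals $\lambda I^{d_0}$, so that $\dil_\lambda\inv \bigl[\begin{smallmatrix} c & 0 \\ 0 & 0 \end{smallmatrix}\bigr] \dil_\lambda\inv = \lambda^{-2} \bigl[\begin{smallmatrix} c & 0 \\ 0 & 0 \end{smallmatrix}\bigr]$ for any $d_0 \cross d_0$ matrix $c$.

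For \eqref{eq:3.4}, I unpack $\zbar \op \xbar = (u+s,\, e^{sB}z + x)$ and $\zbar \op \ybar = (u+t,\, e^{tB}z + y)$. The time-separation remains $t - s$, and the centering vector becomes $e^{tB}z + y - e^{(t-s)B}(e^{sB}z + x) = y - e^{(t-s)B}x$; simultaneously, the substitution $\tau = u + \tau'$ in \eqref{eq:3.3} turns $C_{c,B}(u+s,\, u+t)$ into $C_{c_1,B}(s,t)$. Hence every factor of \eqref{eq:3.2} agrees with the corresponding factor of $p_{c_1}(\xbar; \ybar)$.

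For \eqref{eq:3.5}, the substitution $\tau = \lambda^2 \tau'$ in \eqref{eq:3.3} contributes a Jacobian $\lambda^2$ and replaces $c$ by $c_2$; conjugating each exponential $e^{\lambda^2(t-\tau')B}$ via (i) and pulling the two resulting $\dil_\lambda\inv$'s through the block matrix via (ii) contributes a factor $\lambda^{-2}$ that exactly cancels the Jacobian, leaving $\dil_\lambda C_{c_2,B}(s,t) \dil_\lambda$. For \eqref{eq:3.6} I combine this with the identity $\dil_\lambda y - e^{\lambda^2(t-s)B} \dil_\lambda x = \dil_\lambda \bigl(y - e^{(t-s)B} x\bigr)$: using the symmetry of $\dil_\lambda$, the quadratic form in \eqref{eq:3.2} reduces to $\langle C_{c_2,B}\inv(s,t) w, w \rangle$ with $w = y - e^{(t-s)B}x$, while the normalizing constant picks up $(\det \dil_\lambda)\inv = \lambda^{2-\dbar}$, producing the announced prefactor. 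No part of this is genuinely difficult; the only bookkeeping that requires a moment of care is verifying that the $\lambda$-exponents from the three sources---conjugation by $\dil_\lambda$, the block embedding $c \mapsto \bigl[\begin{smallmatrix} c & 0 \\ 0 & 0 \end{smallmatrix}\bigr]$, and the Jacobian of the time-substitution---combine correctly to yield the advertised power $\lambda^{2-\dbar}$, which is presumably why the author defers the verification to the reader.
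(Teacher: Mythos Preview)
Your proposal is correct and is precisely the direct verification the paper has in mind: the author explicitly states that the properties ``follow easily from the fact that $e^{\lambda t B} = \dil_\lambda^{1/2} e^{tB} \dil_\lambda^{-1/2}$ and $\det \dil_\lambda = \lambda^{\dbar-2}$'' and leaves the computation to the reader. Your bookkeeping of the cancellation between the time-substitution Jacobian $\lambda^2$ and the factor $\lambda^{-2}$ coming from $\dil_\lambda\inv \bigl[\begin{smallmatrix} c & 0 \\ 0 & 0 \end{smallmatrix}\bigr]\dil_\lambda\inv$ is exactly the point that needs checking, and you have it right.
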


We will soon need bounds on the transition function which only depend
upon $c$ through $\mu$, so we will now develop some lemmas for uniformly
dominating the transition function.

\begin{lemma} \label{thm:3.2} Let $c : \bb R \rightarrow \Smu$ be
  measurable.  Then $C_{c,B}(s,t)$ is invertible when $t > s$ and there
  exist polynomials $P$ and $Q$ with positive coefficients such that
  $\|C_{c,B}(s,t)\| \leq P(t-s)$ and $\|C\inv_{c,B}(s,t)\| \leq Q(1/\{t-s\})$
  when $t > s$.	 Moreover, the coefficients of the polynomials $P$ and
  $Q$ only depend upon $B$ and $\mu$.
\end{lemma}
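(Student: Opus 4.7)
The plan is to prove the upper and lower bounds separately, using different techniques. For the upper bound on $\|C_{c,B}(s,t)\|$ I would argue directly: since $B$ is nilpotent with $B^{n+1} = 0$, the exponential $e^{\tau B} = \sum_{k=0}^n \tau^k B^k / k!$ is a matrix-valued polynomial in $\tau$ of degree $n$ with operator norm controlled by a polynomial in $\tau$ whose coefficients depend only on $B$. Combined with $\|c(u)\| \leq \mu$, this gives
\begin{equation*}
  \|C_{c,B}(s,t)\| \leq \mu \int_s^t \|e^{(t-u)B}\|^2 \, du,
\end{equation*}
which is at most a polynomial $P(t-s)$ of degree $2n+1$ in $t-s$ with positive coefficients depending only on $B$ and $\mu$.

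The lower bound (invertibility and bound on $\|C_{c,B}^{-1}\|$) is the real work. First I would reduce to the case $s=0$, $t=1$ via the translation substitution $u = s+v$, which gives $C_{c,B}(s,t) = C_{c_s,B}(0, t-s)$ with $c_s(v) = c(s+v)$, and then via the scaling identity~\eqref{eq:3.5} applied with $\lambda = \sqrt{t-s}$ to get $C_{c,B}(s,t) = \dil_\lambda \, C_{\tilde c, B}(0,1) \, \dil_\lambda$ for some $\tilde c : \bb R \to \Smu$. Since $\|\dil_\lambda^{-1}\| \leq \max(\lambda^{-1}, \lambda^{-(2n+1)})$, a uniform bound $\|C_{\tilde c,B}(0,1)^{-1}\| \leq K$ over all $\tilde c : \bb R \to \Smu$ will yield the desired bound $\|C_{c,B}^{-1}(s,t)\| \leq K [(t-s)^{-1} + (t-s)^{-(2n+1)}]$.

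To get the uniform bound on $[0,1]$, I would show the quadratic form estimate
\begin{equation*}
  \langle C_{c,B}(0,1) v, v \rangle
  = \int_0^1 \langle c(u) \Pi e^{(1-u)B^T} v, \; \Pi e^{(1-u)B^T} v \rangle \, du
  \geq \frac{1}{\mu} \int_0^1 |\Pi e^{(1-u)B^T} v|^2 \, du,
\end{equation*}
where $\Pi : \bb R^d \to \bb R^{d_0}$ is projection onto the first $d_0$ coordinates. The key step, which I expect to be the main obstacle, is showing that the pair $(B^T, \Pi)$ is observable in the sense that
\begin{equation*}
  \kappa \defeq \inf_{|v|=1} \int_0^1 |\Pi e^{(1-u)B^T} v|^2 \, du > 0,
\end{equation*}
with $\kappa$ depending only on $B$. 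By analyticity in $u$ and compactness of the unit sphere, it suffices to show that $\Pi e^{\tau B^T} v \equiv 0$ forces $v = 0$. Expanding in Taylor series, this amounts to showing $\Pi (B^T)^k v = 0$ for all $k \geq 0$ forces $v = 0$. Writing $v = (v_0, v_1, \dots, v_n)$ according to the block decomposition of $B$, a direct calculation using the lower-triangular block form yields $\Pi (B^T)^k v = B_1^T B_2^T \cdots B_k^T v_k$ for $0 \leq k \leq n$. Since each $B_i$ has full row rank $d_i \leq d_{i-1}$, each $B_i^T$ is injective, so the composition is injective, and inductively we conclude $v_k = 0$ for all $k$. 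Combining this observability bound with the reduction above gives the desired polynomial bound on $\|C_{c,B}^{-1}(s,t)\|$.
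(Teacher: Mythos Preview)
Your proposal is correct and follows essentially the same approach as the paper. The paper introduces the intermediate Gramian $\widehat C(\tau) = \int_0^\tau e^{(\tau-u)B} \bigl[\begin{smallmatrix} I^{d_0} & 0 \\ 0 & 0\end{smallmatrix}\bigr] e^{(\tau-u)B^T}\,du$ and uses the sandwich $\widehat C(t-s)/\mu \leq C_{c,B}(s,t) \leq \mu\,\widehat C(t-s)$ together with the scaling $\widehat C(t) = \dil_t^{1/2}\widehat C(1)\dil_t^{1/2}$, whereas you apply the eigenvalue bound on $c$ directly inside the quadratic form and invoke~\eqref{eq:3.5}; these are the same argument organized slightly differently, and your block computation $\Pi (B^T)^k v = B_1^T\cdots B_k^T v_k$ is exactly the observability step the paper cites from Lanconelli--Polidoro.
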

\begin{proof}
  Set $A = \begin{bmatrix} I^{d_0} & 0 \\ 0 & 0 \end{bmatrix} \in \bb
  M^d$ and define
  \begin{equation*}
    \Chat(\tau) \defeq \int_0^{0 \maxx \tau}
      e^{(\tau-u)B} A e^{(\tau-u)B^T} \, du.
  \end{equation*}
  Then $\Chat(\tau)$ is
  strictly positive definite when $\tau > 0$.  This follows from the
  fact that $A (B^T)^i x = 0$ for all $i \geq 0$ if and only if $x =
  0$.  The reader may consult Proposition~A.1 of
  \cite{lanconelli1994cheo} for the details of this argument.  It
  follows from the definition of $C_{c,B}(s,t)$ that $\Chat(t-s) / \mu
  \leq C_{c,B}(s,t) \leq \mu \Chat(t-s)$ with respect to the natural
  partial ordering on symmetric matrices.  As a result, we see that
  $C_{c,B}(s,t)$ is invertible and $C_{c,B}\inv(s,t) \leq \mu
  \Chat\inv(t-s)$ when $t > s$.  Using the fact that $\dil_\lambda
  e^{tB} \dil_\lambda\inv = e^{\lambda^2 t B}$, one can easily check
  that $\Chat(t) = \dil_{t}^{1/2} \Chat(1) \dil_{t}^{1/2}$.  Finally,
  we recall that the Euclidean operator norm of a symmetric,
  positive-definite matrix is equal to the largest eigenvalue of the
  matrix.  Combining these observations, we see that
  \begin{align*}
    \|C_{c,B}(s,t)\| \leq \mu \|\Chat(t-s)\|
    &\leq \mu \|\Chat(1)\| \| \dil_{(t-s)}^{1/2} \|^2
    \\
    &\leq \mu \|\Chat(1)\| \bigl\{(t-s) + (t-s)^{2n+1}\bigr\},
  \end{align*}
  where $n$ denote the constant that appears in Section~\ref{sec:2}.
  As $\|\Chat(1)\|$ and $n$ are fully determined by $B$, we have
  produced the polynomial $P$.	Arguing in the same way, we see that
  \begin{align*}
    \|C\inv_{c,B}(s,t)\|
    &\leq \mu \|\Chat\inv(1)\| \| \dil_{(t-s)}^{-1/2} \|^2
    \\
    &\leq \mu \|\Chat\inv(1)\| \bigl\{(t-s)\inv + (t-s)^{-(2n+1)}\bigr\}.
  \end{align*}
  This gives the polynomial $Q$ and completes the proof.
\end{proof}

\begin{lemma} \label{thm:3.3}
  Let $0 < a \leq b$ and let $c : \bb R \rightarrow \Smu$ be measurable.
  Then there exist constants $N = N(B,\mu)$ and $\varepsilon =
  \varepsilon(B,\mu) > 0$ such that $ p_{c,B}(s,x; t,y) \leq N \exp
  \bigl\{ N |x| |y| - \varepsilon (|x|^2 + |y|^2) \bigr\} $ when $t-s
  \in [a,b]$.
\end{lemma}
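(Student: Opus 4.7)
The plan is to work directly from the explicit Gaussian formula~\eqref{eq:3.2}, treating the determinantal prefactor and the quadratic exponent separately. For the prefactor, the two-sided comparison $\mu\inv\Chat(t-s) \leq C_{c,B}(s,t) \leq \mu\Chat(t-s)$ from Lemma~\ref{thm:3.2} combines with the scaling identity $\Chat(\tau) = \dil_{\tau^{1/2}} \Chat(1) \dil_{\tau^{1/2}}$ (so $\det\Chat(\tau) = \tau^{\dbar-2}\det\Chat(1)$) to furnish a uniform bound $(2\pi)^{-d/2}(\det C_{c,B}(s,t))^{-1/2} \leq N_1$ whenever $t-s \in [a,b]$, where $N_1$ depends only on $B$, $\mu$, and $a$.

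For the exponent, write $A = e^{(t-s)B}$. The upper comparison $C_{c,B}(s,t) \leq \mu \Chat(t-s)$ yields $C_{c,B}\inv(s,t) \geq (\mu\|\Chat(t-s)\|)\inv I^d$, and since $\tau \mapsto \|\Chat(\tau)\|$ is continuous on $[a,b]$ there exists $\eta = \eta(B,\mu,b) > 0$ with $\half \la C_{c,B}\inv(s,t) z, z\ra \geq \eta|z|^2$ for every $z \in \bb R^d$; hence the Gaussian exponent is dominated by $-\eta|y-Ax|^2$. The crucial geometric input is that $B$ is strictly lower block-triangular and hence nilpotent, so $\tr B = 0$, $\det A = 1$, and the singular values of $A$ satisfy $\sigma_{\min}(A) \leq 1 \leq \|A\|$. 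Setting $\alpha_0 \defeq \inf_{\tau\in[a,b]} \sigma_{\min}(e^{\tau B}) \in (0,1]$ and $\beta_0 \defeq \sup_{\tau\in[a,b]} \|e^{\tau B}\| \in [1,\infty)$, the Cauchy--Schwarz inequality gives
\begin{equation*}
  |y - Ax|^2 = |y|^2 - 2\la y, Ax\ra + |Ax|^2 \geq |y|^2 - 2\beta_0|x||y| + \alpha_0^2|x|^2.
\end{equation*}

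Choosing $\varepsilon \defeq \eta\alpha_0^2$ and $N \defeq \max(2\eta\beta_0, N_1)$, each coefficient in the quadratic form $(N - 2\eta\beta_0)|x||y| + (\eta - \varepsilon)|y|^2 + (\eta\alpha_0^2 - \varepsilon)|x|^2$ is nonnegative (the relation $\alpha_0 \leq 1$ is used to force $\varepsilon \leq \eta$), so $-\eta|y-Ax|^2 \leq N|x||y| - \varepsilon(|x|^2 + |y|^2)$; combining with the prefactor estimate closes the proof. The only nontrivial point is the use of $\det e^{\tau B} = 1$ to guarantee $\alpha_0 \leq 1$---without this observation, a single $\varepsilon$ cannot simultaneously dominate the $|x|^2$ and $|y|^2$ coefficients. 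Everything else reduces to Lemma~\ref{thm:3.2} and the elementary scaling properties of $\dil_\lambda$.
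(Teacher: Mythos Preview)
Your proof is correct and follows essentially the same route as the paper's: bound the determinantal prefactor via the comparison $C_{c,B}(s,t) \geq \mu\inv\Chat(t-s)$, extract a uniform lower eigenvalue for $C_{c,B}\inv(s,t)$, and expand the quadratic form in $y - e^{(t-s)B}x$. The paper expands $\langle C_{c,B}\inv z, z\rangle$ directly, using the \emph{upper} bound $C_{c,B}\inv \leq \mu\Chat\inv(a)$ to control the cross term $\langle C_{c,B}\inv y, Ax\rangle$, whereas you first reduce to $\eta|y-Ax|^2$ and then apply Cauchy--Schwarz; your route is marginally cleaner since it needs only one side of the spectral comparison.

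One small remark: your claim that the identity $\det e^{\tau B}=1$ is essential is an overstatement. The paper simply sets $\varepsilon = \lambda_1(1\minn\delta^2)/(2\mu)$ with $\delta = \inf_{\tau\in[a,b]}\sigma_{\min}(e^{\tau B})$, i.e.\ takes the minimum with $1$ by hand, which handles both coefficients regardless of whether $\alpha_0 \leq 1$. Your observation that nilpotency forces $\alpha_0 \leq 1$ is correct and tidy, but the argument survives without it.
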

\begin{proof}
  Let $\Chat$ be defined as in Lemma~\ref{thm:3.2}, let $\lambda_1 >
  0$ denote the smallest eigenvalue of $\Chat\inv(b)$, and set $\delta
  = \inf \{ |e^{s B} x| : s \in [a,b], \, x \in \bb R^d, \, |x| = 1
  \}$.	As this infimum is achieved and $e^{sB}$ is invertible,
  we have $\delta > 0$.	 When $t-s \in [a,b]$, we have
  $\Chat\inv(b) / \mu \leq
  C_{c,B}\inv(s,t) \leq \mu \Chat\inv(a)$ and
  \begin{align*}
    \lefteqn{\langle C_{c,B}\inv(s,t) (y - e^{(t-s)B}x), \,
				      y - e^{(t-s)B}x \rangle}
    \qquad \qquad \\
    &\geq \lambda_1 |y|^2 / \mu + \lambda_1 \delta^2 |x|^2 / \mu
	  - 2 \mu \|\Chat\inv(a)\| |y| e^{(b-a)\|B\|} |x|.
  \end{align*}
  In particular, if we set $N_1 = \mu \|\Chat\inv(a)\| e^{(b-a)\|B\|}$
  and $\varepsilon = \lambda_1(1 \minn \delta^2) / (2\mu)$,
  then we have
  \begin{align*}
    p_{c,B}(s,x;t,y)
    &\leq (2\mu\pi)^{d/2} \{ \det \Chat(a)\}^{-1/2}
	  \exp\{ N_1 |x| |y| - \varepsilon (|x|^2 + |y|^2)\},
  \end{align*}
  when $t-s \in [a,b]$.
\end{proof}

\begin{lemma} \label{thm:3.4} %
  Let $c : \bb R \rightarrow \Smu$ be measurable and let $\alpha,
  \beta \in (\{0\}\cup \bb N)^d$ be multiindices.  Then there exists a
  polynomial $P$ in four variables with positive coefficients such
  that
  \begin{equation*}
    |D^\alpha_x D^\beta_y p_{c,B}(s,x;t,y)|
      \leq P(t-s, 1/(t-s), |x|, |y| \bigr)
	   \; p_{c,B}(s,x;t,y),
  \end{equation*}
  when $t > s$.  If we further assume that $c \in C^\infty(\bb R;
  \Smu)$, then $\partial_s p_{c,B}(s,x;t,y) + L^{c,B}_{\xbar}
  p_{c,B}(s,x;t,y) = 0$ when $s < t$, and we may find polynomials $Q$
  and $R$ of the same form as $P$ such that
  \begin{align*}
    |\partial_s D^\alpha_x D^\beta_y \, p_{c,B}(s,x;t,y)|
      &\leq  Q(t-s, 1/(t-s), |x|, |y| \bigr)
	   \; p_{c,B}(s,x;t,y),
    \\
    |\partial_t D^\alpha_x D^\beta_y \, p_{c,B}(s,x;t,y)|
      &\leq  R(t-s, 1/(t-s), |x|, |y| \bigr)
	   \; p_{c,B}(s,x;t,y),
  \end{align*}
  when $t > s$.
  Moreover, the coefficients of $P$, $Q$, and $R$ may be chosen so
  that they only depend upon $B$, $\mu$, $\alpha$ and $\beta$.

\end{lemma}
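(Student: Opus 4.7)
The approach is to differentiate the explicit Gaussian formula~\eqref{eq:3.2} for $p_{c,B}$ and to bound every factor that appears. Writing
\begin{equation*}
  p_{c,B}(s,x;t,y) = (2\pi)^{-d/2}\bigl\{\det C_{c,B}(s,t)\bigr\}^{-1/2} \exp\bigl\{\Phi(s,x;t,y)\bigr\},
\end{equation*}
with $\Phi(s,x;t,y) = -\tfrac{1}{2}\langle C_{c,B}^{-1}(s,t) z,\, z\rangle$ and $z(s,x;t,y) = y - e^{(t-s)B}x$, all $x$- and $y$-dependence of $p_{c,B}$ enters through $z$.

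For the spatial bound, $\partial_{x^i}\Phi$ and $\partial_{y^i}\Phi$ are linear in the components of $z$ with coefficients built from entries of $C_{c,B}^{-1}(s,t)$ and $e^{(t-s)B}$, so a straightforward induction on $|\alpha|+|\beta|$ shows that $D^\alpha_x D^\beta_y p_{c,B}$ equals $p_{c,B}$ times a polynomial $\pi_{\alpha,\beta}$ in these entries and in the components of $z$, whose degree and coefficients depend only on $\alpha$, $\beta$, and $d$. By Lemma~\ref{thm:3.2}, $\|C_{c,B}^{-1}(s,t)\|$ is bounded by a polynomial in $1/(t-s)$ with coefficients depending only on $B$ and $\mu$. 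Since $B^{n+1}=0$, one has $\|e^{(t-s)B}\|\leq \sum_{k=0}^{n} (t-s)^k \|B\|^k/k!$, which is polynomial in $(t-s)$, and $|z|\leq |y|+\|e^{(t-s)B}\||x|$. Bounding each factor in $\pi_{\alpha,\beta}$ this way assembles the polynomial $P$ in $(t-s)$, $1/(t-s)$, $|x|$, $|y|$, with coefficients depending only on $B$, $\mu$, $\alpha$, $\beta$.

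For the time derivatives, I would work under the additional hypothesis $c\in C^\infty(\bb R;\Smu)$, which makes $C_{c,B}(s,t)$ jointly smooth on $\{t>s\}$ and justifies all differentiation under the explicit formula. The key step is to verify the backward equation $\partial_s p_{c,B}+L^{c,B}_{\xbar} p_{c,B}=0$ on $\{t>s\}$. Using
\begin{align*}
  \partial_s C_{c,B}(s,t) &= -e^{(t-s)B}\begin{bmatrix}c(s)&0\\0&0\end{bmatrix}e^{(t-s)B^T}, \\
  \partial_t C_{c,B}(s,t) &= \begin{bmatrix}c(t)&0\\0&0\end{bmatrix} + BC_{c,B}(s,t) + C_{c,B}(s,t)B^T,
\end{align*}
together with Jacobi's formula $\partial\det C=(\det C)\tr(C^{-1}\partial C)$ and $\partial C^{-1}=-C^{-1}(\partial C)C^{-1}$, the backward equation reduces to an algebraic identity one obtains by expanding both sides. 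Given this, $\partial_s D^\alpha_x D^\beta_y p_{c,B} = -D^\alpha_x D^\beta_y[L^{c,B}_{\xbar}p_{c,B}]$, and the spatial bound applied to derivatives of order at most $|\alpha|+|\beta|+2$ yields $Q$, with the term $\langle Bx,D_x\cdot\rangle$ contributing an extra factor of $|x|$. The bound on $\partial_t D^\alpha_x D^\beta_y p_{c,B}$ follows in the same way by differentiating the explicit Gaussian directly in $t$ and using $\partial_t C_{c,B}$ above, producing $R$. The main work is the matrix-algebra verification of the backward equation; the remainder is bookkeeping for derivatives of Gaussian densities.
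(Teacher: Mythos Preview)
Your proposal is correct and follows essentially the same route as the paper. The paper makes the building blocks explicit by setting $f_m = e^{m(t-s)B^T}C^{-1}_{c,B}(s,t)\,z$, $g_{mn} = e^{m(t-s)B^T}C^{-1}_{c,B}(s,t)\,e^{n(t-s)B}$, and writing $\partial_s p = h_1\,p$, $\partial_t p = h_0\,p$ for explicitly given scalars $h_0,h_1$; the induction then reduces everything to bounding $\|c\|$, $\|e^{(t-s)B}\|$, and $\|C^{-1}_{c,B}(s,t)\|$, exactly as you outline. The only organizational difference is that for the $\partial_s$-bound you invoke the backward equation to convert the time derivative into spatial derivatives of order at most $|\alpha|+|\beta|+2$ (plus the $\langle Bx,D_x\rangle$ commutator terms), whereas the paper bounds the explicit multiplier $h_1$ directly; since $h_1\,p = -L^{c,B}_{\xbar}p$, the two computations are the same in disguise.
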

\begin{proof}
  Define the vector-valued functions
  \begin{equation*}
      f_m(s,x;t,y) \defeq e^{m(t-s)B^T} C\inv_c(s,t) (y - e^{(t-s)B} x),
      \qquad m \in \{0,1\},
  \end{equation*}
  the matrix-valued functions
  \begin{equation*}
    g_{mn}(s,t) \defeq e^{m(t-s)B^T} C\inv_c(s,t) e^{n(t-s)B},
    \qquad m,n \in \{0,1\},
  \end{equation*}
  and the scalar functions
  \begin{align*}
    h_0(s,x;t,y) &\defeq
      \frac{1}{2} \sum_{i,j=1}^{d_0} c^{ij}(t) (f^i_0 f^j_0-g^{ij}_{00})
      + \langle By, f_0 \rangle,
    \\
    h_{1}(s,x;t,y) &\defeq
      \frac{1}{2} \sum_{i,j=1}^{d_0} c^{ij}(s) (g^{ij}_{11} - f^i_1 f^j_1)
      - \langle Bx, f_1\rangle,
  \end{align*}
  where the arguments of $f_m$ and $g_{mn}$ have been suppressed.  We
  then see that $D_y p_{c,B} = - f_0 \, p_{c,B}$, $D_x p_{c,B} = f_1
  \, p_{c,B}$, $D_y f_m = g_{m0}$, and $D_x f_m = - g_{m1}$ when $t >
  s$.  If $c$ is smooth, then we also have $\partial_t p_{c,B} = h_0
  \, p_{c,B}$ and $\partial_s p_{c,B} = h_1 \, p_{c,B}$.  Using these
  expressions, one can check directly that $\partial_s
  p_{c,B}(s,x;t,y) + L^{c,B}_{\xbar} p_{c,B}(s,x;t,y) = 0$ when $s <
  t$.

  An inductive argument using the product rule for differentiation
  shows that is enough to check that $f_n$, $g_{nm}$, and $h_n$ are
  dominated by polynomials of the required form for $n,m \in \{0,1\}$.
  Examining the expressions above, we then see that it is actually
  enough to bound $\|c(\tau)\|$, $\|e^{(t-s)B}\|$ and
  $\|C\inv_c(s,t)\|$ by polynomials of the desired form.  We have
  $\|c(\tau)\| \leq \mu$ for all $\tau$.  As the matrix $B$ is
  nilpotent, the expression $\|e^{(t-s)B}\|$ is bounded by a
  polynomial in the single variable $(t-s)$ whose coefficients are
  determined by $B$.  Finally, Lemma~\ref{thm:3.2} asserts that
  $\|C\inv_c(s,t)\|$ is bounded by a polynomial in the single variable
  $(t-s)\inv$ whose coefficients are determined by $B$ and $\mu$, so
  the proof is complete.
\end{proof}

We will need the following cancellation properties in the next section.
\begin{lemma} \label{thm:3.5} %
  Let $c : \bb R \rightarrow \Smu$ be measurable, let $1 \leq i,j \leq
  d$, and let $s < t$.	Then
    $\int_{\bb R^d} \dxx{x^i}{x^j}{p_{c,B}}(s,x;t,y) \, dx = 0$
  and
    $\int_{\bb R^d} \dxx{x^i}{x^j}{p_{c,B}}(s,x;t,y) \, dy = 0$.
\end{lemma}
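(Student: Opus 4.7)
The plan is to establish both identities by reducing them to the facts that, for $t>s$, $p_{c,B}(s,x;t,\cdot)$ integrates to $1$ on $\bb R^d$ and that the first-order $x$-derivatives of $p_{c,B}$ decay to zero as $|x|\to\infty$. The uniform polynomial-times-Gaussian bounds supplied by Lemmas~\ref{thm:3.3} and~\ref{thm:3.4} provide the integrable dominating functions needed to justify the requisite exchanges of limit, derivative, and integral; this bookkeeping is really the only source of any difficulty.

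For the $y$-identity, formula~\eqref{eq:3.2} realizes $p_{c,B}(s,x;t,\cdot)$ as the density of the Gaussian law with mean $e^{(t-s)B}x$ and invertible covariance $C_{c,B}(s,t)$ (cf.\ Lemma~\ref{thm:3.2}), so $\int_{\bb R^d} p_{c,B}(s,x;t,y)\,\diff y = 1$. It then suffices to move the derivative $\partial_{x^i x^j}$ inside the integral. Lemma~\ref{thm:3.4} dominates $|D^\alpha_x p_{c,B}(s,x;t,y)|$ by $P(t-s,1/(t-s),|x|,|y|)\, p_{c,B}(s,x;t,y)$ for $|\alpha|\leq 2$, and on a bounded neighborhood of any prescribed $x_0$ Lemma~\ref{thm:3.3} makes this expression dominated by an integrable function of $y$ alone. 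Dominated convergence therefore yields
\begin{equation*}
  \int_{\bb R^d} \partial_{x^i x^j} p_{c,B}(s,x;t,y)\,\diff y
  = \partial_{x^i x^j}\!\int_{\bb R^d} p_{c,B}(s,x;t,y)\,\diff y
  = \partial_{x^i x^j}(1) = 0.
\end{equation*}

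The $x$-identity does not admit the same trick, since one cannot pull a derivative in $x$ outside of an integral over $x$; I would instead use Fubini and the fundamental theorem of calculus. Fix $y$ and $s<t$, write the integrand as $\partial_{x^i}\!\bigl(\partial_{x^j} p_{c,B}(s,x;t,y)\bigr)$, and integrate first in $x^i$. The pointwise estimates from Lemmas~\ref{thm:3.3} and~\ref{thm:3.4} force $|\partial_{x^j} p_{c,B}(s,x;t,y)|\to 0$ as $|x^i|\to\infty$ with the remaining coordinates of $x$ held fixed, since the polynomial prefactor is overpowered by the Gaussian factor $\exp\{N|x||y|-\varepsilon|x|^2\}$. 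The fundamental theorem of calculus then makes the inner $x^i$-integral vanish, and a final application of Fubini---again legitimized by the integrable bound above on all of $\bb R^d$---produces the claimed identity $\int_{\bb R^d}\partial_{x^i x^j} p_{c,B}(s,x;t,y)\,\diff x = 0$.
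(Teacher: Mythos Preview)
Your argument is correct. For the $x$-integral you do exactly what the paper does: use the decay of $\partial_{x^j}p_{c,B}$ as $|x^i|\to\infty$ (from Lemmas~\ref{thm:3.3} and~\ref{thm:3.4}), apply the Fundamental Theorem of Calculus in the $x^i$-variable, and finish with Fubini.

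For the $y$-integral you take a different route from the paper. The paper uses the explicit formula $D^2_x p_{c,B} = (f_1 f_1^T - g_{11})\,p_{c,B}$ from the proof of Lemma~\ref{thm:3.4}, interprets $y\mapsto p_{c,B}(s,x;t,y)$ as the density of a Gaussian $Y$ with mean $e^{(t-s)B}x$ and covariance $C_{c,B}(s,t)$, and computes the second moment directly: $\int f_1 f_1^T\,p_{c,B}\,\diff y = e^{(t-s)B^T}C^{-1}_{c,B}\,\bb E[YY^T]\,C^{-1}_{c,B}e^{(t-s)B} = g_{11}$, so the integral vanishes. Your approach---differentiate $\int p_{c,B}(s,x;t,y)\,\diff y = 1$ under the integral sign, with domination supplied by Lemmas~\ref{thm:3.3} and~\ref{thm:3.4}---is more elementary and avoids any explicit computation; it also makes transparent why the identity holds for all $1\le i,j\le d$, not just $i,j\le d_0$. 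The paper's computation, on the other hand, reuses the structural formulas already recorded in Lemma~\ref{thm:3.4} and is perhaps more in keeping with the style of that section. Either way the result follows.
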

\begin{proof}
  Let $f_1$ and $g_{11}$ be defined as in the last lemma, so
  $D^2_x \, p_{c,B} = (f_1 f_1^T - g_{11}) \, p_{c,B}$.	 It follows
  from Lemma~\ref{thm:3.3} and Lemma~\ref{thm:3.4} that
  \begin{equation*}
    \lim_{x^i \rightarrow \pm \infty}
    \dfx{p_{c,B}}{x^j} (s,x^1,\dots,x^i,\dots,x^d;t,y) = 0,
  \end{equation*}
  so $\int_{\bb R^d}
  \dfxx{p_{c,B}}{x^i}{x^j} (s,x;t,y) \, dx = 0$ by the Fundamental Theorem
  of Calculus and Fubini's Theorem.  To handle the second integral, we
  let $Y$ denote a $d$-dimensional, normally distributed random
  variable on some probability space with mean zero and covariance
  $C_{c,B}(s,t)$.  With this notation, we have
  \begin{align*}
    \lefteqn{\int_{\bb R^d} D^2_x p_{c,B}(s,x;t,y) \, dy }
    \qquad \qquad \\
      &= \bb E\Bigl[e^{(t-s)B^T}C\inv_c(s,t) Y Y^T C\inv_c(s,t)e^{(t-s)B}\Bigr]
	    - g_{11}(s,t)
      \\
      &=  0,
  \end{align*}
  which completes the proof.
\end{proof}

We now show that $G_{c,B}$ plays the role of a fundamental solution
for the operator $L^{c,B}$.  We let $C^{0,\infty}(\bb R\cross \bb
R^d)$ denotes the class of continuous functions that are infinitely
continuously differentiable with respect to the spacial variables.
\begin{lemma} \label{thm:3.6} 
  Let $c : \bb R \rightarrow \Smu$ be measurable and fix some
  $f \in C^\infty_K(\bb R^{1+d})$.
  Then $G_{c,B} f \in C^{0,\infty}(\bb R \cross \bb R^d)$ and for
  each multiindex $\alpha \in (\{0\} \cup \bb N)^d$ and fixed time
  $s \in \bb R$ there exists a constant $N = N(B, f, \alpha, s)$ such
  that
  \begin{equation} \label{eq:3.7}
    \|D^\alpha_x G_{c,B} f\|_{L^\infty([s, \infty) \cross \bb R^{d})}
    \leq N.
  \end{equation}
  Moreover, $D^\alpha_x G_{c,B} f$ admits the representation
  \begin{equation} \label{eq:3.8}
    D^\alpha_x G_{c,B} f(s,x)
    = \lim_{\varepsilon \rightarrow 0+}
      \int_{s+\varepsilon}^{s+1/\varepsilon}
      \int_{\bb R^d} D^\alpha_x p_{c,B}(s,x;t,y) f(t,y)
      \, \diff y \, \diff t.
  \end{equation}
  If we further assume that $c \in C^\infty(\bb R_+; \Smu)$,
  then $u = G_{c,B} f$ is a classical solution to the backwards
  equation $\partial_s u + L^{c,B} u = - f$ on $\bb
  R^{1+d}$.
\end{lemma}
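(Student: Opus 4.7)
The plan is to establish smoothness in $x$ via integration by parts in $y$, which transfers all $x$-derivatives from $p_{c,B}$ onto the smooth, compactly supported $f$, thereby sidestepping the singular behavior of $D^\alpha_x p_{c,B}$ near $t=s$. Let $[-T, T]$ contain the time-support of $f$; the formula $G_{c,B} f(s,x) = \int_s^T \int p_{c,B}(s,x;t,y) f(t,y)\,dy\,dt$ is bounded by $(T-s)^+ \|f\|_\infty$, and joint continuity in $(s,x)$ follows from dominated convergence together with the Gaussian bound of Lemma~\ref{thm:3.3}.

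For the derivatives, I would work with the truncated integral
\begin{equation*}
I_\varepsilon(s,x) \defeq \int_{s+\varepsilon}^{s+1/\varepsilon} \int_{\bb R^d} p_{c,B}(s,x;t,y) f(t,y) \, dy \, dt.
\end{equation*}
On $t-s \in [\varepsilon, 1/\varepsilon]$ the bound in Lemma~\ref{thm:3.4} is finite, and the Gaussian decay in $y$ justifies differentiating under the integral, so $D^\alpha_x I_\varepsilon$ is obtained by replacing $p_{c,B}$ with $D^\alpha_x p_{c,B}$. The key observation, extracted from the proof of Lemma~\ref{thm:3.4}, is the identity $D_x p_{c,B} = -e^{(t-s)B^T} D_y p_{c,B}$. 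Iterating it and integrating by parts in $y$ (boundary terms vanish by Gaussian decay) yields
\begin{equation*}
\int_{\bb R^d} D^\alpha_x p_{c,B}(s,x;t,y) f(t,y) \, dy = \sum_{|\beta|=|\alpha|} Q_{\alpha\beta}(t-s) \int_{\bb R^d} p_{c,B}(s,x;t,y) D^\beta_y f(t,y) \, dy,
\end{equation*}
where each $Q_{\alpha\beta}$ is a polynomial in $t-s$ determined by $B$ and $\alpha$ (recall $e^{rB}$ is polynomial in $r$ since $B$ is nilpotent).

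Substituting this into $D^\alpha_x I_\varepsilon$ and using $\int p_{c,B}(s,x;t,y)\,dy = 1$ for $t > s$, the integrand is dominated by a constant depending only on $B$, $\alpha$, the sup-norms of derivatives of $f$, and $T-s$, and is supported in $t \in [s, T]$. Dominated convergence then shows $D^\alpha_x I_\varepsilon(s,x)$ converges locally uniformly in $(s,x)$ to a continuous limit $L_\alpha(s,x)$ as $\varepsilon \to 0^+$, and the same domination produces the bound~\eqref{eq:3.7}. Combined with the pointwise convergence $I_\varepsilon \to G_{c,B} f$, this gives $G_{c,B} f \in C^{0,\infty}(\bb R \cross \bb R^d)$ with $D^\alpha_x G_{c,B} f = L_\alpha$; reversing the integration by parts step on the truncated integrals gives the representation~\eqref{eq:3.8}.

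For the backward equation, when $c$ is smooth Lemma~\ref{thm:3.4} gives $\partial_s p_{c,B} + L^{c,B}_{\xbar} p_{c,B} = 0$ for $t > s$. Differentiating $I_\varepsilon$ in $s$ produces a boundary contribution at the lower endpoint (the upper endpoint drops out once $s + 1/\varepsilon > T$):
\begin{equation*}
\partial_s I_\varepsilon(s,x) = -\int_{\bb R^d} p_{c,B}(s,x;s+\varepsilon,y) f(s+\varepsilon,y) \, dy - L^{c,B} I_\varepsilon(s,x),
\end{equation*}
where moving $L^{c,B}$ outside the truncated integral is permitted by the argument of the preceding paragraph. The boundary integral converges to $-f(s,x)$ as $\varepsilon \to 0^+$ because $p_{c,B}(s,x;s+\varepsilon,\cdot)$ is Gaussian with mean $e^{\varepsilon B} x \to x$ and covariance $C_{c,B}(s,s+\varepsilon) \to 0$ by Lemma~\ref{thm:3.2}, making it an approximate identity at $x$, and $f$ is continuous. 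Passing to the limit using the locally uniform convergence already established yields $\partial_s G_{c,B} f + L^{c,B} G_{c,B} f = -f$. The main obstacle throughout is the singular behavior of $D^\alpha_x p_{c,B}$ as $t \to s^+$, where Lemma~\ref{thm:3.4}'s bound is a polynomial in $1/(t-s)$ that fails to be integrable near $t=s$; the integration-by-parts device circumvents this completely by pushing all derivatives onto the smooth $f$ and leaving only a probability density that integrates to $1$.
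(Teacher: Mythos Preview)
Your proof is correct and follows essentially the same strategy as the paper: both avoid the singularity of $D^\alpha_x p_{c,B}$ at $t=s$ by transferring the $x$-derivatives onto the compactly supported test function $f$, then handle the backward equation by differentiating the truncated integral in $s$ and identifying the boundary term as an approximate identity. The only cosmetic difference is the mechanism for that transfer: the paper performs the change of variable $y \mapsto y - e^{(t-s)B}x$ up front, rewriting $u^\varepsilon(s,x) = \int p_{c,B}(s,0;t,y)\, f(t, e^{(t-s)B}x + y)\, dy\, dt$ so that all $x$-dependence sits in the shifted $f$, whereas you accomplish the identical effect by the dual operation of integrating by parts in $y$ via the relation $D_x p_{c,B} = -e^{(t-s)B^T} D_y p_{c,B}$.
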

\begin{proof}
  Set $g(s, x; t, y) = f(t,  e^{(t-s)B}x+y)$ and define
  \begin{align}
    \label{eq:3.9}
    u^\varepsilon(s,x)
    &\defeq \int_{s+\varepsilon}^{s+1/\varepsilon}
       \int_{\bb R^d} p_{c,B}(s,x;t,y) f(t, y) \, \diff y \, \diff t,
    \\
    \label{eq:3.10}
    v^\alpha(s,x)
    &\defeq \int_s^\infty \int_{\bb R^d}
      p_{c,B}(s,0;t,y) D^\alpha_x g(s,x; t,y) \, \diff y \, \diff t.
  \end{align}
  As $p_{c,B}(s,x;\,t,y) = p_{c,B}(s,0;\,t,y - e^{(t-s)B}x)$, we have
  \begin{equation} \label{eq:3.11}
    u^\varepsilon(s,x)
    = \int_{s+\varepsilon}^{s+1/\varepsilon}
       \int_{\bb R^{d}} p_{c,B}(s,0;t,y) g(s,x; t, y) \, \diff y \, \diff t.
  \end{equation}
  We then choose $T$ so that the support of $f$ is contained in
  the set $(-\infty, T] \cross \bb R^d$, and we observe that
  \begin{equation} \label{eq:3.12}
    |D^\beta_x g(s,x;t,y)|
    \leq \indd{t \leq T} e^{(t-s) |\beta| \|B\|}
	 \|D^\beta_x f\|_{L^\infty(\bb R^{1+d})},
  \end{equation}
  for any multiindex $\beta$.
  In particular, we may differentiate~\eqref{eq:3.11} repeatedly to obtain
  \begin{equation} \label{eq:3.13}
    D^\alpha_x u^\varepsilon(s,x)
    = \int_{s+\varepsilon}^{s+1/\varepsilon}
      \int_{\bb R^{d}} p_{c,B}(s,0;t,y) D^\alpha_x g(s,x; t, y)
		       \, \diff y \, \diff t.
  \end{equation}
  It follows from Lemma~\ref{thm:3.3} and dominated convergence that
  $D^\alpha_x u^\varepsilon$ is continuous.
  %
  %
  It then follows from~\eqref{eq:3.10}, \eqref{eq:3.12},
  and~\eqref{eq:3.13} that
  \begin{equation*}
    |v^\alpha(s,x) - D^\alpha_x u^\varepsilon(s,x)|
    \leq \varepsilon \, e^{\varepsilon |\alpha| \|B\|}
	 \|D^\alpha_x f\|_{L^\infty(\bb R^{1+d})},
  \end{equation*}
  when $s+1/\varepsilon > T$, so $D^\alpha_x u^\varepsilon$ converges
  to $v^\alpha$ uniformly on each set of the form $[u,\infty) \cross
  \bb R^{d}$ as $\varepsilon \rightarrow 0$.  As a result, we may
  conclude that $v^\alpha$ is continuous, $D^\alpha_x G_{c,B} f$
  exists, and $D^\alpha_x G_{c,B} f = v^\alpha$ for each multiindex
  $\alpha$.  In particular,~\eqref{eq:3.7} follows immediately
  from~\eqref{eq:3.12}.

  It follows from Lemma~\ref{thm:3.3}, Lemma~\ref{thm:3.4}, and
  dominated convergence that we may differentiate~\eqref{eq:3.9} to
  obtain
  \begin{align}
    D^\alpha_x u^\varepsilon(s,x)
      &= \int_{s+\varepsilon}^{s+1/\varepsilon}
	 \int_{\bb R^d} D^\alpha_x p_{c,B}(s,x;t,y) f(t, y)
		       \, \diff y \, \diff t.
  \end{align}
  In particular, we see that $D^\alpha_x G_{c,B} f$ admits the
  representation~\eqref{eq:3.8}.

  When $c \in C^\infty(\bb R_+; \Smu)$, another domination argument
  shows that we have $\partial_s u^\varepsilon(s,x) = -
  w^\varepsilon_0(s,x) + w^\varepsilon_1(s,x) + w^\varepsilon_2(s,x)$,
  where
  \begin{align*}
    w^\varepsilon_0(s,x)
    &\defeq \int_{\bb R^{d}} p_{c,B}(s,x;s+\varepsilon,y)
			  f(s+\varepsilon, y) \, \diff y,
    \\
    w^\varepsilon_1(s,x)
    &\defeq \int_{s+\varepsilon}^{s+1/\varepsilon} \int_{\bb R^{d}}
	       \partial_s p_{c,B}(s,x;t,y)
	       f(t, y)	\, \diff t \, \diff y,
    \\
    w^\varepsilon_2(s,x)
    &\defeq \int_{\bb R^{d}} p_{c,B}(s,x;s+1/\varepsilon,y)
			  f(s+1/\varepsilon, y) \, \diff y.
  \end{align*}
  As $f$ is uniformly continuous, $w^\varepsilon_0 \rightarrow f$
  uniformly on $\bb R^{1+d}$.  As $f$ has compact support,
  $w^\varepsilon_2 \rightarrow 0$ uniformly on sets of the form
  $[u,\infty)\cross \bb R^d$.  Lemma~\ref{thm:3.4} asserts that
  $\partial_s p_{c,B}(s,x;t,y) + L^{c,B}_{\xbar}{p_{c,B}}(s,x;t,y) =
  0$ when $s < t$, so
  \begin{equation*}
    w^\varepsilon_1(s,x)
    = - \int_{s+\varepsilon}^{s+1/\varepsilon} \int_{\bb R^{d}}
		L^{c,B}_{\xbar}{p_{c,B}}(s,x;t,y)
		f(t, y)	 \, \diff t \, \diff y.
  \end{equation*}
  We have already shown that $D^\alpha_x u^\varepsilon$ converges
  uniformly to $D^\alpha_x G_{c,B} f$ on sets of the form $[u,\infty)
  \cross \bb R^{d}$ for each multiindex $\alpha$.  The coefficients of
  the operator $L^{c,B}$ are locally bounded, so $w^\varepsilon_1$
  converges to $L^{c,B} G_{c,B} f$ uniformly on compact sets.  Putting
  this all together, we see that $\dx{s} u^\varepsilon$ converges to
  $-f - L^{c,B} G_{c,B} f$ and this convergence is uniform on compact
  sets, so $\partial_s G_{c,B} f$ exists and
  equals $-f - L^{c,B} G_{c,B} f$.
\end{proof}

\section{An $L^p$-estimate} \label{sec:4}

The following theorem is main result of this section.

\begin{theorem} \label{thm:4.1} %
  Let $c : \bb R \rightarrow \Smu$ be measurable, $i,j \in \{1, \dots,
  d_0\}$, $p \in (1, \infty)$, and $f \in C^\infty_K(\bb R^{1+d})$.
  Then $\|\partial_{ij} G_{c,B} f \|_{L^p(\bb R^{1+d})} \leq
  N(B,\mu,p) \, \| f \|_{L^p(\bb R^{1+d})}$.
\end{theorem}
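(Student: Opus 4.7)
The plan is to view $\partial_{ij} G_{c,B}$ as a singular integral operator with kernel $K(\xbar;\ybar) = \partial_{ij,x} p_{c,B}(s,x;t,y)$ and apply the Calder\'on--Zygmund theory adapted to the homogeneous group $(\bb R^{1+d}, \op, \dilbar)$ from Section~\ref{sec:2}. First I would approximate a general measurable $c : \bb R \rightarrow \Smu$ by smooth mollifications $c_n \in C^\infty(\bb R; \Smu)$; Lemmas~\ref{thm:3.3} and~\ref{thm:3.4} provide the uniform pointwise bounds needed to pass to the limit via dominated convergence and the representation~\eqref{eq:3.8} from Lemma~\ref{thm:3.6}. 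The key geometric observation is that for $i,j \in \{1,\dots,d_0\}$ the operator $\partial_{ij}$ has weight $2$ under $\dilbar_\lambda$, so the scaling identity~\eqref{eq:3.6} shows that $K$ has homogeneous degree $-\dbar$, while the translation identity~\eqref{eq:3.4} shows that the operator is left-translation covariant modulo a time-shift in the coefficient.

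The analytic core is the $L^2$ bound $\|\partial_{ij} G_{c,B} f\|_{L^2} \leq N(B,\mu) \|f\|_{L^2}$. Because $c$ depends only on $t$ and $p_{c,B}$ is Gaussian in $x$, I would take the spatial Fourier transform: the computation based on~\eqref{eq:3.2}--\eqref{eq:3.3}, using that $\det e^{(t-s)B} = 1$ since $B$ is nilpotent with zero trace, yields the representation
\begin{equation*}
  \widehat{\partial_{ij} G_{c,B} f}(s, \xi)
  = -\xi_i \xi_j \int_s^\infty
    \exp\Bigl\{-\tfrac12 \langle C_{c,B}(s,t) \eta_{s,t}, \eta_{s,t} \rangle\Bigr\}
    \hat f(t, \eta_{s,t}) \, \diff t,
\end{equation*}
with $\eta_{s,t} = e^{-(t-s)B^T}\xi$. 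The sandwich $\Chat(t-s)/\mu \leq C_{c,B}(s,t) \leq \mu \Chat(t-s)$ established in the proof of Lemma~\ref{thm:3.2} makes the Gaussian factor uniformly controlled, and Plancherel in $(s,x)$ together with a Schur-type bound in the time variable handles the rough time dependence through Minkowski's inequality.

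With $L^2$-boundedness in hand, the remaining task is to verify that $K$ is a standard Calder\'on--Zygmund kernel on the homogeneous space $(\bb R^{1+d}, \op, \dilbar)$ with gauge $\rho$. The size estimate $|K(\xbar;\ybar)| \leq N \rho(\ybar\inv \op \xbar)^{-\dbar}$ follows by combining the polynomial bound of Lemma~\ref{thm:3.4} with the Gaussian decay of Lemma~\ref{thm:3.3}, after using the scaling relations of Lemma~\ref{thm:3.1} to reduce to the unit-scale case $\rho(\ybar\inv \op \xbar) = 1$. The H\"older-type regularity estimate on $K$ required by the CZ theorem follows analogously from Lemma~\ref{thm:3.4} applied to one additional derivative, and the essential cancellation property that makes the singular integral well-defined is exactly the content of Lemma~\ref{thm:3.5}. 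One then invokes the extension theorem for singular integrals on spaces of homogeneous type to promote the $L^2$ bound to an $L^p$ bound for every $p \in (1,\infty)$.

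The main obstacle I expect is establishing the $L^2$ bound in the absence of any continuity hypothesis on $c$. Standard freezing-the-coefficient techniques require at least VMO-regularity in $t$, so one must argue directly via spatial Fourier analysis and handle the rough time-dependence as an external parameter. A secondary technical point is reconciling the Euclidean derivative $\partial_{ij}$ (in which the estimate is stated) with the left-invariant structure on the group $(\bb R^{1+d}, \op)$ (on which the CZ theorem operates); the translation and scaling identities of Lemmas~\ref{thm:3.1} and~\ref{thm:3.2} are exactly what make this compatibility possible and what force the constant to depend only on $B$, $\mu$, and $p$.
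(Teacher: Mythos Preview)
Your overall architecture---$L^2$ boundedness plus a Calder\'on--Zygmund/H\"ormander condition on the homogeneous space, then extension to $L^p$---matches the paper, and your treatment of the kernel estimates (size, smoothness, cancellation from Lemma~\ref{thm:3.5}) is essentially what the paper does in Lemmas~\ref{thm:4.6}--\ref{thm:4.9} and \ref{thm:4.11}.  The genuine gap is in your $L^2$ step.

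Your spatial Fourier computation is correct, but the proposed ``Schur-type bound in the time variable'' cannot close.  After Plancherel in $x$, the operator is an integral operator in time with operator-valued kernel $M_{s,t}:L^2(\bb R^d)\to L^2(\bb R^d)$, and one computes (already in the simplest case $B=0$, $c$ constant) that $\|M_{s,t}\|_{L^2\to L^2}=\sup_\xi|\xi_i\xi_j|\,e^{-\frac12\langle C_{c,B}(s,t)\xi,\xi\rangle}\sim (t-s)^{-1}$ as $t\downarrow s$.  This is not integrable, so neither a Schur test nor Minkowski's inequality can yield $L^2\to L^2$; the twist $\eta_{s,t}=e^{-(t-s)B^T}\xi$ in the degenerate case only makes matters worse.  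The $1/(t-s)$ singularity is the analytic signature of the fact that $\partial_{ij}G_{c,B}$ is a genuine singular integral, and one must exploit cancellation---not just size---to sum the time scales.

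The paper handles this by dyadically decomposing in $|t-s|$ and invoking the Cotlar--Stein almost orthogonality lemma (Lemma~\ref{thm:4.2}).  Each dyadic piece $H^i_\alpha$ is uniformly bounded on $L^2$ (Lemma~\ref{thm:4.8}), but the sum of the norms diverges logarithmically; what saves the argument is the estimate $\|(H^i_\alpha)^*H^j_\alpha\|\le N\,2^{-|i-j|}$, proved in Lemma~\ref{thm:4.10} by combining the cancellation of Lemma~\ref{thm:3.5} with the smoothness bound of Lemma~\ref{thm:4.9}.  If you want to salvage a Fourier-side proof you would need to build in this cancellation explicitly (e.g.\ treat the $1/(t-s)$ kernel as a Hilbert-transform-type object in time), which in effect reproduces the Cotlar--Stein mechanism.
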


We will obtain this estimate by studying the singular integral
representation of $\partial_{ij} G_{c,B}$.  The approach that we
follow is essentially a mixture of techniques from Section XIII.5
of~\cite{stein1993harm} and Section A.2 of~\cite{stroock1979mdp}.  To
reduce the notational burden in this section, we will collect all of
the information that we need to specify a kernel in a single tuple.
Let $B(\bb R; \Smu)$ denote the set of bounded, measurable functions
from $\bb R$ to $\Smu$ and set $\scr A(\mu) = B(\bb R; \Smu) \cross
\{1,\dots,d_0\}^2 \cross \{0,1\}$.  Given a tuple $\alpha =
(c,k,\ell,m) \in \scr A(\mu)$, we define the singular kernel
\begin{equation*}
  h_{\alpha}(s,x;t,y)
  \defeq \indd{m=0} \dfxx{p_{c,B}}{x^k}{x^\ell}(s,x;t,y)
	 + \indd{m=1} \dfxx{p_{c,B}}{x^k}{x^\ell}(t,y; s,x).
\end{equation*}
We will also need the truncated kernels
\begin{equation*}
  h^{i}_{\alpha}(s,x;t,y)
  \defeq \ind{(1,4]}(|t-s| / 4^{i}) \, h_{\alpha}(s,x;t,y),
  \qquad i \in \bb Z,
\end{equation*}
and the operators
\begin{alignat*}{2}
  H^{i}_{\alpha} f(\xbar)
  &\defeq \int_{\bb R^{1+d}} h^i_{\alpha}(\xbar; \ybar)
		 f(\ybar) \, \diff \ybar,
  &\qquad &i \in \bb Z,
  \\
  K^j_\alpha f(\xbar) &\defeq \sum_{i=-j}^{j} H^i_\alpha f(\xbar),
  &\qquad &j \in \bb N.
\end{alignat*}
Each kernel $h^i_\alpha$ is bounded, so these operators are defined in
a pointwise sense for all $f \in C_K(\bb R^{1+d})$.  The main
task in this section is to show that the collection of operators $\scr
K = \{ K^{j}_\alpha : \alpha \in \scr A(\mu), j \in \bb N\}$ is
uniformly bounded with respect to the $L^p$-operator norm for each $p
\in (1,\infty)$.  Once this is done, Theorem~\ref{thm:4.1} follows
easily from Fatou's Lemma.

We will first obtain a uniform bound with respect to the
$L^2$-operator norm using the Cotlar-Stein Almost Orthogonality Lemma,
which we now recall for the reader's convenience.  One proof of this
lemma may be found in Section~VII.2 of \cite{stein1993harm}.
\begin{lemma}[Cotlar-Stein Lemma]
  \label{thm:4.2}
  Let $\{T_i\}$ be a sequence of bounded operators on some $L^2$-space
  and let $\{c_i\}_{i=-\infty}^\infty$ be a sequence of positive
  constants with $N = \sum_{i=-\infty}^\infty c_i < \infty$.  If $\|
  T^*_i T_j \| \leq c^2_{i-j}$ and $\| T_i T^*_j \| \leq c^2_{i-j}$, then
  $\|\sum_{i=-n}^n T_i\| \leq N$ for all $n \geq 1$.
\end{lemma}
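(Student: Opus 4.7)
The plan is to apply the Hilbert-space identity $\|T\|^{2m} = \|(T^*T)^m\|$ iteratively to amplify small savings into a bound that is independent of the number of summands. Write $S_n = \sum_{i=-n}^n T_i$ and note that because $S_n^* S_n$ is self-adjoint the $C^*$-identity gives $\|S_n\|^{2m} = \|(S_n^* S_n)^m\|$ for every integer $m \geq 1$. Expanding,
\begin{equation*}
(S_n^* S_n)^m = \sum_{i_1, j_1, \ldots, i_m, j_m = -n}^n T^*_{i_1} T_{j_1} T^*_{i_2} T_{j_2} \cdots T^*_{i_m} T_{j_m}.
\end{equation*}
The uniform bound $\|T_i\|^2 = \|T_i^* T_i\| \leq c_0^2$ ensures every factor is bounded, and the task reduces to estimating each summand sharply enough that, after summing over all $(2n+1)^{2m}$ tuples and taking the $2m$-th root, the $n$-dependence disappears as $m \to \infty$.

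The key trick is to bound each product in two different ways and take the geometric mean. Pairing adjacent factors as $(T^*_{i_1} T_{j_1})(T^*_{i_2} T_{j_2}) \cdots (T^*_{i_m} T_{j_m})$ and applying $\|T^*_i T_j\| \leq c^2_{i-j}$ to each block yields $\prod_{k=1}^m c^2_{i_k - j_k}$. Shifting the pairing to $T^*_{i_1}(T_{j_1} T^*_{i_2})(T_{j_2} T^*_{i_3}) \cdots (T_{j_{m-1}} T^*_{i_m}) T_{j_m}$ and applying $\|T_i T^*_j\| \leq c^2_{i-j}$ to the interior blocks while bounding the end factors by $c_0$ yields $c_0^2 \prod_{k=1}^{m-1} c^2_{j_k - i_{k+1}}$. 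Their geometric mean is
\begin{equation*}
\|T^*_{i_1} T_{j_1} \cdots T^*_{i_m} T_{j_m}\| \leq c_0 \, c_{i_1 - j_1} c_{j_1 - i_2} c_{i_2 - j_2} \cdots c_{j_{m-1} - i_m} c_{i_m - j_m},
\end{equation*}
which contains $2m-1$ factors arranged so that consecutive indices in the tuple are linked by a $c$-factor.

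I then sum over the indices from the inside out: each of $j_m, i_m, j_{m-1}, \ldots, j_1$ appears in exactly one $c$-factor, and after the obvious change of variable each one-dimensional sum is bounded by $\sum_{\ell \in \bb Z} c_\ell = N$, while the outermost index $i_1$ lacks a companion $c$-factor and so contributes at most $2n+1$. This gives
\begin{equation*}
\|S_n\|^{2m} = \|(S_n^* S_n)^m\| \leq (2n+1) \, c_0 \, N^{2m-1},
\end{equation*}
and taking the $2m$-th root and letting $m \to \infty$ for each fixed $n$ yields $\|S_n\| \leq N$. The crux is the double-pairing step: either grouping on its own leaves half of the consecutive index-differences uncontrolled, so summing term-by-term would lose a factor of $(2n+1)^m$ rather than $(2n+1)$, and the resulting bound would blow up rather than stabilize at $N$ in the limit.
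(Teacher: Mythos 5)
Your proof is correct and is precisely the classical amplification argument (the $\|S_n\|^{2m}=\|(S_n^*S_n)^m\|$ expansion with the two pairings and their geometric mean) that the paper relies on: the paper does not prove Lemma~\ref{thm:4.2} itself but cites Section~VII.2 of \cite{stein1993harm}, where this same proof appears. The bookkeeping in your chain of $c$-factors and the final bound $(2n+1)\,c_0\,N^{2m-1}$ with $m\rightarrow\infty$ is accurate, so nothing is missing.
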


Once we have a uniform bound in the $L^2$-operator norm, we will check
that the kernels associated with the operators $K^j_\alpha$ satisfy an
integrable Hormander condition which is adapted to our geometric
setting.  This will allow us to obtain a uniform bound with respect to
the $L^p$-operator norm for $p \in (1,2)$ using the following theorem.
\begin{theorem} \label{thm:4.3} %
  Let $p \in (1,2)$, let $k$ be a bounded, measurable function, and
  set
  \begin{equation} \label{eq:4.1}
    K f(\xbar) = \int_{\bb R^{1+d}} k(\xbar; \ybar) \, f(\ybar) \,
		 \diff \ybar,
    \qquad f \in C^\infty_K(\bb R^{1+d}).
  \end{equation}
  Suppose that there exists a constant $N_1$ such that
  \begin{equation} \label{eq:4.2}
    \begin{gathered}
      \int_{\rho(\zbar\inv\op\xbar) \geq N_1 \rho(\zbar\inv\op\ybar)}
	|k(\xbar; \ybar) - k(\xbar; \zbar)| \, \diff \xbar
	\leq N_1,
    \end{gathered}
  \end{equation}
  and a constant $N_2$ such that $\|Kf\|_{L^2} \leq N_2
  \|f\|_{L^2}$ when $f \in C^\infty_K(\bb R^{1+d})$.
  Then there exists a constant $N = N(N_1,
  N_2, p)$ such that $\| H f \|_{L^p} \leq N
  \|f\|_{L^p}$ when $f \in C^\infty_K(\bb R^{1+d})$.
\end{theorem}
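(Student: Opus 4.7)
The plan is to upgrade the $L^2$-hypothesis $\|Kf\|_{L^2} \leq N_2 \|f\|_{L^2}$ to an $L^p$-bound for $p \in (1,2)$ by first establishing a weak-type $(1,1)$ estimate for $K$ and then invoking the Marcinkiewicz interpolation theorem. The endpoint argument follows the standard Calderón--Zygmund template, but it is carried out on the quasi-metric space $(\bb R^{1+d}, \rho, \diff \xbar)$ rather than on Euclidean space. The relevant geometric features come directly from the homogeneous group structure of Section~\ref{sec:2}: left translations preserve Lebesgue measure, $|B_\rho(\xbar, r)| = N_0 r^{\dbar}$ with $N_0 = |B_\rho(0,1)|$ (because $\det \dilbar_\lambda = \lambda^{\dbar}$ and $B_\rho(\xbar, r) = \xbar \op \dilbar_r B_\rho(0,1)$), and $\rho$ satisfies a quasi-triangle inequality $\rho(\xbar \op \ybar) \leq K_0 (\rho(\xbar) + \rho(\ybar))$ for some $K_0 = K_0(B)$. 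Together these properties make $(\bb R^{1+d}, \rho, \diff \xbar)$ a space of homogeneous type in the sense of Coifman--Weiss.

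Given $f \in C^\infty_K(\bb R^{1+d})$ and $\lambda > 0$, I apply the Calderón--Zygmund decomposition on this space at height $\lambda$ to write $f = g + \sum_j b_j$, where $\|g\|_{L^\infty} \leq N \lambda$, $\|g\|_{L^1} \leq \|f\|_{L^1}$, each $b_j$ is supported in a quasi-ball $B_j = B_\rho(\xbar_j, r_j)$ with $\int b_j \, \diff \ybar = 0$ and $\|b_j\|_{L^1} \leq N \lambda |B_j|$, and $\sum_j |B_j| \leq N \lambda\inv \|f\|_{L^1}$. The good part is handled by combining $\|g\|_{L^2}^2 \leq \|g\|_{L^\infty} \|g\|_{L^1} \leq N \lambda \|f\|_{L^1}$ with the $L^2$-hypothesis and Chebyshev's inequality to produce
\begin{equation*}
  |\{|Kg| > \lambda/2\}|
  \leq \frac{4}{\lambda^2} \|Kg\|_{L^2}^2
  \leq \frac{4 N_2^2}{\lambda^2} \|g\|_{L^2}^2
  \leq \frac{N}{\lambda} \|f\|_{L^1}.
\end{equation*}
For the bad part, set $E^* = \bigcup_j B_\rho(\xbar_j, N_1 r_j)$, so $|E^*| \leq N_1^{\dbar} \sum_j |B_j| \leq N \lambda\inv \|f\|_{L^1}$. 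Whenever $\xbar \notin E^*$ and $\ybar \in B_j$ we have $\rho(\xbar_j\inv \op \xbar) \geq N_1 r_j \geq N_1 \rho(\xbar_j\inv \op \ybar)$, so using $\int b_j = 0$ and invoking hypothesis~\eqref{eq:4.2} with $\zbar = \xbar_j$ yields, after Fubini,
\begin{equation*}
  \int_{(E^*)^c} |Kb_j(\xbar)| \, \diff \xbar
  \leq \int_{B_j} |b_j(\ybar)|
       \int_{\rho(\xbar_j\inv \op \xbar) \geq N_1 \rho(\xbar_j\inv \op \ybar)}
       |k(\xbar;\ybar) - k(\xbar;\xbar_j)| \, \diff \xbar \, \diff \ybar
  \leq N_1 \|b_j\|_{L^1}.
\end{equation*}
Summing over $j$ and applying Chebyshev gives $|\{|Kb| > \lambda/2\} \setminus E^*| \leq N \lambda\inv \|f\|_{L^1}$. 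Combining the three bounds yields the weak-type inequality $|\{|Kf| > \lambda\}| \leq N \lambda\inv \|f\|_{L^1}$ with $N = N(N_1, N_2)$, and Marcinkiewicz interpolation against the $L^2$-hypothesis delivers $\|Kf\|_{L^p} \leq N(N_1, N_2, p) \|f\|_{L^p}$ for every $p \in (1,2)$.

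The main obstacle is organizational rather than analytic: the decomposition of $f$ must be performed on the quasi-metric space $(\bb R^{1+d}, \rho, \diff \xbar)$, which requires invoking the Calderón--Zygmund construction in the general setting of a space of homogeneous type (e.g.\ via Christ's dyadic cubes), and the choice of the enlargement factor in $E^*$ must be synchronized with the constant $N_1$ appearing in~\eqref{eq:4.2}. Once the doubling, quasi-triangle, and translation-invariance properties listed in the first paragraph are in hand, the remainder of the argument proceeds exactly as in the classical Euclidean case, and the final constant depends only on $B$, $N_1$, $N_2$, and $p$.
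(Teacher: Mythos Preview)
Your proposal is correct and is precisely the standard Calder\'on--Zygmund argument on a space of homogeneous type: weak-type $(1,1)$ from the H\"ormander condition~\eqref{eq:4.2} plus the $L^2$ bound, followed by Marcinkiewicz interpolation. The paper itself does not supply a proof of Theorem~\ref{thm:4.3}; it simply cites Theorem~3 in Section~I.5 of Stein~\cite{stein1993harm}, which establishes exactly this result (under weaker hypotheses) by the method you outline. Your only addition is the explicit observation that the geometric constants entering the decomposition depend on $B$ through $\dbar$ and the quasi-triangle constant---a dependence the paper leaves implicit since $B$ is fixed throughout.
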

The reader may consult Theorem 3 in Section I.5 of
\cite{stein1993harm} for a proof of Theorem~\ref{thm:4.3} under
weaker hypotheses.  As the collection of operators $\scr K$ is closed
with respect to taking the (formal) adjoint, uniform bounds with
respect to the $L^p$-operator norm for $p \in (2,\infty)$ will then
follow from duality.	It is this last case that we will need in
Section~\ref{sec:5}.

We will begin the process by listing the translation and scaling
properties of the kernels $h^i_\alpha$.  These properties follow
easily from the properties of $p_{c,B}$ given in Lemma~\ref{thm:3.1},
the explicit formula for $\dfxx{p_{c,B}}{x^k}{x^\ell}(s,x;t,y)$ given
in the proof of Lemma~\ref{thm:3.4}, and the fact that $B(\bb R;
\Smu)$ is closed with respect to translation and dilation, so we
leave the verification of this lemma to the reader.  We remind the
reader that $\dbar \geq d + 2$ denotes the homogeneous dimension of
the group $(\bb R^{1+d}, \,\op\,, \dilbar)$ as defined in
Section~\ref{sec:2}.  We also point out that the exponent in the
dilation law~\eqref{eq:4.4} becomes less favorable if we attempt to
differentiate $p_{c,B}$ with respect to $x^i$ with $i > d_0$.  This
explains to a large extent why we must wait until we get to the
probabilistic level to make any changes to the drift.
\begin{lemma} \label{thm:4.4} %
  Let $\alpha \in \scr A(\mu)$ and fix some
  $\zbar = (u,z) \in \bb R^{1+d}$ and $\lambda > 0$.  Then we may find
  $\beta, \gamma \in \scr A(\mu)$ such that
  \begin{align}
    \label{eq:4.3}
    h_{\alpha}(\zbar \op \xbar; \;\zbar \op \ybar)
    &= h_{\beta}(\xbar; \; \ybar),
    \\
    \label{eq:4.4}
    h_{\alpha}(\dil_\lambda \xbar; \;\dil_\lambda \ybar)
    &= \lambda^{-\dbar} \, h_{\gamma}(\xbar; \;\ybar).
  \end{align}
\end{lemma}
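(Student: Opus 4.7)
The plan is to prove each identity by writing the kernel $h_\alpha$ out in terms of $\partial_{x^k}\partial_{x^\ell} p_{c,B}$, applying the corresponding scaling identity from Lemma~\ref{thm:3.1} to $p_{c,B}$, and then exchanging differentiation with the transformation via the chain rule. The key observation that makes this clean is that $k,\ell \in \{1,\dots,d_0\}$, so these coordinates pick up only the first-block dilation factor $\lambda$, and under the translation $\zbar\op\cdot$ they enter linearly through $x \mapsto e^{sB}z + x$.

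For the translation identity~\eqref{eq:4.3}, write $\zbar=(u,z)$ and set $c_1(\tau) = c(u+\tau)$, noting $c_1 \in B(\bb R;\Smu)$ since $\Smu$ is translation-stable. By~\eqref{eq:3.4}, $p_{c,B}(\zbar\op\xbar;\zbar\op\ybar)=p_{c_1,B}(\xbar;\ybar)$, and since $x^k,x^\ell$ appear linearly in the spatial slot of $\zbar\op\xbar$, the chain rule gives $(\partial_{x^k}\partial_{x^\ell} p_{c,B})(\zbar\op\xbar;\zbar\op\ybar) = \partial_{x^k}\partial_{x^\ell}p_{c_1,B}(\xbar;\ybar)$. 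This settles the $m=0$ case with $\beta=(c_1,k,\ell,0)$, and the same argument with $\xbar,\ybar$ swapped handles $m=1$.

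For the dilation identity~\eqref{eq:4.4} (where $\dil_\lambda$ should be read as $\dilbar_\lambda$ acting on $\bb R^{1+d}$), set $c_2(\tau)=c(\lambda^2\tau) \in B(\bb R;\Smu)$ and apply~\eqref{eq:3.6}. Since for $i\leq d_0$ the $i$-th component of $\dil_\lambda x$ equals $\lambda x^i$ by the block structure of $\dilbar_\lambda$, differentiating both sides of the identity $p_{c,B}(\dilbar_\lambda\xbar;\dilbar_\lambda\ybar)=\lambda^{2-\dbar}p_{c_2,B}(\xbar;\ybar)$ in $x^k$ and $x^\ell$ introduces a factor of $\lambda^2$ on the left via the chain rule, so
\begin{equation*}
  (\partial_{x^k}\partial_{x^\ell} p_{c,B})(\dilbar_\lambda\xbar;\dilbar_\lambda\ybar)
    = \lambda^{-\dbar}\, \partial_{x^k}\partial_{x^\ell}p_{c_2,B}(\xbar;\ybar).
\end{equation*}
Taking $\gamma=(c_2,k,\ell,m)$ (and swapping arguments when $m=1$) yields the claim.

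I do not anticipate a real obstacle here: everything reduces to bookkeeping with the scaling laws~\eqref{eq:3.4} and~\eqref{eq:3.6} already established, plus the chain rule. The only point where care is needed is verifying that the dilation factor works out to exactly $\lambda^{-\dbar}$; this depends crucially on the restriction $k,\ell\leq d_0$ noted in the remark preceding the lemma, because differentiating in a higher-block coordinate $x^i$ with $i>d_0$ would contribute $\lambda^{2i+1}$ rather than $\lambda$ per derivative, and the identity would acquire a smaller exponent (as the paper warns, this is precisely why one cannot afford to differentiate $p_{c,B}$ in the drift directions at this stage).
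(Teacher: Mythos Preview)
Your proof is correct and follows essentially the approach the paper outlines (the paper leaves the verification to the reader, citing Lemma~\ref{thm:3.1}, the explicit formula for $\partial_{x^k}\partial_{x^\ell} p_{c,B}$ from the proof of Lemma~\ref{thm:3.4}, and closure of $B(\bb R;\Smu)$ under translation and dilation). The only minor difference is that you bypass the explicit formula $D^2_x p_{c,B} = (f_1 f_1^T - g_{11})\,p_{c,B}$ and instead differentiate the scaling identities~\eqref{eq:3.4} and~\eqref{eq:3.6} directly via the chain rule; since the spatial part of $\zbar\op\xbar$ has identity Jacobian in $x$ and $\dilbar_\lambda$ acts as multiplication by $\lambda$ on the first $d_0$ coordinates, this is a clean and equivalent route to the same conclusion.
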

\begin{remark}
  For each $\alpha \in \scr A(\mu)$, we may find $\beta \in \scr A(\mu)$ such
  that
  \begin{equation*}
    h^i_{\alpha}(\xbar; \ybar)
    = 2^{-i\dbar} h^0_{\beta}(\dilbar_{2^{-i}} \xbar; \; \dilbar_{2^{-i}} \ybar).
  \end{equation*}
  But the Jacobian determinant of the map $\xbar \mapsto
  \dilbar_\lambda \xbar$ is $\lambda^\dbar$, so
  \begin{align*}
    \int_{\bb R^{1+d}} h^i_{\alpha}(\xbar; \ybar)  f(\xbar) \, \diff \xbar
    = \int_{\bb R^{1+d}} h^0_{\beta}(\xbar; \dilbar_{2^{-i}} \ybar)
      f(\dilbar_{2^i} \xbar) \, \diff \xbar,
  \end{align*}
  when either integral is well-defined.
  In the remainder of this section, when we say ``by dilation'', we are
  making use of (minor variations on) this observation.
\end{remark}

\begin{lemma} \label{thm:4.6} %
  There exists an function $\hhat_\mu : \bb R^{d} \rightarrow \bb R_+$
  such that
  \begin{align}
    \label{eq:4.5}
    | h_{\alpha}(s,x; \ybar) | &\leq \hhat_\mu(x),
    \\
    \label{eq:4.6}
    | h_{\alpha}(s,x; \ybar) - h_{\alpha}(s,x; 0) |
    &\leq |\ybar| \, \hhat_\mu(x),
  \end{align}
  for all $|s| \in [1/2, 5]$, $|\ybar| \leq 1/4$, and $\alpha \in \scr
  A(\mu)$.  Moreover, $\hhat_\mu$ may be chosen such that $\int_{\bb
    R^d} |f(x)| \, \hhat_\mu(x) \, \diff x < \infty$ for every
  function $f$ of polynomial growth.
\end{lemma}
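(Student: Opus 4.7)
The crux is that the constraints $|s| \in [1/2, 5]$ and $|\ybar| \leq 1/4$ keep the kernel uniformly away from the singular diagonal $\{s = t\}$ of $p_{c,B}$: writing $\ybar = (t, y)$, the bounds $|s| \geq 1/2$ and $|t| \leq 1/4$ force $|t - s| \in [1/4, 21/4]$, and the sign of $t - s$ is determined solely by the sign of $s$. In particular, for each $\alpha = (c, k, \ell, m) \in \scr A(\mu)$ exactly one of $s > 0$, $s < 0$ is compatible with $h_\alpha \neq 0$, and the time gap $|t - s| \in [1/4, 21/4]$ persists along the entire segment $r \mapsto r \ybar$, $r \in [0, 1]$. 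Consequently every bound below is set on a region where Lemma~\ref{thm:3.3} and Lemma~\ref{thm:3.4} apply with constants depending only on $B$ and $\mu$.

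\textbf{For (4.5)}, I would invoke Lemma~\ref{thm:3.4} to dominate $|h_\alpha(s,x;\ybar)|$ by $P(t-s, 1/(t-s), |x|, |y|)\, p_{c,B}(s,x;t,y)$ (or the version with spatial variables swapped when $m = 1$). Restricting $t - s \in [1/4, 21/4]$ and $|y| \leq 1/4$ collapses $P$ to a polynomial in $|x|$ alone, while Lemma~\ref{thm:3.3} with $[a,b] = [1/4, 21/4]$ gives $p_{c,B}(s,x;t,y) \leq N e^{N |x|/4 - \varepsilon |x|^2}$. Absorbing the linear term into the polynomial prefactor, one may take
\begin{equation*}
  \hhat_\mu(x) = N_1 (1 + |x|^{N_2}) e^{-\varepsilon |x|^2 / 2},
\end{equation*}
which manifestly integrates every function of polynomial growth.

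\textbf{For (4.6)}, I would first treat $c \in C^\infty(\bb R; \Smu)$ and write, by the fundamental theorem of calculus,
\begin{equation*}
  h_\alpha(s,x;\ybar) - h_\alpha(s,x;0)
  = \int_0^1 \bigl\langle \nabla_{\ybar} h_\alpha(s,x;r \ybar),\; \ybar \bigr\rangle \, dr.
\end{equation*}
The components of $\nabla_{\ybar} h_\alpha$ are $\partial_t$- and $\partial_{y^i}$-derivatives of second $x$-derivatives of $p_{c,B}$; Lemma~\ref{thm:3.4} bounds each of these by a polynomial in $(t - s, 1/(t-s), |x|, |y|)$ times $p_{c,B}$, and the same reduction as for (4.5) yields $|\nabla_{\ybar} h_\alpha(s,x; r \ybar)| \leq \hhat_\mu(x)$ (possibly after enlarging $\hhat_\mu$). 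The main obstacle is that Lemma~\ref{thm:3.4}'s $\partial_t$-bound is proved only for smooth $c$, whereas (4.6) must hold for measurable $c$; I would remove this by mollification, picking $c_n \in C^\infty(\bb R; \Smu)$ with $c_n \to c$ in $L^1_{\mathrm{loc}}$. The explicit formulas (3.2)--(3.3) then give uniform convergence of $C_{c_n, B}$, hence of the kernels $h_\alpha$, on compact subsets of $\{t \neq s\}$; since $\hhat_\mu$ depends only on $\mu$ and $B$, the bound transfers to the limit.
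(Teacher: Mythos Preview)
Your proposal is correct and follows essentially the same route as the paper: both arguments exploit that the constraints force $|t-s|$ into a compact interval bounded away from zero, invoke Lemma~\ref{thm:3.4} to get polynomial-times-$p_{c,B}$ bounds on $h_\alpha$ and its $\ybar$-gradient, apply Lemma~\ref{thm:3.3} for the Gaussian decay, use the mean value theorem for~\eqref{eq:4.6}, and pass from smooth to measurable $c$ by mollification and pointwise convergence of the kernels. The paper's $\hhat_\mu$ retains the form $\{P(6,4,|x|,1/4)+P(6,4,1/4,|x|)\}\,e^{N|x|/4-\varepsilon|x|^2}$ (the second summand handling the $m=1$ case where the roles of $x$ and $y$ swap), but your simplified Gaussian envelope is equivalent.
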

\begin{proof}
  Let $\xbar = (s,x)$, $\ybar = (t,y)$, and set $k^{ij}_c(s,x;t,y) \defeq
  \dxx{x^i}{x^j}p_{c,B}(s,x;t,y)$ for $0 \leq i,j \leq d_0$ and $c \in
  C^\infty(\bb R; \Smu)$.  Using Lemma~\ref{thm:3.4}, we may find a
  polynomial $P$ in four variables with positive coefficients
  such that
  \begin{align*}
    \lefteqn{
	     |k^{ij}_c(s,x;t,y)| +
	     |D_{\xbar} k^{ij}_c(s,x; t,y)|
	     + |D_{\ybar} k^{ij}_c(s,x; t,y)|
    }
    \qquad \qquad \qquad \qquad \qquad \qquad \\
    &\leq P(t-s, 1/(t-s), |x|, |y|) \, p_{c,B}(s,x;t,y),
  \end{align*}
  when $t > s$.
  Next we use Lemma~\ref{thm:3.3} to produce constants $N = N(B,\mu)$
  and $\varepsilon = \varepsilon(B,\mu) > 0$ such that $p_{c,B}(s,x;t,y)
  \leq N \exp\{N |x| |y| - \varepsilon(|x|^2 + |y|^2)\}$ when $t-s \in
  [1/4,6]$, and we set
  \begin{align*}
    \hhat_\mu(x)
    &= N \{ P(6,4,x,1/4) + P(6,4,1/4,x) \} \,
       \exp\{N |x| / 4 - \varepsilon |x|^2\}.
  \end{align*}
  As $P$ is a polynomial, $f \, \hhat_\mu$ is integrable when $f$ has
  polynomial growth.

  We now check that~\eqref{eq:4.5} and~\eqref{eq:4.6} holds.  Let $\alpha
  = (c,i,j,0) \in \scr A(\mu)$ with $c \in C^\infty(\bb R; \Smu)$ and
  $1 \leq i,j \leq d_0$.  If $s \in [-5, -1/2]$ and $|\ybar| \leq
  1/4$, then $t-s \in [1/4, 6]$,
  \begin{align*}
    |h_\alpha(s,x;\ybar)|
    &= |k^{ij}_c(s,x;\ybar)|
    \\
    &\leq P(6,4,|x|,1/4) \sup_{|\zbar| \leq 1/4} p_{c,B}(s,x; \zbar)
    \\
    &\leq P(6,4,|x|,1/4) \, N \exp\{N |x| / 4 - \varepsilon |x|^2\}
    \leq \hhat_\mu(x),
  \end{align*}
  and
  \begin{align*}
    |h_\alpha(s,x;\ybar) - h_\alpha(s,x;0)|
    &= |k^{ij}_c(s,x;\ybar) - k^{ij}_c(s,x;0)|
    \\
    &\leq |\ybar| \sup_{|\zbar| \leq 1/4} |D_{\ybar} k^{ij}_c(s,x;\zbar)|
    \leq |\ybar| \, \hhat_\mu(x).
  \end{align*}
  Of course, if $s \in [1/2, 5]$ and $|\ybar| \leq 1/4$, then
  $h_\alpha(s,x;\ybar) - h_\alpha(s,x;0) = 0$, and the inequalities
  holds trivially.

  If we choose any $\beta = (c,i,j,1) \in \scr A(\mu)$ with $c \in
  C^\infty(\bb R; \Smu)$ and $1 \leq i,j \leq d_0$, then
  $h_\alpha(s,x;\ybar) - h_\alpha(s,x;0) = k^{ij}_c(\ybar; s,x) -
  k^{ij}_c(0; s,x)$ and the estimate follows in the same way as the
  previous case.  Finally, to handle the case where $\gamma =
  (c,i,j,k) \in \scr A(\mu)$, but $c$ is only measurable, we choose
  $c_n \in C^\infty(\bb R; \Smu)$ with $\int_I \|c(s) - c_n(s)\| \,
  \diff s \rightarrow 0$ for each compact interval $I$.  Then
  $h_{(c_n,i,j,k)} \rightarrow h_\gamma$ pointwise, and~\eqref{eq:4.5}
  and~\eqref{eq:4.6} hold for each $h_{(c_n,i,j,k)}$, so they also
  hold for $h_\gamma$.
  %
  %
\end{proof}

We will make use the following easy corollary a couple of times.
\begin{corollary} \label{thm:4.7}
  Let $\alpha \in \scr A(\mu)$ and $\ybar \in \bb R^{1+d}$.  Then
  there exists a constant $N = N(B,\mu)$ such that $\int_{\bb
    R^{1+d}} |h^0_\alpha(\xbar; \ybar)| \, \diff \xbar \leq N$.
\end{corollary}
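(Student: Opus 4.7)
The plan is to reduce the integral to the case $\ybar = 0$ via the left-translation invariance of Lemma~\ref{thm:4.4} and then invoke the pointwise majorant of Lemma~\ref{thm:4.6}.

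First I would apply~\eqref{eq:4.3} with $\zbar = \ybar$ to obtain some $\beta \in \scr A(\mu)$ (depending on $\ybar$) such that $h_\alpha(\xbar;\ybar) = h_\beta(\ybar\inv \op \xbar; 0)$. Writing $\xbar = (s,x)$ and $\ybar = (t,y)$, the time component of $\ybar\inv \op \xbar$ equals $s - t$, so the truncation factor $\ind{(1,4]}(|t-s|)$ serves as the truncation indicator for the translated kernel with second argument $0$ as well, and hence $h^0_\alpha(\xbar;\ybar) = h^0_\beta(\ybar\inv \op \xbar; 0)$. Because the group law $(u,z) \op (v,w) = (u+v, e^{vB}z + w)$ is unipotent in the spatial coordinates, left-translation has Jacobian determinant one, so a change of variables gives
\begin{equation*}
  \int_{\bb R^{1+d}} |h^0_\alpha(\xbar; \ybar)| \, \diff \xbar
  = \int_{\bb R^{1+d}} |h^0_\beta(\xbar; 0)| \, \diff \xbar.
\end{equation*}

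For the remaining integral, observe that $h^0_\beta(s,x; 0)$ vanishes unless $|s| \in (1, 4]$, which sits inside $[1/2, 5]$; since $\ybar = 0$ satisfies $|\ybar| \leq 1/4$, estimate~\eqref{eq:4.5} of Lemma~\ref{thm:4.6} yields $|h_\beta(s,x;0)| \leq \hhat_\mu(x)$ throughout the support. Integrating in $s$ over a set of Lebesgue measure $6$ and then in $x$, I obtain
\begin{equation*}
  \int_{\bb R^{1+d}} |h^0_\beta(\xbar; 0)| \, \diff \xbar
  \leq 6 \int_{\bb R^d} \hhat_\mu(x) \, \diff x,
\end{equation*}
and the last clause of Lemma~\ref{thm:4.6} (applied to the constant function $f \equiv 1$, which has polynomial growth) ensures that the right side is a finite constant depending only on $B$ and $\mu$.

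There is no genuine obstacle: the corollary is essentially the direct combination of the pointwise bound from Lemma~\ref{thm:4.6} with the translation invariance in Lemma~\ref{thm:4.4}. The only minor bookkeeping is checking that the truncation $\ind{(1,4]}(|t-s|)$ depends only on the time-difference and is therefore preserved by the left-translation, and that the change of variables introduces no Jacobian factor; both are immediate from the explicit group law.
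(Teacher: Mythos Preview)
Your proof is correct and follows exactly the paper's approach: left-translate to reduce to $\ybar=0$ (noting the Jacobian is one and the truncation depends only on $|t-s|$), then apply the majorant~\eqref{eq:4.5} from Lemma~\ref{thm:4.6} and integrate. You have simply spelled out the details that the paper compresses into one line.
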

\begin{proof}
  Let $\hhat_\mu$ denote function defined in the previous lemma.
  Making use of left-translation and change of variable, we may find
  $\beta \in \scr A(\mu)$ such that
  \begin{align*}
    \int_{\bb R^{1+d}} |h^0_\alpha(\xbar; \ybar)| \, \diff \xbar
    &= \int_{\bb R^{1+d}} |h^0_\beta(\xbar; 0)| \, \diff \xbar
    \leq \int_{\bb R} \int_{\bb R^d} \ind{[1,4]}(|s|) \,
	  \hhat_\mu(x) \, \diff x \, \diff s,
  \end{align*}
  and this last integral is finite.
\end{proof}

\begin{lemma} \label{thm:4.8} %
  Let $\alpha = (c,k,\ell,m) \in \scr A(\mu)$ and let $p \in
  (1,\infty)$.	Then there exists a unique bounded linear operators
  $T: L^p(\bb R^{1+d}) \rightarrow L^p(\bb R^{1+d})$ such that $T$
  agrees with $H^i_\alpha$ on $C^\infty_K(\bb R^{1+d})$.  Moreover, if
  we set $q = p / (p-1)$ and $\alpha^* = (c,k,\ell,1-m) \in \scr A(\mu)$,
  then $T^* : L^q(\bb R^{1+d}) \rightarrow L^q(\bb R^{1+d})$ is the unique
  bounded operator that agrees with $H^i_{\alpha^*}$ on $C^\infty_K(\bb
  R^{1+d})$.
\end{lemma}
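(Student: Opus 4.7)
The plan is to establish that each $H^i_\alpha$ is bounded on $L^p$ by a Young/Schur-type estimate, once I have shown that its kernel $h^i_\alpha$ has uniformly bounded integrals in each variable separately. The uniqueness of the extension will then follow from the density of $C^\infty_K(\bb R^{1+d})$ in $L^p$ for $p \in (1,\infty)$, and the identification of the adjoint will follow from a symmetry of the kernel in its two arguments.

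First I would reduce the problem to the untruncated situation at scale $0$. The Remark after Lemma~\ref{thm:4.4} gives, for each $\alpha \in \scr A(\mu)$ and $i \in \bb Z$, some $\beta \in \scr A(\mu)$ with $h^i_\alpha(\xbar;\ybar) = 2^{-i\dbar} h^0_\beta(\dilbar_{2^{-i}}\xbar; \dilbar_{2^{-i}}\ybar)$. The change of variable $\xbar \mapsto \dilbar_{2^{-i}}\xbar$ has Jacobian $2^{i\dbar}$, so for any fixed $\ybar$,
\begin{equation*}
  \int_{\bb R^{1+d}} |h^i_\alpha(\xbar;\ybar)|\, \diff \xbar
  = \int_{\bb R^{1+d}} |h^0_\beta(\xbar; \dilbar_{2^{-i}}\ybar)|\, \diff \xbar.
\end{equation*}
Using the left-translation property in Lemma~\ref{thm:4.4} and the fact that the map $\xbar \mapsto \zbar \op \xbar$ is measure-preserving (its Jacobian determinant is $1$), one finds $\gamma \in \scr A(\mu)$ for which this integral equals $\int |h^0_\gamma(\xbar; 0)|\, \diff \xbar$, and Corollary~\ref{thm:4.7} bounds the latter by a constant $N = N(B,\mu)$. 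This gives $\sup_{\ybar}\int |h^i_\alpha(\xbar;\ybar)|\,\diff \xbar \leq N$, uniformly in $i$ and $\alpha$. The symmetry $h_\alpha(\xbar;\ybar) = h_{\alpha^*}(\ybar;\xbar)$ (immediate from the definition, since $m \mapsto 1-m$ interchanges the two summands) together with the fact that $|t-s|$ is symmetric in $s,t$ shows $h^i_\alpha(\xbar;\ybar) = h^i_{\alpha^*}(\ybar;\xbar)$, so the same bound applied to $\alpha^*$ yields $\sup_{\xbar}\int |h^i_\alpha(\xbar;\ybar)|\,\diff \ybar \leq N$.

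With these two one-sided integral bounds in hand, Young's inequality for integral operators (or equivalently Schur's test together with Riesz--Thorin interpolation between $L^1$ and $L^\infty$) gives $\|H^i_\alpha f\|_{L^p} \leq N \|f\|_{L^p}$ for every $f \in C^\infty_K(\bb R^{1+d})$ and every $p \in [1,\infty]$. Since $C^\infty_K(\bb R^{1+d})$ is dense in $L^p(\bb R^{1+d})$ for $p \in (1,\infty)$, the operator $H^i_\alpha$ admits a unique bounded linear extension $T$ to all of $L^p$.

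Finally, for the adjoint, take $f,g \in C^\infty_K(\bb R^{1+d})$ and use Fubini together with the identity $h^i_\alpha(\xbar;\ybar) = h^i_{\alpha^*}(\ybar;\xbar)$ noted above to compute
\begin{equation*}
  \langle T f, g \rangle
  = \int \!\! \int h^i_\alpha(\xbar;\ybar) f(\ybar) g(\xbar)\,\diff \ybar\, \diff \xbar
  = \int f(\ybar) \Bigl(\int h^i_{\alpha^*}(\ybar;\xbar) g(\xbar)\,\diff \xbar\Bigr) \diff \ybar
  = \langle f, H^i_{\alpha^*} g \rangle.
\end{equation*}
Thus $T^*$ agrees with $H^i_{\alpha^*}$ on $C^\infty_K(\bb R^{1+d})$, and by the same extension argument applied to $\alpha^*$ and the exponent $q$, this characterizes $T^*$ as the unique bounded operator on $L^q$ extending $H^i_{\alpha^*}$. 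The only point that requires a bit of care is the combined scaling-and-translation step used to reduce the integral $\int |h^i_\alpha(\xbar;\ybar)|\,\diff \xbar$ to the case $i=0,\ \ybar=0$ covered by Corollary~\ref{thm:4.7}; everything else is routine.
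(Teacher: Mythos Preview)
Your proposal is correct and follows essentially the same route as the paper: reduce to $i=0$ by dilation, use Corollary~\ref{thm:4.7} (for both $\alpha$ and $\alpha^*$, via the kernel symmetry $h^i_\alpha(\xbar;\ybar)=h^i_{\alpha^*}(\ybar;\xbar)$) to get the two one-sided integral bounds, conclude $L^p$-boundedness, and identify the adjoint by Fubini. The only cosmetic difference is that the paper writes out the bilinear-form estimate with the elementary Young inequality $|fg|\leq |f|^p/p+|g|^q/q$ in place of invoking Schur's test; the underlying argument is the same.
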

\begin{proof}
  By dilation, it is enough to show that the lemma holds for
  $H^0_{\alpha}$.  Let $f,g \in C^\infty_K(\bb R^{1+d})$, set $E =
  \bb R^{1+d}$, and choose $N = N(B,\mu)$ as in
  Corollary~\ref{thm:4.7} so that $\int_{\bb R^{1+d}}
  |h^0_\alpha(\xbar; \ybar)| \, \diff \xbar \leq N$ for all $\alpha
  \in \scr A$ and $\ybar \in \bb R^{1+d}$. It then follows from
  Tonelli's Theorem and Young's Inequality that
  \begin{align*}
    \int_E |H^0_\alpha f(\xbar) g(\xbar)| \, \diff \xbar
    &\leq \int_E \int_E
	  |h^0_\alpha(\xbar; \ybar)| \, |f(\ybar)|^p / p
	  \, \diff \xbar \, \diff\ybar
    \\ &\qquad
	+ \int_E \int_E
	  |h^0_{\alpha^*}(\xbar; \ybar)| \, |g(\ybar)|^q / q
	  \, \diff \xbar \, \diff\ybar
    \\
    &\leq N \, \bigl(\|f\|_{L^p} / p + \|g\|_{L^p} / q\bigr).
  \end{align*}
  Taking the supremum over $f$ and $g$ with $\|f\|_{L^p} \leq 1$ and
  $\|g\|_{L^q} \leq 1$, we see that $\|H^0_\alpha f\|_{L^p} \leq N
  \|f\|_{L^p}$, so $H^0_\alpha$ extends uniquely to a bounded operator
  $T$ on $L^p(\bb R^{1+d})$.  Moreover, if $f,g \in C^\infty_K(\bb
  R^{1+d})$, then
  \begin{align*}
    \int_E f (\xbar) T^* g(\xbar) \, \diff \xbar
    &= \int_E T f (\xbar) g(\xbar) \, \diff \xbar
    \\
    &= \int_{E} H^i_\alpha f(\xbar) g(\xbar) \, \diff \xbar
    = \int_{E} f(\xbar) H^i_{\alpha^*} g(\xbar) \, \diff \xbar,
  \end{align*}
  where the use of Fubini's Theorem in the last equality is justified
  by the previous inequality.  By varying $f$, we may conclude that
  $H^i_{\alpha^*} g$ is a version of $T^* g$.
\end{proof}

\begin{lemma} \label{thm:4.9}
  There exists a constant $N = N(B, \mu)$ such that
  \begin{equation} \label{eq:4.7}
    \int_{\bb R^{1+d}} | h^i_{\alpha}(\xbar; \ybar)
	     - h^i_{\alpha}(\xbar; \zbar) |
	   \; \diff \xbar
	   \leq N 2^{-i} \rho(\zbar\inv\op\ybar).
  \end{equation}
  for all $\alpha \in \scr A(\mu)$ and $i \in \bb Z$.
\end{lemma}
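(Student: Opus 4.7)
The plan is to use the homogeneous-group symmetries to reduce the lemma to a single scale with one argument at the origin, and then to split the resulting integral according to the size of $\rho(\wbar)$, where $\wbar = \zbar\inv\op\ybar$. For the reduction, the dilation identity in the remark following Lemma~\ref{thm:4.4} gives, for each $\alpha \in \scr A(\mu)$, some $\beta \in \scr A(\mu)$ with $h^i_\alpha(\xbar;\ybar) = 2^{-i\dbar} h^0_\beta(\dilbar_{2^{-i}}\xbar; \dilbar_{2^{-i}}\ybar)$. Changing variables $\xbar \mapsto \dilbar_{2^i}\xbar$ (Jacobian $2^{i\dbar}$) and using the automorphism property $\rho(\dilbar_{2^{-i}}(\zbar\inv\op \ybar)) = 2^{-i}\rho(\zbar\inv\op \ybar)$, the estimate~\eqref{eq:4.7} follows once we prove the case $i=0$ with target $N\rho(\zbar\inv\op\ybar)$. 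A further change $\xbar \mapsto \zbar\op\xbar$ combined with the left-translation identity~\eqref{eq:4.3} (noting that this group translation has unit Jacobian and preserves the time-difference appearing in the truncation defining $h^0$) then reduces the problem to proving, uniformly in $\alpha \in \scr A(\mu)$ and $\wbar \in \bb R^{1+d}$,
\begin{equation*}
  \int_{\bb R^{1+d}} |h^0_\alpha(\xbar;\wbar) - h^0_\alpha(\xbar;0)|\,\diff\xbar \leq N \rho(\wbar).
\end{equation*}

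For this core estimate I would split on the size of $\rho(\wbar)$. When $\rho(\wbar) \geq 1/4$, the triangle inequality and Corollary~\ref{thm:4.7} give a uniform bound on each term, which is at most a constant multiple of $\rho(\wbar)$. When $\rho(\wbar) < 1/4$, the fact that $\rho(\wbar) \leq 1$ yields $|\wbar| \leq \rho(\wbar) < 1/4$, so Lemma~\ref{thm:4.6} applies. Writing $\xbar = (s,x)$ and $\wbar = (\tau, w)$, the kernel $h^0_\alpha(\xbar;\wbar)$ is supported where $|s-\tau| \in (1,4]$ and $h^0_\alpha(\xbar;0)$ where $|s| \in (1,4]$. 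On the intersection of these supports, which lies inside $\{|s| \in [1/2,5]\}$ since $|\tau| \leq 1/4$, the Lipschitz-type bound from Lemma~\ref{thm:4.6} gives $|h_\alpha(\xbar;\wbar) - h_\alpha(\xbar;0)| \leq |\wbar|\hhat_\mu(x) \leq \rho(\wbar)\hhat_\mu(x)$. On the symmetric difference of the two supports, whose $s$-Lebesgue measure is at most $4|\tau| \leq 4\rho(\wbar)$, only one term survives and is pointwise bounded by $\hhat_\mu(x)$. Since $\int_{\bb R^d}\hhat_\mu(x)\,\diff x < \infty$ by the polynomial-growth clause of Lemma~\ref{thm:4.6} applied to the constant function $1$, summing these two contributions produces the required bound.

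The main obstacle I anticipate is the bookkeeping at the edge of the truncation windows: the Lipschitz estimate for $h_\alpha$ alone is insufficient because the two truncated kernels $h^0_\alpha(\xbar;\wbar)$ and $h^0_\alpha(\xbar;0)$ have slightly different supports in $s$, and one must separately collect the small ``jump'' contribution on the symmetric difference using the pointwise $\hhat_\mu$-bound from Lemma~\ref{thm:4.6}. Everything else reduces to routine changes of variables and to the closure of the kernel class $\scr A(\mu)$ under left-translation and dilation.
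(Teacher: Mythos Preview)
Your proposal is correct and follows essentially the same approach as the paper: reduce by dilation and left-translation to the case $i=0$, $\zbar=0$, then use Corollary~\ref{thm:4.7} for large $\rho(\wbar)$ and the pointwise/Lipschitz bounds of Lemma~\ref{thm:4.6} together with the small symmetric difference of the truncation windows for small $\rho(\wbar)$. The only cosmetic difference is that the paper uses an add-and-subtract decomposition $\ind{[1,4]}(|t-s|)h_\alpha - \ind{[1,4]}(|s|)h_\alpha$ rather than your intersection/symmetric-difference split, and you should note (as you did for the intersection) that on the symmetric difference one still has $|s|\in[1/2,5]$, so the pointwise bound from Lemma~\ref{thm:4.6} applies there as well.
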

\begin{proof}
  By left-translation and dilation, it is
  enough to produce a constant $N = N(\mu)$ such that
  \begin{equation} \label{eq:4.8}
      \int_{\bb R^{1+d}} | h^0_{\alpha}(\xbar; \ybar) - h^0_{\alpha}(\xbar; 0) |
	     \; \diff \xbar
      \leq N  \rho(\ybar).
  \end{equation}
  Corollary~\ref{thm:4.7} asserts that we may choose a constant $N_1 =
  N_1(B,\mu)$ such that
  \begin{equation} \label{eq:4.9}
    \int_{\bb R^{1+d}} | h^0_{\alpha}(\xbar; \ybar) - h^0_{\alpha}(\xbar; 0) |
	   \; \diff \xbar
    \leq N_1.
  \end{equation}
  This is a useful bound when $\rho(\ybar)$ is large.

  We now set $(t,y) = \ybar$ and consider the case where $|\ybar| \leq
  1/4$.	 Using Lemma~\ref{thm:4.6}, we may choose a integrable function
  $\hhat_\mu : \bb R^{d} \rightarrow \bb R_+$ such that $|
  h_{\beta}(s,x; \ybar) | \leq \hhat_\mu(x)$ and $| h_{\beta}(s,x;
  \ybar) - h_{\beta}(s,x; 0) | \leq |\ybar| \, \hhat_\mu(x)$ when
  $\beta \in \scr A(\mu)$, $|\ybar| \leq 1/4$, and $|s| \in [1/2,5]$.
  Set $N_2 = N_2(B,\,u) = \int_{\bb R^d} \hhat_\mu(x) \, \diff x$.
  When $|\ybar| \leq 1/4$, we have
  $\bigl|\ind{[1,4]}(|t-s|) - \ind{[1,4]}(|s|)\bigr| \leq
  \ind{[1/2,5]}(|s|)$ and
  \begin{align*}
    \lefteqn{
    \int_{\bb R^{1+d}}
	  | h^0_{\alpha}(\xbar; \ybar) - h^0_{\alpha}(\xbar; 0) |
	   \; \diff \xbar} \qquad \qquad \qquad \\
    &\leq \int_{\bb R} \int_{\bb R^{d}} \bigl|\ind{[1,4]}(|t-s|)
					      - \ind{[1,4]}(|s|)\bigr|
			       \; | h_{\alpha}(s,x; \ybar) |
			       \, \diff x \, \diff s
	  \\ & \qquad \qquad \qquad
	  + \int_{\bb R} \int_{\bb R^{d}} \ind{[1,4]}(|s|) \,
			      \bigl| h_{\alpha}(\xbar; \ybar)
				     - h_{\alpha}(\xbar; 0) \bigr|
			      \, \diff x \, \diff s
    \\
    &\leq \int_{\bb R} \bigl|\ind{[1,4]}(|t-s|)
			     - \ind{[1,4]}(|s|)\bigr| \, \diff s
	  \int_{\bb R^{d}} \hhat_\mu(x) \, \diff x
	  \\ & \qquad \qquad \qquad
	  + |\ybar| \int_{\bb R} \ind{[1,4]}(|s|) \, \diff s
		    \int_{\bb R^{d}} \hhat_\mu(x) \, \diff x
    \\
    &\leq 4|t| N_1 + 3 |\ybar| N_1.
  \end{align*}
  As $\rho(\ybar) \geq |\ybar|$ when $|\ybar| \leq 1$, we may
  conclude that
  \begin{equation} \label{eq:4.10}
    \int_{\bb R^{1+d}}
	  | h^0_{\alpha}(\xbar; \ybar) - h^0_{\alpha}(\xbar; 0) |
	   \; \diff \xbar
    \leq 7 N_1 \rho(\ybar),
    \qquad \text{when $\rho(\ybar) \leq 1/4$.}
  \end{equation}
  We may then produce a constant such that~\eqref{eq:4.8}
  holds by using~\eqref{eq:4.10} when $\rho(\ybar)$ is small and
  using~\eqref{eq:4.9} when $\rho(\ybar)$ is large.
\end{proof}
We now produce the desired bound with respect to the $L^2$-operator norm.
\begin{lemma} \label{thm:4.10} %
  There exists a constant $N=N(B,\mu)$ with $\| K^{j}_{\alpha} f
  \|_{L^2} \leq N \| f \|_{L^2}$ for all $\alpha \in \scr A(\mu)$, $j
  \in \bb N$ and $f \in C^\infty_K(\bb R^{1+d})$.
\end{lemma}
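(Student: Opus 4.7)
The plan is to apply the Cotlar-Stein Almost Orthogonality Lemma (Lemma~\ref{thm:4.2}) to the sequence $T_i = H^i_\alpha$ for $-j \leq i \leq j$, extended to bounded operators on $L^2(\bb R^{1+d})$ via Lemma~\ref{thm:4.8}, where $T_i^* = H^i_{\alpha^*}$ with $\alpha^* = (c,k,\ell,1-m)$. First, Schur's test combined with Corollary~\ref{thm:4.7} (and its swap-variant, obtained by applying the corollary to $\alpha^*$ and invoking $h_{\alpha^*}(\xbar;\ybar) = h_\alpha(\ybar;\xbar)$) yields $\|T_i\|_{L^2 \to L^2} \leq N_0(B, \mu)$ uniformly; this gives the trivial bound $\|T_i T_k^*\| \leq N_0^2$, which must be upgraded to geometric decay in $|i - k|$ for Cotlar-Stein to apply.

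The decay will come from the second-derivative cancellations in Lemma~\ref{thm:3.5}. Fix $i \geq k$ (the case $i < k$ is handled by taking adjoints, and $T_i^* T_k$ is treated symmetrically). The kernel of $T_i T_k^*$ is
\[
M(\xbar, \ybar) = \int h^i_\alpha(\xbar; \zbar)\, h^k_{\alpha^*}(\zbar; \ybar) \, d\zbar.
\]
Integrating Lemma~\ref{thm:3.5} over the truncation window gives $\int h^k_{\alpha^*}(\zbar; \ybar) \, d\zbar = 0$ for every $\ybar$, so I may subtract $h^i_\alpha(\xbar; \ybar)$ from the first factor to rewrite
\[
M(\xbar, \ybar) = \int [h^i_\alpha(\xbar; \zbar) - h^i_\alpha(\xbar; \ybar)] \, h^k_{\alpha^*}(\zbar; \ybar) \, d\zbar.
\]
Integrating in $\xbar$ and applying Lemma~\ref{thm:4.9} to the bracketed difference yields
\[
\int |M(\xbar, \ybar)| \, d\xbar \leq N\, 2^{-i} \int |h^k_{\alpha^*}(\zbar; \ybar)| \, \rho(\ybar^{-1} \circ \zbar) \, d\zbar.
\]
The remaining integral is then normalized by using Lemma~\ref{thm:4.4}: translate $\ybar$ to the origin (replacing $\alpha^*$ by some $\beta \in \scr A(\mu)$) and rescale via $\dilbar_{2^k}$ so that $h^k_\beta(\cdot; 0)$ becomes $h^0_\gamma(\cdot; 0)$, picking up a factor $2^k$ from $\rho(\dilbar_{2^k}\cdot) = 2^k \rho(\cdot)$. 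The resulting integral $\int |h^0_\gamma(\eta; 0)| \, \rho(\eta) \, d\eta$ is finite uniformly in $\gamma \in \scr A(\mu)$ because on the time support $|t| \in (1,4]$ the Gaussian bound of Lemma~\ref{thm:3.3} (with $|x|=0$) combined with the polynomial factor from Lemma~\ref{thm:3.4} dominates the at-worst-polynomial growth of $\rho((t,y))$. Thus $\sup_\ybar \int |M(\xbar, \ybar)| \, d\xbar \leq N' \, 2^{-(i-k)}$, and paired with the trivial Schur bound $\sup_\xbar \int |M(\xbar, \ybar)| \, d\ybar \leq N_0^2$, Schur's test (geometric mean) gives $\|T_i T_k^*\|_{L^2} \leq N'' \, 2^{-(i-k)/2}$.

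Setting $c_n = \sqrt{N''} \, 2^{-|n|/4}$ produces a summable sequence satisfying the Cotlar-Stein hypotheses for both $\|T_i T_k^*\|$ and $\|T_i^* T_k\|$, and Lemma~\ref{thm:4.2} then delivers the uniform bound on $\|K^j_\alpha\|_{L^2 \to L^2}$. The main technical obstacle will be bookkeeping how a given $\alpha \in \scr A(\mu)$ transforms under the successive left translations and dilations of Lemma~\ref{thm:4.4}, together with the verification of the uniform integrability $\int |h^0_\gamma(\eta; 0)| \, \rho(\eta) \, d\eta \leq N(B, \mu)$; both reduce to careful but routine computation from the Gaussian estimates of Section~\ref{sec:3} and the polynomial growth of the homogeneous norm $\rho$.
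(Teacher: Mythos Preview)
Your proposal is correct and follows essentially the same route as the paper: apply Cotlar--Stein to the $H^i_\alpha$, use the cancellation in Lemma~\ref{thm:3.5} to insert a difference, bound that difference via Lemma~\ref{thm:4.9}, and normalize by the left-translation/dilation of Lemma~\ref{thm:4.4}, reducing to the uniform finiteness of $\int |h^0_\gamma(\eta;0)|\,\rho(\eta)\,d\eta$ (which the paper also uses, via Lemma~\ref{thm:4.6}). The one substantive difference is tactical: the paper works with $(T^i_\alpha)^*T^j_\alpha$ and carries out \emph{both} cases $i\le j$ and $i>j$ to obtain the sharper bound $\|(T^i_\alpha)^*T^j_\alpha\|\le N\,2^{-|i-j|}$ directly, whereas you estimate only one Schur integral of the composed kernel by $N'\,2^{-(i-k)}$ and pair it with the trivial bound via the geometric-mean form of Schur's test to get $N''\,2^{-|i-k|/2}$. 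Your shortcut is legitimate and slightly shorter, at the cost of a weaker (but still summable) decay rate $c_n\sim 2^{-|n|/4}$; the paper's two-case argument yields $c_n\sim 2^{-|n|/2}$ but requires an extra change-of-variables computation with the Jacobian of $\zbar\mapsto\zbar^{-1}\circ\ybar$.
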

\begin{proof}
  Set $E = \bb R^{1+d}$, and let $T^i_\alpha : L^2(E) \rightarrow
  L^2(E)$ denote the unique, bounded operator that agrees with
  $H^i_\alpha$ on $C^\infty_K(E)$.  We will show that the collection
  of operators $\mathcal T = \bigl\{ \sum_{i=-n}^n T^i_\alpha \, : \,
  \alpha \in \scr A(\mu), n \in \bb N \bigr\}$ is uniformly bounded.
  By the Cotlar-Stein Lemma, it is enough to produce a constant $N$
  such that $\|(T^{i}_\alpha)^* T^{j}_\alpha\| \leq N 2^{-|i-j|}$ and
  $\|T^{i}_\alpha (T^{j}_\alpha)^*\| \leq N 2^{-|i-j|}$ for all
  $\alpha \in \scr A(\mu)$ and $i,j \in \bb Z$.  But Lemma
  \ref{thm:4.8} asserts that the class $\mathcal T$ is closed with
  respect to taking adjoints, so it sufficient to show that the first
  of these inequalities holds.

  We will, in fact, produce a constant $N = N(B, \mu)$ such that
  \begin{equation} \label{eq:4.11}
    \int_{E^2} |h^i_\alpha(\xbar;\ybar) h^j_\alpha(\xbar;\zbar)|
	       \, \diff \xbar \, \diff \zbar \leq N 2^{-|i-j|},
    \qquad \text{for all $\ybar \in E$.}
  \end{equation}
  To see that this is sufficient to prove the theorem, assume
  that~\eqref{eq:4.11} holds and choose any $f, g \in C^\infty_K(\bb
  R^{1+d})$.  We then have
  \begin{align*}
    \bigl\langle (T^{i}_\alpha)^* T^j_\alpha f, \, g \bigr\rangle
    &= \langle T^j_\alpha f, \, T^i_\alpha g \rangle
    \\
    &= \int_{E^3}
	 h^i_\alpha(\xbar;\ybar) h^j_\alpha(\xbar;\zbar) f(\ybar) g(\zbar)
	 \, \diff \xbar \, \diff \ybar \, \diff \zbar
    \\
    &\leq \frac{1}{2} \int_{E} \left\{ \int_{E^2}
	  |h^i_\alpha(\xbar;\ybar) h^j_\alpha(\xbar;\zbar)|
	  \, \diff \xbar \, \diff \zbar \right\} f^2(\ybar)
	  \, \diff \ybar
    \\
    &\qquad \qquad
       + \frac{1}{2} \int_{E}  \left\{ \int_{E^2}
	 |h^i_\alpha(\xbar;\ybar) h^j_\alpha(\xbar;\zbar)|
	 \, \diff \xbar \, \diff \ybar \right\}
	 g^2(\zbar) \, \diff \zbar
    \\
    &\leq \half N 2^{-|i-j|} \bigl(\|f\|_{L^2} +  \|g\|_{L^2}\bigr),
  \end{align*}
  where we have used Fubini's Theorem and Young's inequality.  If we
  then take the supremum over $f$ and $g$ with $\|f\|_{L^2} \leq 1$
  and $\|g\|_{L^2} \leq 1$, then we see that $\|(T^{i}_\alpha)^*
  T^j_\alpha \| \leq N 2^{-|i-j|}$ and the theorem follows.

  We now show that~\eqref{eq:4.11} holds.  Using
  Corollary~\ref{thm:4.7}, we find a constant $N_2 = N_2(B,\mu)$ such
  that $\int_{\bb R^{1+d}} |h^0_\alpha(\xbar; 0)| \, \rho(\xbar) \,
  \diff \xbar \leq N_2$ for all $\alpha \in \scr A(\mu)$.  By
  left-translation and dilation, we may choose $\beta, \gamma \in \scr
  A(\mu)$ such that
  \begin{equation*}
    \int_E h^i_{\alpha}(\xbar; \ybar) \; h^j_{\alpha}(\xbar; \zbar)  \,
	     \diff \xbar
    = \int_E h^0_{\beta}(\xbar; 0) \;
       h^j_{\gamma}(\dilbar_{2^i} \xbar;\; \ybar\inv\op \zbar) \, \diff \xbar.
  \end{equation*}
  After a change of variable and an application of the cancellation
  property given in Lemma~\ref{thm:3.5}, we have
  \begin{align*}
    \lefteqn{
    \int_E
    \int_E h^0_{\beta}(\xbar; 0)
		       h^j_{\gamma}(\dilbar_{2^i} \xbar;\;
                                    \ybar\inv \op \zbar)
	       \, \diff \xbar \, \diff \zbar} \qquad\qquad \\
    &= \int_E h^0_{\beta}(\xbar; 0) \,
	  \left\{ \int_E
		  h^j_{\gamma}(\dilbar_{2^i} \xbar;\; \zbar)
		  - h^j_{\gamma}(0;\; \zbar)
		  \, \diff \zbar \right\} \, \diff \xbar.
  \end{align*}
  Letting $N_3 = N_3(B, \mu)$ denote the constant obtained in
  Lemma~\ref{thm:4.9}, we see that
  \begin{align*}
    \lefteqn{
    \int_{E^2} |h^i_{\alpha}(\xbar; \ybar) h^j_{\alpha}(\xbar; \zbar)|
	       \, \diff \xbar \, \diff \zbar
    } \qquad \qquad \\
    &\leq \int_{E} |h^0_{\beta}(\xbar; 0)| \,
	  \left\{ \int_E
		  | h^j_\gamma(\dilbar_{2^i} \xbar; \zbar)
		  - h^j_\gamma(0; \zbar) |
		  \, \diff \zbar \right\} \, \diff \xbar
    \\
    &\leq 2^{-j} N_3 \int_{E} |h^0_{\beta}(\xbar; 0)| \,
		     \rho(\dilbar_{2^i} \xbar) \, \diff \xbar
    \\
    &= 2^{i-j} N_3 \int_{E} |h^0_{\beta}(\xbar; 0)| \, \rho(\xbar) \, \diff \xbar
    \\
    &\leq N_2 \, N_3 \, 2^{i-j}.
  \end{align*}

  To handle the case where $i > j$, we choose new $\beta, \gamma \in
  \scr A(\mu)$ such that
  \begin{equation*}
    \int_E h^i_{\alpha}(\xbar; \ybar) \, h^j_{\alpha}(\xbar; \zbar)  \,
	     \diff \xbar
    = \int_E h^i_{\beta}(\dilbar_{2^j} \xbar; \zbar\inv\op \ybar)
      \, h^0_{\gamma}(\xbar;\; 0) \, \diff \xbar.
  \end{equation*}
  The matrix $B$ is strictly lower triangular, so $\det e^{-sB} = \pm
  1$.  In particular, if we fix some $\ybar = (t,y) \in E$, then we see
  that the absolute value of the Jacobian determinant of the map
  $\zbar = (u,z) \mapsto \zbar\inv \op \ybar = (t-u, y -
  e^{(t-u)B}z)$ is one.	 This means that
  \begin{align*}
    \int_E
      h^i_{\beta}(\dilbar_{2^i} \xbar; \zbar\inv\op \ybar) \,
      \diff \zbar
    &= \int_E
       h^i_{\beta}(\dilbar_{2^i} \xbar; \zbar) \,
	\diff \zbar,
  \end{align*}
  for each fixed $\xbar$.  Arguing as in the previous case, we see that
  \begin{align*}
    \int_{E^2} |h^i_{\alpha}(\xbar; \ybar) h^j_{\alpha}(\xbar; \zbar)|
	       \, \diff \xbar \, \diff \zbar
    &\leq \int_E
	  \left\{ \int_E
		  | h^i_\beta(\dilbar_{2^j} \xbar; \zbar)
		  - h^i_\beta(0; \zbar) |
		  \, \diff \zbar \right\}
		  h^0_{\gamma}(\xbar;\; 0) \, \, \diff \xbar
    \\
    &\leq N_2 \, N_3 \, 2^{j-i}.
  \end{align*}
  We have now shown that~\eqref{eq:4.11} holds, so the proof is
  complete.
\end{proof}

\begin{lemma} \label{thm:4.11} %
  Let $p \in (1,2)$.  Then there exists a constant $N= N(B,\mu,p)$
  such that $\|K^j_\alpha f\|_{L^p} \leq N \|f\|_{L^p}$ for all
  $\alpha \in \scr A(\mu)$, $j \in \bb N$, and $f \in C^\infty_K(\bb
  R^{1+d})$.
\end{lemma}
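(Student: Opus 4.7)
The plan is to deduce Lemma~\ref{thm:4.11} from Theorem~\ref{thm:4.3} applied to the operators $K^j_\alpha$. Lemma~\ref{thm:4.10} already supplies the required uniform $L^2$-bound, so the only remaining task is to verify the integrable H\"ormander condition~\eqref{eq:4.2} for the kernels $k^j_\alpha(\xbar;\ybar) = \sum_{i=-j}^j h^i_\alpha(\xbar;\ybar)$ with a constant that is independent of both $j$ and $\alpha$.

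Using Lemma~\ref{thm:4.4} and the fact that the family $\scr A(\mu)$ is closed under left translation, I would first reduce to the case $\zbar = 0$. Setting $r = \rho(\ybar)$, the task becomes bounding
\begin{equation*}
  \sum_{i=-j}^j \int_{\rho(\xbar) \geq N_1 r}
    \bigl| h^i_\alpha(\xbar;\ybar) - h^i_\alpha(\xbar; 0) \bigr|
    \, \diff \xbar
\end{equation*}
uniformly in $j$, $\alpha$, and $\ybar$. My strategy is to split this sum according to whether $2^i \geq r$ (``large scales'') or $2^i < r$ (``small scales'') and apply a different estimate in each regime.

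For the large-scale terms, Lemma~\ref{thm:4.9} immediately bounds each integral by $N 2^{-i} r$, and the resulting geometric series $r \sum_{2^i \geq r} 2^{-i}$ is uniformly bounded. For the small-scale terms, Lemma~\ref{thm:4.9} is useless because its bound diverges as $i \to -\infty$; instead I would discard the difference and estimate the integrand by $|h^i_\alpha(\xbar;\ybar)| + |h^i_\alpha(\xbar; 0)|$. Dilating by $2^{-i}$ via the remark after Lemma~\ref{thm:4.4} reduces $\int_{\rho(\xbar) \geq N_1 r} |h^i_\alpha(\xbar; 0)| \, \diff \xbar$ to an integral of some $|h^0_\beta(\xbar'; 0)|$ over the region $\{\rho(\xbar') \geq N_1 r / 2^i\}$, and combining the pointwise estimate from Lemma~\ref{thm:4.6} with the Gaussian tail of $\hhat_\mu$ on this region yields super-exponential decay of the form $C \exp\{-\varepsilon (r/2^i)^2\}$, which is easily summed over small $i$. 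The $\ybar$-term is handled identically after another left translation, using the quasi-triangle inequality for the homogeneous norm $\rho$ to preserve the constraint $\rho(\xbar') \gtrsim r/2^i$ up to enlarging $N_1$.

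The main technical obstacle lies in the small-scale regime: one must exploit the fact that $h^i_\alpha(\xbar;\ybar)$ is essentially concentrated in a thin time-slab of thickness $4^i$ and enjoys Gaussian spatial decay on that scale, so that integrating it outside the $\rho$-ball of radius $N_1 r \gg 2^i$ produces negligible mass. Once~\eqref{eq:4.2} has been verified uniformly, Theorem~\ref{thm:4.3} combined with Lemma~\ref{thm:4.10} delivers the announced $L^p$-estimate for $p \in (1,2)$.
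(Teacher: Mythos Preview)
Your proposal is correct and follows essentially the same route as the paper: reduce to $\zbar=0$ by left translation, split the sum into large and small scales, use Lemma~\ref{thm:4.9} on the large scales, and on the small scales discard the difference, dilate, and exploit the pointwise bound of Lemma~\ref{thm:4.6}. The only cosmetic difference is that the paper first normalizes $\rho(\ybar)$ into $(1/2,1]$ by a dilation (so the split occurs at $i=0$) and packages the small-scale tail sum via the sublinear-growth function $\phi(\xbar)=\sum_{i\geq 1}\ind{\{\rho(\xbar)\geq 4^i\}}$ rather than appealing directly to Gaussian decay, but the content is identical.
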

\begin{proof}
  First we observe that it is enough to produce a constant $N_1 =
  N_1(B,\mu)$ such that
  \begin{align}
    \label{eq:4.12}
    \sum_{i=-\infty}^\infty
    \int_{\rho(\zbar\inv\op\xbar) \geq N_1 \rho(\zbar\inv\op\ybar)}
	|h^{i}_{\alpha}(\xbar; \ybar) - h^{i}_{\alpha}(\xbar; \zbar)|
	\, \diff \xbar &\leq N_1.
  \end{align}
  To see this, suppose that~\eqref{eq:4.12} holds and set
  $k^j_\alpha(\xbar; \ybar) = \sum_{i=-j}^{j} h^i_\alpha(\xbar;
  \ybar)$.  Then $k^j_\alpha$ is bounded, $K^j_\alpha f(\xbar) =
  \int_{\bb R^{1+d}} k^j_\alpha(\xbar;\ybar) f(\ybar) \, \diff \ybar$
  when $f \in C^\infty_K(E)$, and
  \begin{equation*}
    \int_{\rho(\zbar\inv\op\xbar) \geq N_1 \rho(\zbar\inv\op\ybar)}
	|k^{j}_{\alpha}(\xbar; \ybar) - k^{j}_{\alpha}(\xbar; \zbar)|
	\, \diff \xbar \leq N_1.
  \end{equation*}
  Lemma~\ref{thm:4.10} asserts that we may choose a constant $N_2 =
  N_2(B,\mu)$ such that $\| K^{j}_{\alpha} f \|_{L^2} \leq N \| f
  \|_{L^2}$ for all $f \in C^\infty_K(\bb R^{1+d})$.  We may then
  invoke Theorem~\ref{thm:4.3} to produce a constant $N = N(N_1, N_2,
  p)$ such that $\| K^{j}_{\alpha} f \|_{L^p} \leq N \| f \|_{L^p}$
  for all $f \in C^\infty_K(\bb R^{1+d})$.  The constants $N_1$ and
  $N_2$ only depend upon $B$ and $\mu$, so the constant $N$ only
  depends upon $B$, $\mu$, and $p$.

  Rather than prove~\eqref{eq:4.12} directly, we will instead produce a
  constant $N_1 = N_1(B,\mu)$, such that
  \begin{align}
    \label{eq:4.13}
    \sum_{i=-\infty}^\infty
    \int_{\rho(\xbar) \geq N_1 / 2}
	|h^{i}_{\alpha}(\xbar; \ybar) - h^{i}_{\alpha}(\xbar; 0)|
	\, \diff \xbar &\leq N_1,
    \qquad \text{when $\rho(\ybar) \leq 1$.}
  \end{align}
  It is easy to check that~\eqref{eq:4.12} follows from~\eqref{eq:4.13}
  after a left-translation that moves $\zbar$ to zero and a dilation that
  puts $\rho(\ybar) \in (1/2,1]$.

  We will show that~\eqref{eq:4.13} holds by handling the terms
  where $i \geq 0$ and $i < 0$ separately.  To handle the terms where
  $i \geq 0$, we invoke Lemma~\ref{thm:4.9} to produce a constant
  $N_2 = N_2(B,\mu)$ such that
  \begin{equation*}
    \int_E |h^{i}_{\alpha}(\xbar; \ybar)
	    - h^{i}_{\alpha}(\xbar; 0)| \, \diff \xbar
    \leq 2^{-i} N_2 \rho(\ybar).
  \end{equation*}
  In particular, we have
  \begin{equation} \label{eq:4.14}
    \sum_{i=0}^\infty \int_E
      |h^{i}_{\alpha}(\xbar; \ybar) - h^{i}_{\alpha}(\xbar; 0)| \, \diff \xbar
      \leq 2 N_2,
      \qquad \text{ when $\rho(\ybar) \leq 1$.}
  \end{equation}

  We now handle the terms where $i < 0$.  The map $(\xbar, \zbar)
  \mapsto \rho(\zbar\op\xbar)$ is continuous and bounded on the
  compact set $\{ (\xbar,\zbar) \in \bb R^{1+d} \cross \bb R^{1+d} :
  \rho(\xbar) \leq 1, \, \rho(\zbar) \leq 1\}$, so we may choose $N_3
  = N_3(B)$ so large that $\rho(\xbar) \geq 1$ when
  $\rho(\zbar\op\xbar) \geq N_3$ and $\rho(\zbar) \leq 1$.  We then
  choose $\beta, \gamma \in \scr A(\mu)$ such that
  \begin{align*}
    \int_{\rho(\xbar) \geq N_3} |h^{i}_{\alpha}(\xbar; \zbar)| \, \diff \zbar
    &= \int_{\rho(\zbar\op\xbar) \geq N_3}
	   |h^{i}_{\beta}(\xbar; 0)| \, \diff \xbar
    \\
    &\leq \int_{\rho(\xbar) \geq 1}
	   |h^{i}_{\beta}(\xbar; 0)| \, \diff \xbar
    = \int_{\rho(\zbar) \geq 4^{-i}}
	   |h^0_{\gamma}(\xbar; 0)| \, \diff \xbar,
  \end{align*}
  when $\rho(\zbar) \leq 1$.

  Now define the function $\phi(\xbar) \defeq \sum_{i=1}^{\infty}
  \ind{\{\rho(\xbar) \geq 4^{i}\}}$ and observe that $\phi$ has
  sublinear growth.  We may then use Lemma~\ref{thm:4.6} to produce a
  function $\hhat_\mu : \bb R^d \rightarrow \bb R_+$ such that $N_4 =
  N_4(B,\mu) = \int_{\bb R^d} \phi(4,x) \hhat_\mu(x) \, \diff x <
  \infty$ and $h_{\alpha}(s,x; 0,0) \leq \hhat_\mu(x)$ for all $\alpha
  \in \scr A(\mu)$ and $|s| \in [1,4]$.
  We then see that
  \begin{align*} \label{eq:6}
    \sum_{i=-\infty}^{-1}
      \int_{\rho(\xbar) \geq N_3}
	|h^{i}_{\alpha}(\xbar; \zbar)| \, \diff \xbar
    &\leq \int_{\bb R} \int_{\bb R^d} \ind{[1,4]}(|s|) \, \phi(s,x) \hhat(x)
					\, \diff x \, \diff s
    \leq 6 N_4,
  \end{align*}
  when $\rho(\zbar) \leq 1$.  In particular, we have
  \begin{equation*}
    \sum_{i=-\infty}^{-1} \int_{\rho(\xbar) \geq N_3}
      |h^{i}_{\alpha}(\xbar, \ybar) - h^{i}_{\alpha}(\xbar, 0)| \, \diff \xbar
      \leq 12 \, N_4,
      \qquad \text{ when $\rho(\ybar) \leq 1$.}
  \end{equation*}
  We have now shown that~\eqref{eq:4.13} holds, so the proof is complete.
\end{proof}

The remaining case then follows easily by duality.
\begin{corollary} \label{thm:4.12} %
  Let $p \in (2,\infty)$.  Then there exists a constant $N= N(B,\mu,p)$
  such that $\|K^j_\alpha f\|_{L^p} \leq N \|f\|_{L^p}$ for all
  $\alpha \in \scr A(\mu)$, $j \in \bb N$, and $f \in C^\infty_K(\bb
  R^{1+d})$.
\end{corollary}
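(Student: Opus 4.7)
The plan is to obtain the case $p \in (2,\infty)$ from the case $p \in (1,2)$ already handled in Lemma~\ref{thm:4.11} by a standard duality argument, using the crucial fact established in Lemma~\ref{thm:4.8} that the adjoint of each $H^i_\alpha$ on $L^p$ is given by $H^i_{\alpha^*}$ on the conjugate space $L^q$, where $\alpha^* = (c, k, \ell, 1-m)$ is again a member of $\scr A(\mu)$. Summing over $i \in \{-j, \dots, j\}$, this means that the $L^p$-adjoint of the bounded extension of $K^j_\alpha$ coincides on $C^\infty_K(\bb R^{1+d})$ with $K^j_{\alpha^*}$.

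Concretely, fix $p \in (2,\infty)$ and set $q = p/(p-1) \in (1,2)$. Given $\alpha = (c,k,\ell,m) \in \scr A(\mu)$, define $\alpha^* = (c,k,\ell,1-m)$, which again lies in $\scr A(\mu)$. Lemma~\ref{thm:4.11} applied at exponent $q$ and tuple $\alpha^*$ furnishes a constant $N = N(B, \mu, q) = N(B,\mu,p)$ such that
\begin{equation*}
  \| K^j_{\alpha^*} g \|_{L^q} \leq N \, \| g \|_{L^q},
  \qquad g \in C^\infty_K(\bb R^{1+d}).
\end{equation*}
Then for any $f, g \in C^\infty_K(\bb R^{1+d})$ we invoke Lemma~\ref{thm:4.8} (summed over $-j \leq i \leq j$) together with H\"older's inequality to get
\begin{equation*}
  \Bigl| \int_{\bb R^{1+d}} K^j_\alpha f(\xbar) \, g(\xbar) \, \diff \xbar \Bigr|
  = \Bigl| \int_{\bb R^{1+d}} f(\xbar) \, K^j_{\alpha^*} g(\xbar) \, \diff \xbar \Bigr|
  \leq \| f \|_{L^p} \, \| K^j_{\alpha^*} g \|_{L^q}
  \leq N \| f \|_{L^p} \| g \|_{L^q}.
\end{equation*}

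Taking the supremum over $g \in C^\infty_K(\bb R^{1+d})$ with $\| g \|_{L^q} \leq 1$ and using the density of $C^\infty_K$ in $L^q$ yields $\| K^j_\alpha f \|_{L^p} \leq N \, \| f \|_{L^p}$, which is the desired estimate. There is no real obstacle here: the technical work, namely the identification of the adjoint via Lemma~\ref{thm:4.8} and the $L^q$-bound for $q \in (1,2)$ via Lemma~\ref{thm:4.11}, has already been done, and the stability of $\scr A(\mu)$ under the involution $m \mapsto 1-m$ ensures that the adjoint remains in the same class of operators. The only minor point to verify is that the supremum over $C^\infty_K$ recovers the $L^p$-norm of $K^j_\alpha f$, which follows from standard density arguments since $K^j_\alpha f \in L^p$ (the kernel $\sum_{i=-j}^{j} h^i_\alpha$ is bounded with bounded support in the time direction, as only finitely many $h^i_\alpha$ contribute).
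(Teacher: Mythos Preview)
Your proof is correct and follows essentially the same approach as the paper's own argument: identify the adjoint via $\alpha^* = (c,k,\ell,1-m)$, apply Lemma~\ref{thm:4.11} at the conjugate exponent $q = p/(p-1) \in (1,2)$, and conclude by duality after taking the supremum over test functions $g$. The paper's proof is slightly terser---it does not explicitly invoke Lemma~\ref{thm:4.8} for the Fubini step nor comment on the density issue---but the substance is identical.
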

\begin{proof}
  Let $\alpha = (c,k,l,m) \in \scr A(\mu)$, set $\alpha^* =
  (c,k,l,1-m)$ and $q = p/(p-1) \in (1,2)$, and choose $N(B,\mu,q)$ as
  in Lemma~\ref{thm:4.11} such that $\|K^j_{\alpha^*} g\|_{L^q} \leq N
  \|g\|_{L^q}$ for all $g \in C^\infty_K(\bb R^{1+d})$.	 We then have
  \begin{align*}
    \int_{\bb R^{1+d}}
	  K^j_\alpha f(\xbar) g(\xbar) \, \diff \xbar
    &= \int_{\bb R^{1+d}}
	  f(\xbar) K^j_{\alpha^*} g(\xbar) \, \diff \xbar
    \\
    &\leq \|f\|_{L^p} \| K^j_{\alpha^*} g \|_{L^q}
    \leq N \|f\|_{L^p} \| g \|_{L^q}.
  \end{align*}
  Taking the supremum over $g$ with $\|g\|_{L^q} \leq 1$, we see that
  $\|K^j_\alpha f\|_{L^p} \leq N \|f\|_{L^p}$.
\end{proof}

The proof of Theorem~\ref{thm:4.1} now follows in a few lines.

\noindent
\textbf{Proof of Theorem~\ref{thm:4.1}} Choose any $p \in (1,
\infty)$, $1 \leq i,j \leq d_0$, and measurable $c : \bb R \rightarrow
\Smu$, and set $\alpha = (c,i,j,0)$.  Using Lemma~\ref{thm:4.10},
Lemma~\ref{thm:4.11}, or Corollary~\ref{thm:4.12}, we may find a constant
$N = N(B,\mu,p)$ such that $\|K^\ell_\alpha f\|_{L^p} \leq
\|f\|_{L^p}$ for all $\ell \in \bb N$ and $f \in C^\infty_K(\bb
R^{1+d})$.  Theorem~\ref{thm:3.6} asserts that the functions
$K^{\ell}_\alpha f$ converges to $\partial_{ij} G_{c,B} f$ pointwise
on $\bb R^{1+d}$ as $\ell \rightarrow \infty$, so we may invoke Fatou's
Lemma to conclude that $\|\partial_{ij} G_{c,B} f\|_{L^p} \leq \|f\|_{L^p}$
for all $f \in C^\infty_K(\bb R^{1+d})$.

\section{Uniqueness for the Martingale Problem} \label{sec:5}

We now use the estimate obtained in the previous section to obtain
uniqueness results for a class of degenerate martingale problems.
Given a law $\bb P$ on $C(\bb R_+; \bb R^d)$, we will refer to the
functionals $f \mapsto \bb E^{\bb P}[\int_0^T f(t,X_t) \, \diff t]$
informally as Green's functionals.  We will obtain an \textit{a
  priori} estimate for the Green's functionals associated with the
solutions to martingale problems in a particular class.  More
specifically, we will show that they are bounded functionals on
$L^p([0,T]\cross\bb R^d)$ for each $T$.  Once this is done, we may use
the estimates obtained in the previous section to obtain a local
uniqueness result.  We will then extend this uniqueness result using a
localization procedure. Finally, we will relax the drift conditions by
employing a second localization step.

In the previous section, it was convenient to work with the
operator $G_{c,B}$ which operated on functions in $C_K(\bb
R^{1+d})$.  We would now prefer to work with a restricted version
of $G_{c,B}$ which operates on functions in $C_K([0,T)\cross\bb
R^{d})$.  Given a function $c : \bb R_+ \rightarrow \bb R^d$, we define
\begin{equation*}
  G^T_{c,B} f(s,x) = \int_0^T \int_{\bb R^d} p_{c,B}(s,x;t,y) f(t,y)
			  \, \diff y \, \diff t,
  \qquad (s,x) \in [0,T] \cross \bb R^{d}.
\end{equation*}

We start by giving some estimates for the operator $G^T_{c,B}$.	 Recall
that $\dbar$ denotes the homogeneous dimension of the group associated
with the matrix $B$.

\begin{lemma} \label{thm:5.1} %
  Let $c : \bb R_+ \rightarrow \Smu$, and let $i,j \in
  \{1,\dots,d_0\}$, $p \in (1,\infty)$, and $f \in
  C^\infty_K([0,T)\cross \bb R^d)$.  Then $\|\partial_{ij}G^T_{c,B}
  f\|_{L^p([0,T]\cross\bb R^d)} \leq N(B,\mu,p)
  \|f\|_{L^p([0,T]\cross\bb R^d)}$.
\end{lemma}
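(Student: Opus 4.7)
The plan is to reduce the statement directly to Theorem~\ref{thm:4.1} by extending data and approximating. First, extend $c$ to a measurable function $\bb R \to \Smu$ (say by setting $c \equiv I^{d_0}$ on $(-\infty, 0)$), and for $f \in C^\infty_K([0,T)\cross\bb R^d)$ let $\tilde f$ denote the extension of $f$ by zero to $\bb R^{1+d}$. Since $p_{c,B}(s,x;t,y)$ vanishes when $t \leq s$ and $\tilde f$ vanishes outside $[0,T)\cross\bb R^d$, we have the identity
\begin{equation*}
  G^T_{c,B} f(s,x) = G_{c,B} \tilde f(s,x),
  \qquad (s,x) \in [0,T]\cross\bb R^d.
\end{equation*}
So on the slab $[0,T]\cross\bb R^d$, estimating derivatives of $G^T_{c,B} f$ is the same as estimating derivatives of $G_{c,B} \tilde f$.

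The obstacle is that $\tilde f$ need not be smooth on $\bb R^{1+d}$, since $f(0,\cdot)$ can be nonzero and so $\tilde f$ may jump at $s = 0$; hence Theorem~\ref{thm:4.1} cannot be applied to $\tilde f$ directly. To remedy this I would introduce a sequence $\phi_n \in C^\infty_c((0,T))$ with $0 \leq \phi_n \leq 1$ and $\phi_n \to 1$ pointwise on $(0,T)$, and set $f_n(s,x) = \phi_n(s) f(s,x)$ extended by zero. Each $f_n$ vanishes in a neighborhood of $\{s=0\}$ and of $\{s=T\}$, hence belongs to $C^\infty_K(\bb R^{1+d})$, with $\|f_n\|_{L^p(\bb R^{1+d})} \leq \|f\|_{L^p([0,T]\cross\bb R^d)}$. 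Theorem~\ref{thm:4.1} then yields
\begin{equation*}
  \|\partial_{ij} G_{c,B} f_n\|_{L^p(\bb R^{1+d})}
    \leq N(B,\mu,p) \, \|f\|_{L^p([0,T]\cross\bb R^d)}.
\end{equation*}

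To pass to the limit, I would establish the pointwise convergence $\partial_{ij} G_{c,B} f_n(s,x) \to \partial_{ij} G^T_{c,B} f(s,x)$ for each $(s,x) \in [0,T]\cross\bb R^d$. The existence of the pointwise second derivative of $G^T_{c,B} f$ follows from a minor adaptation of Lemma~\ref{thm:3.6}: the estimates in Lemma~\ref{thm:3.3} and Lemma~\ref{thm:3.4} remain integrable against a compactly supported $f$ on $[0,T)\cross\bb R^d$, so differentiation under the integral is justified. Dominated convergence (with the same polynomial-times-$p_{c,B}$ bound of Lemma~\ref{thm:3.4} serving as the dominating function, noting that the support of $\phi_n f$ is contained in the fixed compact support of $f$) gives the required pointwise convergence.

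Finally, restricting to the slab $[0,T]\cross\bb R^d$ and invoking Fatou's Lemma,
\begin{equation*}
  \|\partial_{ij} G^T_{c,B} f\|_{L^p([0,T]\cross\bb R^d)}
    \leq \liminf_{n\to\infty}
         \|\partial_{ij} G_{c,B} f_n\|_{L^p(\bb R^{1+d})}
    \leq N(B,\mu,p) \, \|f\|_{L^p([0,T]\cross\bb R^d)},
\end{equation*}
which is the claim. The only subtle points are choosing the cutoff $\phi_n$ so that $f_n$ is smooth on all of $\bb R^{1+d}$ (hence the vanishing near both $s=0$ and $s=T$), and verifying the pointwise convergence of the second derivatives; both are routine given the quantitative estimates on $p_{c,B}$ already in hand.
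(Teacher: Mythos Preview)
Your overall plan---reduce to Theorem~\ref{thm:4.1} by approximation and close with Fatou---is the same as the paper's, but there is a genuine gap in your convergence step. You propose to obtain $\partial_{ij} G_{c,B} f_n(s,x) \to \partial_{ij} G^T_{c,B} f(s,x)$ by dominated convergence, with the polynomial-times-$p_{c,B}$ bound of Lemma~\ref{thm:3.4} as the dominating function. That bound is \emph{not} integrable in $(t,y)$ for fixed $(s,x)$: from the explicit formula $D^2_x p_{c,B} = (f_1 f_1^T - g_{11})\,p_{c,B}$ in the proof of Lemma~\ref{thm:3.4}, the factor $g_{11}^{ij}$ involves $C_{c,B}^{-1}(s,t)$ and is of order $(t-s)^{-1}$ for $i,j\leq d_0$, while $p_{c,B}(s,x;t,\cdot)$ integrates to $1$ in $y$. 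Hence $\int_s^T\int_{\bb R^d} |\partial_{ij} p_{c,B}(s,x;t,y)|\,|f(t,y)|\,\diff y\,\diff t$ diverges whenever $f(s,x)\neq 0$. This is exactly why the representation~\eqref{eq:3.8} in Lemma~\ref{thm:3.6} is written as a principal-value limit rather than an absolutely convergent integral.

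The repair is easy: use the representation $v^\alpha$ of~\eqref{eq:3.10} from the \emph{proof} of Lemma~\ref{thm:3.6}, in which the change of variables~\eqref{eq:3.11} moves the derivatives onto $g(s,x;t,y)=f(t,e^{(t-s)B}x+y)$. Since your cutoff $\phi_n$ acts only in $t$, one has $D^\alpha_x g_n = \phi_n(t)\,D^\alpha_x g$, and the integrand $p_{c,B}(s,0;t,y)\,|D^\alpha_x g(s,x;t,y)|$ is integrable by~\eqref{eq:3.12}; dominated convergence then does yield the pointwise limit you need. The paper sidesteps all of this with a slicker approximation: it extends $f$ smoothly to negative times via Seeley's theorem and multiplies by a cutoff $\phi(ns)$ that equals $1$ on $[0,\infty)$. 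The resulting $\tilde f_n$ agrees with $f$ on the entire slab $[0,T)\times\bb R^d$, so $\partial_{ij} G^T_{c,B} f = \partial_{ij} G_{c,B}\tilde f_n$ there \emph{identically}, and no convergence of second derivatives is needed---one only passes to the limit in $\|\tilde f_n\|_{L^p(\bb R^{1+d})}$.
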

\begin{proof}
  Let $\tilde f \in C^\infty_K(\bb R^{1+d})$ with $\tilde f(s,x) =
  f(s,x)$ when $s \in [0,T)$ and $\tilde f(s,x) = 0$ when $s \geq T$.
  The existence of such an extension is shown in \cite{seeley1964ecfd}.
  Now let $\phi \in C^\infty(\bb R;[0,1])$ with $\ind{[0,\infty)} \leq
  \phi \leq \ind{[-1,\infty)}$, let $N = N(B,\mu,p)$ denote the
  constant obtained in Theorem~\ref{thm:4.1}, and set $\tilde
  f_n(s,x) = \phi(n s) \tilde f(s,x)$.	Then $G^T_{c,B} f(s,x) = G_{c,B}
  \tilde f_n(s,x)$ for all $s \in [0,T]$ and $n \in \bb N$, and
 \begin{align*}
   \|\partial_{ij} G^T_{c,B} f\|_{L^p([0,T)\cross\bb R^d)}
   &\leq
   \liminf_{n \rightarrow \infty}
     \|\partial_{ij}  G_{c,B} \tilde f_n\|_{L^p(\bb R^{1+d})}
   \\
   &\leq \liminf_{n \rightarrow \infty} N \|\tilde f_n\|_{L^p(\bb R^{1+d)}}
   = N \| f\|_{L^p([0,T)\cross\bb R^d)},
  \end{align*}
  so the result follows.
\end{proof}
\begin{lemma} \label{thm:5.2} %
  Let $f \in C_K([0,T)\cross \bb R^d)$, let $c : \bb
  R_+ \rightarrow \Smu$ be measurable, and let $p \in (\dbar/2, \infty)$.
  Then
  \begin{equation} \label{eq:5.1}
    \bigl\|G_{c,B}^T f\bigr\|_{L^\infty([0,T)\cross \bb R^d)}
    \leq N(B, \mu, p) \, T^{1-\dbar/2p}
    \, \|f\|_{L^p([0,T)\cross \bb R^d)}.
  \end{equation}
\end{lemma}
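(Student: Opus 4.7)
The plan is to bound $|G_{c,B}^T f(s,x)|$ pointwise by Hölder's inequality with conjugate exponents $p$ and $q = p/(p-1)$:
\begin{equation*}
  |G_{c,B}^T f(s,x)| \leq \bigl\|p_{c,B}(s,x;\cdot,\cdot)\bigr\|_{L^q([0,T]\cross \bb R^d)}
                    \, \|f\|_{L^p([0,T]\cross\bb R^d)},
\end{equation*}
and then to show that the $L^q$ norm of the transition kernel is bounded uniformly in $(s,x) \in [0,T)\cross \bb R^d$ by a constant multiple of $T^{1-\dbar/(2p)}$.

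For each $s < t$, the map $y \mapsto p_{c,B}(s,x;t,y)$ is a Gaussian density on $\bb R^d$ with mean $e^{(t-s)B}x$ and covariance $C_{c,B}(s,t)$, so a standard computation of $\int p^q$ for a Gaussian gives
\begin{equation*}
  \int_{\bb R^d} p_{c,B}(s,x;t,y)^q \, \diff y
    = \frac{q^{-d/2}}{(2\pi)^{d(q-1)/2}}
      \bigl(\det C_{c,B}(s,t)\bigr)^{-(q-1)/2},
\end{equation*}
independently of $x$. I would then invoke the matrix comparison $C_{c,B}(s,t) \geq \mu\inv \Chat(t-s)$ established in the proof of Lemma~\ref{thm:3.2} together with the scaling identity $\Chat(t-s) = \dil^{1/2}_{t-s} \Chat(1) \dil^{1/2}_{t-s}$. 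Since $\det \dil_\lambda = \lambda^{\dbar-2}$, this yields $\det \Chat(t-s) = (t-s)^{\dbar-2} \det \Chat(1)$, and monotonicity of the determinant on the positive-definite cone produces the lower bound $\det C_{c,B}(s,t) \geq N(B,\mu)\inv (t-s)^{\dbar-2}$.

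Substituting this lower bound, the inner integral is dominated by $N(B,\mu,p)(t-s)^{-(\dbar-2)(q-1)/2}$. The hypothesis $p > \dbar/2$ is precisely equivalent to $(\dbar-2)(q-1)/2 < 1$, which is exactly the threshold that makes this power of $(t-s)$ integrable at the diagonal. Integrating in $t$ over $[s,T]$ gives the estimate
\begin{equation*}
  \bigl\|p_{c,B}(s,x;\cdot,\cdot)\bigr\|^q_{L^q([0,T]\cross\bb R^d)}
    \leq N(B,\mu,p) \, T^{1-(\dbar-2)(q-1)/2}.
\end{equation*}
Taking the $q$-th root and simplifying via $1/q = (p-1)/p$, the exponent becomes $[1-(\dbar-2)(q-1)/2]/q = (2p-\dbar)/(2p) = 1 - \dbar/(2p)$, which is exactly the advertised bound. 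No step is really an obstacle here; the argument is a clean combination of Hölder's inequality, the explicit Gaussian $L^q$ norm, and the scaling of $\Chat$. The only care required is in checking that the arithmetic of the exponents lines up and that the hypothesis $p > \dbar/2$ matches the integrability threshold.
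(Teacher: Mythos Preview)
Your proof is correct and follows essentially the same route as the paper: apply H\"older's inequality with exponents $p$ and $q=p/(p-1)$, reduce to an $L^q$ bound on the Gaussian kernel in the forward variables, and use the scaling $\det \Chat(t-s)=(t-s)^{\dbar-2}\det\Chat(1)$ together with $p>\dbar/2$ to make the time integral converge. The only cosmetic difference is that the paper first dominates $p_{c,B}$ pointwise by the constant-coefficient kernel $p_{\overline c}$ (with $\overline c\equiv\mu I^{d_0}$) and then invokes the translation/dilation laws~\eqref{eq:3.4}--\eqref{eq:3.6}, whereas you evaluate $\int p_{c,B}^q\,dy$ exactly as $q^{-d/2}(2\pi)^{-d(q-1)/2}(\det C_{c,B})^{-(q-1)/2}$ and bound the determinant directly via $C_{c,B}(s,t)\geq\mu^{-1}\Chat(t-s)$; your computation is marginally more direct, but both arguments are the same in substance and yield the same exponent $1-\dbar/(2p)$.
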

\begin{proof}
  Let $\underline c(t) = I^{d_0} / \mu$ and $\overline c(t) = \mu
  I^{d_0}$, so $C_{\underline c}(s,t) \leq C_{c}(s,t) \leq
  C_{\overline c}(s,t)$ for all $t > s \geq 0$.	 The functions
  $\underline c$ and $\overline c$ are translation and dilation
  invariant, so we have (see \eqref{eq:3.5})
  \begin{equation*}
    \frac{\det C_{\underline c}(s,t)}{\det C_{\overline c}(s,t)}
    = \frac{\det \{\dil_{(t-s)}^{1/2} \, C_{\underline c}(0,1)
		   \, \dil_{(t-s)}^{1/2} \}}
	   {\det \{\dil_{(t-s)}^{1/2} \, C_{\overline c}(0,1)
		   \, \dil_{(t-s)}^{1/2}\}}
    = \frac{\det C_{\underline c}(0,1)}{\det C_{\overline c}(0,1)}.
  \end{equation*}
  In particular, if we set $N_1 = \det C_{\underline
    c}(0,1) / \det C_{\overline c}(0,1) > 0$, then we have
  $p_{c,B}(s,x;t,y) \leq N_1^{-1/2} \, p_{\overline c}(s,x;t,y)$.

  Now, if $q \in (1, 1+2/(\dbar-2))$ and $p = q / (q-1) > \dbar/2$,
  then
  \begin{align*}
    \int_{\bb R^d} \{p_{c,B}(s,x; t,y)\}^q \, \diff y
    &\leq N_1^{-q/2} \, \int_{\bb R^d} \{p_{\overline c}(0, 0; t-s, z)\}^q
				       \, \diff z
    \\
    &= N_1^{-q/2} (t-s)^{-(q-1)(\dbar-2)/2}
       \int_{\bb R^d} \{p_{\overline c}(0,0; 1, z)\}^q \diff z,
  \end{align*}
  where the first inequality follows by left-translation (see
  \eqref{eq:3.4}), the second equality follows by dilation
  (see~\eqref{eq:3.6}), and the last integral is finite.  We now
  observe that $q (1 - \dbar/2p) = 1 - (q-1)(\dbar-2)/2$, so
  \begin{align*}
    \int_s^T \int_{\bb R^d} \{p_{c,B}(s,x; u,z)\}^q \, \diff z \, \diff u
    \leq \frac{N_1^{-q/2}}{q (1 - \dbar/2p)} T^{q (1 - \dbar/2p)}
	 \int_{\bb R^d} \{p_{\overline c}(0,0; 1, z\}^q \diff z,
  \end{align*}
  and~\eqref{eq:5.1} follows by duality.
\end{proof}

The next step is to show that the Green's functionals can be expressed
in terms of $G_{c,B}^T$ and a stochastic correction term.

\begin{lemma} \label{thm:5.3} Let $a : \bb R_+ \cross C(\bb R; \bb
  R^{d}) \rightarrow S^{d_0}_+$ be $\bb C^d$-progressive, let $c : \bb
  R \rightarrow \Smu$ be measurable, let $f \in C^\infty_K([0,T)\cross
  \bb R^d)$.  Also let $\bb P$ denote a solution to the $(a(X), B
  X)$-martingale problem starting at $(s,x) \in [0,T) \cross \bb R^d$,
  and define the process $\phi_t(a,c,B,f) = \frac{1}{2}
  \sum_{i,j=i}^{d_0} \bigl\{a_t^{ij} - c^{ij}(t)\bigr\}
  \partial_{ij} G_{c,B}^T f (t, X_t)$ for $t \in [s,T]$.
  Then
  \begin{align}
    \label{eq:5.2}
    \bb E^{\bb P}\Bigl[\int_s^T f(t,X_t) \, \diff t\Bigr]
    = G_{c,B}^T f(s,x)
      + \bb E^{\bb P}\Bigl[\int_s^T \phi_t(a,c,B,f) \, \diff t\Bigr].
  \end{align}
\end{lemma}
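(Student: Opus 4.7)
The plan is to apply It\^o's formula under $\bb P$ to the function $u \defeq G_{c,B}^T f$, which satisfies a backwards Kolmogorov equation with source $-f$ and zero terminal value, and to match the resulting drift against the operator $\Lhat^{a(X), BX}$ coming from the martingale problem. The mismatch between $L^{c,B}$ and $\Lhat^{a(X), BX}$ will be exactly $\phi_t(a,c,B,f)$.

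First I would handle the case $c \in C^\infty(\bb R; \Smu)$. Lemma~\ref{thm:3.6} then gives that $u \in C^{0,\infty}(\bb R \cross \bb R^d)$, that $u$ is bounded (since $p_{c,B}(s,x;t,\cdot)$ is a probability density and $f$ has compact support), that its spatial derivatives are bounded on each slice $[s_0, \infty) \cross \bb R^d$ via \eqref{eq:3.7}, and that $u$ classically satisfies $\partial_s u + L^{c,B} u = -f$ on $[0,T) \cross \bb R^d$ with $u(T, \cdot) \equiv 0$. The algebraic identity
\[
\Lhat^{a(X), BX} u(t, X_t) = (\partial_t u + L^{c,B} u)(t, X_t) + \phi_t(a,c,B,f) = -f(t, X_t) + \phi_t(a,c,B,f)
\]
will hold on $[s, T)$. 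Since $\bb P$ solves the $(a(X), BX)$-martingale problem, the canonical process is a semimartingale under $\bb P$ with drift $BX_t$ and covariance block $a_t(X)$, so I would apply It\^o's formula to $u(\cdot, X_\cdot)$ on $[s, T]$, localizing by the stopping times $\tau_n = \inf\bigl\{t \geq s : \int_s^t \|a_r\| \, \diff r \geq n\bigr\} \wedge T$ to obtain true martingales, take expectations, and pass to the limit $n \to \infty$ using $\tau_n \uparrow T$ together with the boundedness of $u$ and $f$ and the terminal condition $u(T, X_T) = 0$. Using $X_s = x$ and $u(s,x) = G^T_{c,B} f(s,x)$, this will rearrange to \eqref{eq:5.2}.

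To handle general measurable $c$, I would approximate by mollifications $c_n \in C^\infty(\bb R; \Smu)$ of $c$ (the convexity of $\Smu$ keeps these in range) with $c_n(t) \to c(t)$ almost everywhere, and apply the smooth case to each $c_n$. From the explicit formula \eqref{eq:3.2} and the derivative expressions in the proof of Lemma~\ref{thm:3.4}, $p_{c_n,B} \to p_{c,B}$ and $\partial_{ij} p_{c_n, B} \to \partial_{ij} p_{c, B}$ pointwise for $t > s$, uniformly dominated in $n$ by the bounds of Lemmas~\ref{thm:3.3} and~\ref{thm:3.4} (which depend on $c$ only through $\mu$). Dominated convergence will then yield $G^T_{c_n, B} f(s,x) \to G^T_{c, B} f(s,x)$ and $\partial_{ij} G^T_{c_n, B} f(t, X_t) \to \partial_{ij} G^T_{c, B} f(t, X_t)$, and a further dominated convergence using the uniform bound \eqref{eq:3.7} and the same $\tau_n$-localization will transfer the $\phi$-expectation to its limit, giving \eqref{eq:5.2}.

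The hard part will be the It\^o step in the smooth-$c$ case, because $u = G_{c,B}^T f$ is not itself in $C^\infty_K(\bb R^{1+d})$, which is the class to which the martingale problem is defined to apply directly. The way around this is to observe that the martingale problem forces the canonical process to be a semimartingale under $\bb P$ with prescribed characteristics, so classical It\^o's formula applies to any $C^{1,2}$-function with appropriate integrability of the drift and Brownian-integral terms; the uniform bounds \eqref{eq:3.7} together with the $\tau_n$-localization will supply exactly that. The passage from smooth $c_n$ to measurable $c$ is then technical but essentially routine, thanks to the fully explicit nature of $p_{c,B}$.
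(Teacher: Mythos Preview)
Your proposal is correct and follows essentially the same route as the paper. The paper phrases the argument as showing that
\[
  M_t = G_{c,B}^T f(t, X_t) + \int_s^t \bigl\{f(u, X_u) - \phi_u(a,c,B,f)\bigr\}\, \diff u
\]
is a martingale on $[s,T]$ (so that $\bb E[M_T] = \bb E[M_s]$ yields \eqref{eq:5.2} via $G_{c,B}^T f(T,\cdot)=0$), handles smooth $c$ by It\^o's Lemma together with $\partial_s G_{c,B}^T f + L^{c,B} G_{c,B}^T f = -f$ from Lemma~\ref{thm:3.6}, and then passes to general $c$ by choosing $c_n \in C^\infty$ with $c_n \to c$ and observing that the corresponding $M^n$ are \emph{uniformly bounded} martingales (the bounds from Lemma~\ref{thm:3.6} depending on $B,f$ but not on $c$) that converge pointwise to $M$. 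Your direct expectation computation with $\tau_n$-localization and your dominated-convergence argument for the approximation step are equivalent reformulations of these two moves.
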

\def \linftyT{{L^\infty([0,T]\cross \bb R^d)}}
\begin{proof}
  As $G_{c,B}^Tf(t,x) = 0$ for $t = T$, it is enough to
  show that
  \begin{equation} \label{eq:5.3}
    M_t = G_{c,B}^T f(t, X_t)
	     + \int_s^t f(u, X_u) - \phi_u(a,c,B,f) \, \diff u,
    \quad t \in [s,T].
  \end{equation}
  is a martingale.  Using Lemma~\ref{thm:3.6}, we may find a constant
  $N = N(B, f)$ such that $G_{c,B}^Tf \leq N$ and $\partial_{ij}
  G_{c,B}^Tf \leq N$ on $[0,T]\cross\bb R^d$ when $1 \leq i,j \leq
  d_0$.	 Although Lemma~\ref{thm:3.6} is stated in terms of $G_{c,B}$,
  one may repeat the argument given in Lemma~\ref{thm:5.1} to see that
  the conclusions also holds for $G^T_{c,B}$.  As a result, $M$ is
  bounded and the bound does not depend upon $c$.  If $c \in
  C^\infty(\bb R_+; \Smu)$, then we also have $\partial_s G_{c,B}^T f
  + L^{c,B} G_{c,B}^T f= -f$ by the same lemma, and the result follows
  from Ito's Lemma.  To handle the general case, we choose a sequence
  $c_n \in C^\infty(\bb R_+; \Smu)$ with $c_n \rightarrow c$ in
  $L^p([0,T]\cross\bb R^d)$, and we let $M^n$ denote the process
  obtained by replacing $c$ with $c_n$ in \eqref{eq:5.3}.  Then $M^n$
  is a uniformly bounded sequence of martingales that converges
  pointwise to $M$, so we may conclude that $M$ is a martingale.
\end{proof}

We now produce the desired estimate for the Green's functionals.  We
do this by imposing conditions which ensure that the stochastic
correction term in the previous lemma is sufficiently small.
\begin{lemma} \label{thm:5.4} %
  Let $a : \bb R_+ \cross C(\bb R_+; \bb R^d) \rightarrow \Smu$ be
  $\bb C^d$-progressive, let $c : \bb R_+ \rightarrow \Smu$ be
  measurable, let $p \in (\dbar/2, \infty)$, and suppose that $\bb P$
  is a solution to the $(a(X), B X)$-martingale problem starting
  at $(s,x)$.  Then there exists constants $N = N(d,\mu,B,p)$ and
  $\varepsilon = \varepsilon(d,\mu,B,p) > 0$ such that
  \begin{equation} \label{eq:5.4}
    \bb E^{\bb P}\Bigl[\int_s^T |f(t, X_t)| \, \diff t\Bigr]
    \leq N T^{1-\dbar/2p} \|f \|_{L^p([0,T)\cross \bb R^d)},
  \end{equation}
  for all $T \geq 0$ and $f \in C^\infty_K([0,T)\cross\bb R^d)$ when
  \begin{equation} \label{eq:5.5}
    \|a(s,x) - c(s)\| \leq \varepsilon,
    \qquad \text{for all $(s,x) \in \bb R_+ \cross \bb R^d$.}
  \end{equation}
\end{lemma}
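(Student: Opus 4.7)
The plan is to combine Lemma~\ref{thm:5.3}, Lemma~\ref{thm:5.2}, and Lemma~\ref{thm:5.1} in a contraction argument driven by the smallness of $\|a-c\|$. Setting $\mathcal{I}(g):=\bb E^{\bb P}\bigl[\int_s^T|g(t,X_t)|\,\diff t\bigr]$ and applying Lemma~\ref{thm:5.3} for nonnegative $f\in C^\infty_K([0,T)\cross\bb R^d)$ with the given $c$ yields
\[
  \mathcal{I}(f) = G^T_{c,B}f(s,x) + \bb E^{\bb P}\Bigl[\int_s^T \phi_t(a,c,B,f)\,\diff t\Bigr].
\]
Lemma~\ref{thm:5.2} bounds the first term by $N_1 T^{1-\dbar/2p}\|f\|_{L^p}$ with $N_1=N_1(B,\mu,p)$ (the hypothesis $p>\dbar/2$ is essential). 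Since $a_t-c(t)$ is symmetric with operator norm at most $\varepsilon$, each entry satisfies $|a_t^{ij}-c^{ij}(t)|\leq\varepsilon$, so $|\phi_t(a,c,B,f)|\leq(\varepsilon/2)\sum_{i,j=1}^{d_0}|\partial_{ij}G^T_{c,B}f(t,X_t)|$ and hence
\[
  \mathcal{I}(f) \leq N_1 T^{1-\dbar/2p}\|f\|_{L^p} + \frac{\varepsilon d_0^2}{2}\max_{1\leq i,j\leq d_0}\mathcal{I}\bigl(|\partial_{ij}G^T_{c,B}f|\bigr).
\]

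To close this into an $L^p$ estimate I would set $K:=\sup\{\mathcal{I}(g)/\|g\|_{L^p}: g\in C^\infty_K, g\not\equiv 0\}$ and extend the bound $\mathcal{I}(h)\leq K\|h\|_{L^p}$ from $g\in C^\infty_K$ to bounded continuous $h\in L^p$ by approximation: choose $g_n=\chi_n h*\eta_{1/n}\in C^\infty_K$ (cutoff followed by mollification) with $g_n\to h$ in $L^p$ and uniformly on compacts; continuity of $X$ gives uniform convergence on the almost surely compact set $\{(t,X_t):t\in[s,T]\}$, so dominated convergence yields $\mathcal{I}(g_n)\to\mathcal{I}(h)$. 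By Lemma~\ref{thm:3.6} (adapted to $G^T_{c,B}$ as in the proof of Lemma~\ref{thm:5.1}), each $|\partial_{ij}G^T_{c,B}f|$ is bounded and continuous, and Lemma~\ref{thm:5.1} gives $\|\partial_{ij}G^T_{c,B}f\|_{L^p}\leq N_2\|f\|_{L^p}$ with $N_2=N_2(B,\mu,p)$. Substituting $\mathcal{I}(|\partial_{ij}G^T_{c,B}f|)\leq K N_2\|f\|_{L^p}$ and taking the supremum over nonnegative $f$ with $\|f\|_{L^p}=1$ gives $K\leq N_1 T^{1-\dbar/2p}+(\varepsilon d_0^2 N_2/2)K$. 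Choosing $\varepsilon=\varepsilon(d,\mu,B,p)$ so that $\varepsilon d_0^2 N_2/2\leq 1/2$ yields $K\leq 2N_1 T^{1-\dbar/2p}$, and approximating $|f|$ in $L^p$ by smooth compactly supported functions (which is possible since $|f|$ is continuous with compact support) delivers the lemma with $N=2N_1$.

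The main obstacle is justifying $K<\infty$ a priori, since the rearrangement above is circular otherwise. I plan to bypass this by iterating the first-paragraph inequality $n$ times before invoking any supremum: defining $h_0=f$ and $h_{k+1}:=|\partial_{i_{k+1}j_{k+1}}G^T_{c,B}h_k|$ recursively (with indices attaining the maximum at each stage) and extending the inequality from $C^\infty_K$ to each bounded continuous $h_k$ by the approximation above, after $n$ iterations one obtains
\[
  \mathcal{I}(f)\leq N_1 T^{1-\dbar/2p}\|f\|_{L^p}\sum_{k=0}^{n-1}\Bigl(\tfrac{\varepsilon d_0^2 N_2}{2}\Bigr)^k + \Bigl(\tfrac{\varepsilon d_0^2}{2}\Bigr)^n \mathcal{I}(h_n).
\]
For each fixed $f$ the remainder $\mathcal{I}(h_n)$ is finite (bounded by $T\|h_n\|_\infty$, finite by iterated application of Lemma~\ref{thm:3.6}), and the delicate technical step is showing that $\|h_n\|_\infty$ grows at most exponentially in $n$ via iterated use of Lemmas~\ref{thm:3.4}--\ref{thm:3.6}, so that for $\varepsilon$ chosen sufficiently small the geometric decay $(\varepsilon d_0^2/2)^n$ dominates this growth and the remainder vanishes. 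This trade-off between the geometric decay and the exponential growth of the iterated $L^\infty$ norms is what pins down the quantitative dependence of $\varepsilon$ on $d,\mu,B,p$.
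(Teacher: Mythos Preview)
Your contraction inequality and the closure argument in the first two paragraphs are correct and match the paper's proof exactly: combine Lemmas~\ref{thm:5.2} and~\ref{thm:5.3} with the $L^p$ bound of Lemma~\ref{thm:5.1} (equivalently Theorem~\ref{thm:4.1}) to obtain a self-referential inequality for the optimal constant, then absorb the right-hand side when $\varepsilon$ is small. You have also correctly identified the only real difficulty as the a priori finiteness of $K$.

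Your proposed fix for that difficulty does not work, however. The estimate~\eqref{eq:3.7} in Lemma~\ref{thm:3.6} bounds $\|\partial_{ij} G^T_{c,B} f\|_{L^\infty}$ in terms of $\|D^2_x f\|_{L^\infty}$, not $\|f\|_{L^\infty}$ (inspect~\eqref{eq:3.12} in the proof), so each step of the iteration costs two derivatives of the input; there is no inequality of the form $\|\partial_{ij} G^T_{c,B} h\|_{L^\infty} \leq C\|h\|_{L^\infty}$ with $C$ depending only on $B,\mu,T$. This is not an artifact of that particular lemma: $\partial_{ij} G^T_{c,B}$ is a Calder\'on--Zygmund-type singular integral, and such operators are famously unbounded on $L^\infty$. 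Consequently the growth rate of $\|h_n\|_\infty$ depends on $f$ through arbitrarily high derivatives, and any $\varepsilon$ chosen to beat that rate would have to depend on $f$ as well, contradicting the statement. There is a further issue you gloss over: the iterates $h_k$ contain absolute values and hence are not in $C^\infty_K$, so Lemma~\ref{thm:5.3} as stated does not apply to them and the recursive inequality itself needs justification at every step.

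The paper sidesteps the a priori finiteness problem by first treating the case where $a$ is a \emph{simple} process with respect to a deterministic time partition. On each subinterval one conditions on the filtration at its left endpoint; the conditioned process then has deterministic covariance, and Lemma~\ref{thm:5.2} applies directly to give~\eqref{eq:5.4} on that subinterval. Summing over the partition produces a finite bound that depends on the number of partition points. Once finiteness is in hand, the contraction argument of your second paragraph upgrades this to a bound depending only on $B,\mu,p$. The general progressive $a$ is then reached by approximation with simple processes. This is the argument of Lemmas~7.1.2--7.1.4 in Stroock--Varadhan~\cite{stroock1979mdp}, to which the paper defers for details.
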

\begin{proof}
  The proof of this lemma is somewhat involved and follows in the same
  way as Lemmas~7.1.2, 7.1.3, and 7.1.4 of \cite{stroock1979mdp}, so
  we only recall the main ideas for the convenience of the reader.
  First we consider the case where $a$ is a simple process with
  respect to a deterministic time partition and~\eqref{eq:5.5} may not
  hold.	 In this case, one can show that~\eqref{eq:5.4} holds when $N$
  is replaced by a constant $N_1$ which depends upon the number
  of points in the time partition.  This is done by conditioning on
  the information available at the start of each time interval in the
  partition and applying the estimate~\eqref{eq:5.1} to the
  conditioned process.

  We continue to consider the case where $a$ is a simple process, but
  we now produce a constant which does not depend upon the number of
  steps in the partition.  Set $\varepsilon = \sup_{(s,x) \in \bb R_+
    \cross \bb R^{d}} |a(s,x) - c(s)|$ and let $N_2 = N_2(\bb P)$
  denote the smallest constant such that~\eqref{eq:5.4} holds when $N$
  is replaced by $N_2$.	 The previous step ensures that $N_2$ is
  finite.  We will now show that $N_2$ is bounded by a constant which
  does not depend upon $\bb P$ when $\varepsilon$ is sufficiently
  small.  First we use Lemma~\ref{thm:5.2} and Lemma~\ref{thm:5.3} to
  produce a constant $N_3 = N_3(B, \mu, p)$ such that
  \begin{align*}
    \bb E^{\bb P}\Bigl[\int_s^T |f(t, X_t)| \, \diff t\Bigr]
    &\leq N_3 T^{1-\dbar/2p} \|f\|_{L^p([0,T)\cross\bb R^d)} \\
    & \qquad \quad
	 + \half \varepsilon N_2 T^{1-\dbar/2p}
	   \sum_{i,j=1}^{d_0}
	   \|\partial_{ij} G_{c,B}^T f\|_{L^p([0,T)\cross\bb R^d)}.
  \end{align*}
  We next apply Theorem~\ref{thm:4.1} to produce a constant $N_4 =
  N_4(d, \mu, B, p)$, such that
  \begin{align*}
      \bb E^{\bb P}\Bigl[\int_s^T |f(t, X_t)| \, \diff t\Bigr]
      &\leq (N_3 + \half \varepsilon d_0^2 N_2 N_4) T^{1-\dbar/2p}
	    \|f\|_{L^p([0,T)\cross\bb R^d)}
  \end{align*}
  We then take the supremum over the set of functions $f \in
  C^\infty_K([0,T)\cross \bb R^d)$ with $\|f\|_{L^p([0,T)\cross\bb
    R^d)} \leq 1$ to see that $N_2 \leq 2 N_3$ when $\varepsilon \leq
  1 / (d_0^2 N_4)$.  This gives a bound which depends upon $B$, $\mu$,
  and $p$ but does not depend upon the number of time steps in the
  partition. The general case may then be handled by approximation.
\end{proof}

\begin{remark} \label{thm:5.5} %
  When $d = d_0$, $a$ is of rank $d$ everywhere and much stronger
  results are available.  Let $a : \bb R_+ \cross \bb R^d \rightarrow
  S^d_{\mu}$ and $b : \bb R_+ \cross \bb R^d \rightarrow \bb R^d$ be
  bounded, measurable functions.  Then Krylov \cite{krylov1971ceft}
  has shown that there exists a constant $N$ which depends upon $d$,
  $\mu$, $\|b\|_{L^\infty(\bb R_+\cross \bb R^d)}$, and $T$ such that
  \begin{equation} \label{eq:5.6}
    \bb E^{\bb P}\Bigl[\int_0^T |f(t, X_t)| \, \diff t\Bigr]
    \leq N \|f \|_{L^{1+d}([0,T]\cross \bb R^d)},
  \end{equation}
  for any solution $\bb P$ to the $(a,b)$-martingale problem.  In
  particular, this result does not require $a$ to be well-approximated
  by a deterministic function of time.	One consequence of this
  estimate is the existence of weak solutions to SDEs with measurable,
  uniformly positive-definite covariance.  The reader may consult
  Theorem 2.3.4 and Theorem 2.6.1 of \cite{krylov1980cdp} for the
  proof of these results.  Another consequence of the
  estimate~\eqref{eq:5.6} is that weak uniqueness holds for SDEs in
  which the covariance function is VMO-continuous in the spacial
  variables.  The reader may consult Remark~2.2 of
  \cite{krylov2007peev} for a brief discussion of this fact.

  The estimate~\eqref{eq:5.6} is closely related the
  Aleksandrov-Bakelman-Pucci estimate of PDE theory and the parabolic
  extension due to Krylov and Tso.  These results depend in an
  essential way upon the geometry of convex functions in $\bb R^d$,
  and analogous results are not yet available for our geometric
  setting. One can consult Section 9 of \cite{garofalo2006npcf} for a
  further discussion of these issues in the context of the Heisenberg
  group.  The lack of such an estimate is the main impediment to
  obtaining a weak uniqueness result using the estimates obtained by
  Bramanti, Cerutti, and Manfredini in \cite{bramanti1996les}.
\end{remark}

We are dealing with martingale problems where the drift is unbounded,
but linear, so the next two lemmas will prove useful.

\begin{lemma} \label{thm:5.6} %
  Let $a^\alpha : \bb R_+ \cross \Omega^\alpha \rightarrow S^d_+$ and
  $b^\alpha : \bb R_+ \cross \Omega^\alpha \rightarrow \bb R^d$ be
  progressive processes, possibly defined on different spaces, and
  suppose that $X^\alpha$ is a continuous solution to the $(a^\alpha,
  b^\alpha)$-martingale problem for each $\alpha \in A$.  Further
  suppose that the random variables $\{X^\alpha_0\}_{\alpha\in A}$ are
  uniformly bounded, and that there exists a constant $N$ such that
  \begin{equation} \label{eq:5.7}
    \|a^\alpha_t\| + |b^\alpha_t|^2
    \leq N \bigl\{1 + \sup_{s \leq t} \, (X^\alpha_s)^2\bigr\},
    \qquad t \in\bb R_+, \, \alpha \in A.
  \end{equation}
  Then the collection of processes $\{X^\alpha\}_{\alpha \in A}$ is
  tight.
\end{lemma}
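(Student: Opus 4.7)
The plan is to establish tightness in $C(\bb R_+; \bb R^d)$ by (i) proving a uniform second-moment bound on $\sup_{t \leq T}|X^\alpha_t|$ through a localized Gronwall argument, and (ii) verifying Aldous' criterion for the increments via the semimartingale decomposition supplied by the martingale problem. Since $a^\alpha$ and $b^\alpha$ are only locally bounded in $\omega$, localization via hitting times is essential throughout.

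From the martingale problem, each $X^\alpha$ admits the continuous semimartingale decomposition $X^\alpha_t = X^\alpha_0 + \int_0^t b^\alpha_s \, ds + M^\alpha_t$, where $M^\alpha$ is a continuous local martingale whose components satisfy $\la M^{\alpha,i}, M^{\alpha,j}\ra_t = \int_0^t (a^\alpha_s)^{ij} \, ds$ for $i,j \leq d_0$ and vanish otherwise. Fix $T > 0$, set $\tau^\alpha_n = \inf\{t \geq 0 : |X^\alpha_t| \geq n\}$, and define $\psi^\alpha_n(t) = \bb E[\sup_{s \leq t \wedge \tau^\alpha_n}|X^\alpha_s|^2]$. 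Applying $(a+b+c)^2 \leq 3(a^2+b^2+c^2)$, Cauchy-Schwarz for the drift term, Doob's maximal inequality together with the Ito isometry for the stopped martingale, and then the growth condition \eqref{eq:5.7}, I would obtain constants $C_1, C_2$ depending only on $T, N, d$, and the uniform bound on $|X^\alpha_0|$ such that
\begin{equation*}
  \psi^\alpha_n(T) \leq C_1 + C_2 \int_0^T \psi^\alpha_n(t) \, dt, \qquad \alpha \in A, \; n \geq 1.
\end{equation*}
The key point is that the right-hand side of \eqref{eq:5.7} is expressed in terms of the running supremum, which after stopping at $\tau^\alpha_n$ is bounded by $n$, guaranteeing that every quantity is finite. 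Gronwall's inequality then yields $\psi^\alpha_n(T) \leq C_1 e^{C_2 T}$ uniformly in $\alpha$ and $n$; Markov's inequality gives $\bb P(\tau^\alpha_n \leq T) \leq C_1 e^{C_2 T}/n^2$, so $\tau^\alpha_n \to \infty$ almost surely, and monotone convergence delivers $\sup_\alpha \bb E[\sup_{t \leq T}|X^\alpha_t|^2] \leq C_1 e^{C_2 T}$.

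With this moment bound in hand, $M^\alpha$ is an $L^2$-bounded true martingale on $[0,T+1]$, and Aldous' criterion follows by optional stopping and Cauchy-Schwarz: for any $[0,T]$-valued stopping time $\sigma$ and $\delta \in (0,1]$,
\begin{align*}
  \bb E\bigl[|M^\alpha_{\sigma+\delta} - M^\alpha_\sigma|^2\bigr]
    &= \bb E\Bigl[\sum_{i=1}^{d_0}\int_\sigma^{\sigma+\delta}(a^\alpha_s)^{ii} \, ds\Bigr]
    \leq d N \delta \bigl(1 + \bb E[\sup_{t \leq T+1}|X^\alpha_t|^2]\bigr), \\
  \bb E\Bigl[\Bigl|\int_\sigma^{\sigma+\delta} b^\alpha_s \, ds\Bigr|^2\Bigr]
    &\leq \delta \, \bb E\Bigl[\int_\sigma^{\sigma+\delta}|b^\alpha_s|^2 \, ds\Bigr]
    \leq N \delta^2 \bigl(1 + \bb E[\sup_{t \leq T+1}|X^\alpha_t|^2]\bigr),
\end{align*}
both of which tend to zero uniformly in $\alpha$ as $\delta \to 0$. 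Combined with compact containment at each fixed time (via Markov's inequality applied to the moment bound) and tightness of the initial values, this yields tightness of $\{X^\alpha|_{[0,T]}\}$ in $C([0,T]; \bb R^d)$ for every $T > 0$, and hence tightness of $\{X^\alpha\}$ in $C(\bb R_+; \bb R^d)$ with the locally uniform topology.

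The main obstacle is the careful localization in the Gronwall step. Without stopping, the second moments of $\int_0^t b^\alpha_s \, ds$ and $M^\alpha_t$ could be infinite \emph{a priori}, so the integral inequality for $\psi^\alpha_n$ cannot be written down for the unstopped processes. The particular form of the growth condition \eqref{eq:5.7}, in which the coefficients at time $t$ are controlled by the running supremum of $|X^\alpha|$ up to time $t$, is what makes the truncated quantity $\psi^\alpha_n$ manifestly finite and yields a Gronwall constant independent of $n$; everything downstream is routine.
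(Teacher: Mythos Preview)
Your proof is correct. Both your argument and the paper's hinge on the same localized Gronwall step to obtain the uniform moment bound $\sup_\alpha \bb E\bigl[\sup_{t \leq T}|X^\alpha_t|^2\bigr] < \infty$, but they diverge in how tightness is extracted from it. The paper observes that the stopped processes $(X^\alpha)^{\tau^\alpha_n}$ solve martingale problems whose coefficients are uniformly bounded in $\alpha$, invokes the standard tightness result for that setting, and then uses the uniform estimate $\sup_\alpha \bb P[\tau^\alpha_n \leq t] \to 0$ (which is exactly your Markov--Gronwall conclusion) to transfer tightness back to the unstopped family. You instead feed the moment bound directly into Aldous' criterion via the semimartingale decomposition. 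Your route is self-contained and avoids appealing to the bounded-coefficient black box; the paper's route is shorter because it outsources the modulus-of-continuity estimate to a known result. One minor slip: in the statement of the lemma $a^\alpha$ takes values in $S^d_+$, so there is no reason for the quadratic covariations to vanish for $i,j > d_0$; your estimates work unchanged with the sum taken over all $d$ components.
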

\begin{proof}
  Let ${T^\alpha_n}$ denote the stopping time ${T^\alpha_n} = \inf\{ t
  \in \bb R_+ : |X^\alpha_t| \geq n \}$, and set $X^{\alpha,n} =
  (X^\alpha)^{T^\alpha_n}$. The process $X^{\alpha,n}$ is a
  solution to the $(\ind{[0, T^\alpha_n]}a^\alpha, \ind{[0,
    T^\alpha_n]}b^\alpha)$-martingale problem, and the processes
  $\ind{[0, T^\alpha_n]}a^\alpha$ and $\ind{[0, T^\alpha_n]}b^\alpha$
  are uniformly bounded with respect to $\alpha$, so we may conclude
  that the collection of processes $\{X^{\alpha,n}\}_{\alpha \in A}$
  is tight for each fixed $n$.

  It then follows from \eqref{eq:5.7}, the uniform boundedness of the
  random variables $\{X^\alpha_0\}_{\alpha\in A}$, the
  Burkholder-Davis-Gundy inequalities, Gronwall's Lemma, and
  Chebyshev's inequality that
  \begin{equation} \label{eq:5.8}
    \lim_{n \rightarrow \infty} \;
    \sup_{\alpha \in A} \; \bb P^\alpha[T^\alpha_n \leq t ] = 0,
    \qquad \text{for each $t \geq 0$.}
  \end{equation}
  The tightness of the collection $\{X^\alpha\}_{\alpha \in A}$ then
  follows from~\eqref{eq:5.8} and the tightness of
  $\{X^{\alpha,n}\}_{\alpha \in A}$ for each fixed $n$.
\end{proof}

\begin{lemma} \label{thm:5.7} %
  Let $(s,x) \in \bb R_+ \cross \bb R^d$, let $a : \bb R_+ \cross
  C(\bb R_+ \cross \bb R^d) \rightarrow S^d_+$ and $b : \bb R_+ \cross
  C(\bb R_+ \cross \bb R^d) \rightarrow S^d_+$ be $\bb
  C^d$-progressive, and suppose that the maps $\omega \rightarrow a(t,
  \omega)$ and $\omega \rightarrow b(t, \omega)$ are continuous for
  each fixed $t \geq 0$.  Further suppose that there exists a constant
  $N$ such that
  \begin{equation*}
    \|a_t\| + |b_t|^2 \leq N \bigl(1 + (X^*_t)^2\bigr),
    \qquad t \in \bb R_+.
  \end{equation*}
  Then there exists a solution to the $(a, b)$-martingale
  problem starting at $(s,x)$.
\end{lemma}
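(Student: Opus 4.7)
The plan is a standard truncation-plus-tightness argument. I would first replace the linearly growing coefficients by bounded, $\omega$-continuous approximations, appeal to classical existence for bounded path-dependent martingale problems to produce approximate solutions, and then use Lemma~\ref{thm:5.6} together with weak convergence to extract a limit that solves the original martingale problem.

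Concretely, fix a smooth cutoff $\chi_n \in C^\infty([0,\infty);[0,1])$ with $\chi_n \equiv 1$ on $[0,n]$ and $\chi_n \equiv 0$ on $[n+1,\infty)$, and set $a^n(t,\omega) = \chi_n(X^*_t(\omega))\, a(t,\omega)$ and $b^n(t,\omega) = \chi_n(X^*_t(\omega))\, b(t,\omega)$, where $X^*_t(\omega) = \sup_{u \leq t}|X_u(\omega)|$.  Each $a^n, b^n$ is bounded, $\bb C^d$-progressive, and continuous in $\omega$ for each fixed $t$, since $\omega \mapsto X^*_t(\omega)$ is continuous on $C(\bb R_+;\bb R^d)$ and $a, b$ are continuous in $\omega$ by hypothesis.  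Moreover the uniform growth bound $\|a^n_t\| + |b^n_t|^2 \leq N(1+(X^*_t)^2)$ continues to hold.  Existence of a solution $\bb P^n$ to the $(a^n, b^n)$-martingale problem starting at $(s,x)$ is then classical for bounded, $\omega$-continuous, progressive coefficients; one can, for example, construct it via an Euler-type approximation on a time grid, solving the resulting linear equation with frozen coefficients on each subinterval and passing to the limit via the continuity hypothesis.  Lemma~\ref{thm:5.6} then yields tightness of $\{\bb P^n\}$, so I may extract a weakly convergent subsequence $\bb P^{n_k} \to \bb P$.

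The remaining task is to verify that $\bb P$ solves the $(a,b)$-martingale problem.  The starting condition $\bb P(X_t = x,\ \forall t \leq s) = 1$ is preserved under weak convergence because the set $\{X_t = x \text{ for all } t \leq s\}$ is closed in $C(\bb R_+;\bb R^d)$.  For the martingale property, fix $f \in C^\infty_K(\bb R_+ \cross \bb R^d)$, times $s \leq r \leq t$, and a bounded $\scr C_r^d$-measurable continuous functional $\Phi$ on $C(\bb R_+;\bb R^d)$; then under $\bb P^{n_k}$ we have
\begin{equation*}
  \bb E^{\bb P^{n_k}}\Bigl[\Phi \cdot \bigl(f(t,X_t) - f(r,X_r) - \int_r^t \Lhat^{a^{n_k}, b^{n_k}} f(u, X_u)\, \diff u\bigr)\Bigr] = 0.
\end{equation*}
The boundary terms pass to the limit by weak convergence and the continuity and boundedness of $f$.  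For the integral term I would invoke the Skorohod representation theorem to obtain an almost-sure realization of the convergence, and then combine the $\omega$-continuity of $a, b$ with $\chi_{n_k} \to 1$ to deduce pointwise convergence of $\Lhat^{a^{n_k}, b^{n_k}} f(u, X_u)$ to $\Lhat^{a,b} f(u, X_u)$ along each limiting path.  Uniform integrability follows from the compact spatial support of $f$, which yields a bound of the form $|\Lhat^{a^{n_k}, b^{n_k}} f(u, X_u)| \leq C_f(1 + \|a^{n_k}_u\| + |b^{n_k}_u|)\, \indd{|X_u| \leq R_f}$, together with a uniform moment estimate $\sup_n \bb E^{\bb P^n}[(X^*_t)^p] < \infty$ obtained by applying the Burkholder--Davis--Gundy inequality and Gronwall's lemma exactly as in Lemma~\ref{thm:5.6}.

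The main obstacle is the passage to the limit in the integral term, where the $\omega$-continuity hypothesis on $a$ and $b$ is used essentially: without it, the integrands $\Lhat^{a^{n_k}, b^{n_k}} f(u, X_u)$ evaluated along Skorohod realizations need not converge, and the argument collapses.  Every other step reduces to routine bookkeeping, and the overall strategy closely parallels the standard construction of weak solutions to SDEs with continuous, linearly growing coefficients.
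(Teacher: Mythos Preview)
Your proposal is correct and follows essentially the same two-step strategy as the paper's proof: first invoke the classical Euler-scheme construction (Stroock--Varadhan, Theorem~6.1.6) for the bounded, $\omega$-continuous truncated coefficients, then use Lemma~\ref{thm:5.6} for tightness and pass to a weak limit. You have simply filled in the details of the limit passage that the paper leaves to the reader; one minor remark is that the indicator $\indd{|X_u|\leq R_f}$ in your uniform-integrability bound does not actually help, since $a^{n_k}_u$ and $b^{n_k}_u$ depend on $X^*_u$ rather than $X_u$, but your uniform moment estimate on $X^*_t$ is what carries the argument anyway.
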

\begin{proof}
  When $a$ and $b$ are bounded, the result follows by approximating
  the processes $a$ and $b$ using an Euler-type scheme and checking that
  any weak limit point of the approximations is a solution to the
  desired martingale problem.  The reader may consult Theorem~6.1.6 of
  \cite{stroock1979mdp} for the details.  The general case follows by
  truncating the coefficients and using the previous tightness result
  to find a limit point that solves the desired martingale problem.
\end{proof}

We now use this existence result to show that the localized martingale
problem is well-posed.

\begin{lemma} \label{thm:5.8} %
  Let $a : \bb R_+\cross \bb R^d \rightarrow \Smu$, $b: \bb R_+ \cross
  \bb R^{d} \rightarrow \bb R^{d}$, and $c : \bb R_+ \rightarrow \Smu$
  be bounded, measurable functions, and suppose that $b^i(t,x) = 0$
  for $i > d_0$.  Then there exists a constant $\varepsilon =
  \varepsilon(B, \mu) > 0$ such that the $(a(X), b(X) +
  BX)$-martingale problem is well-posed when
  \begin{equation} \label{eq:5.9}
    \|a(s,x) - c(s)\| \leq \varepsilon,
    \qquad \text{for all $(s,x) \in \bb R_+ \cross \bb R^d$.}
  \end{equation}
\end{lemma}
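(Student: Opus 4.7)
My plan is to reduce to the zero-drift case via Girsanov's theorem, establish existence by approximation, and then establish uniqueness via an iterative contraction argument built on Lemmas~\ref{thm:5.1}--\ref{thm:5.4}.

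First the reduction. Because $a \in \Smu$ is uniformly elliptic on the first $d_0$ coordinates and $b$ is bounded with $b^i = 0$ for $i > d_0$, the vector $b$ lies pointwise in the range of $\sigma = a^{1/2}$, so Novikov's criterion is trivially satisfied and the standard Girsanov change of measure for martingale problems (cf.\ Theorem~6.4.3 of \cite{stroock1979mdp}) yields a bijection between solutions of the $(a(X), b(X) + BX)$- and $(a(X), BX)$-martingale problems starting at the same initial condition. Thus it suffices to treat the case $b \equiv 0$. For existence, I would mollify $a$ to obtain continuous $a_n : \bb R_+ \cross \bb R^d \to \Smu$ still satisfying $\|a_n - c\|_\infty \le \varepsilon$, invoke Lemma~\ref{thm:5.7} to get solutions $\bb P_n$ of the $(a_n(X), BX)$-martingale problems, use Lemma~\ref{thm:5.6} to extract a weakly convergent subsequence $\bb P_n \to \bb P$, and pass to the limit in the martingale identity using the $L^p$-estimates of Lemma~\ref{thm:5.4} (whose constants depend only on $B, \mu, p$) to control the error from replacing $a_n$ by $a$.

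For uniqueness, let $\bb P_1, \bb P_2$ be two solutions of the $(a(X), BX)$-martingale problem starting at $(s, x)$.  Fix $T > s$ and $p > \dbar/2$, and for $f \in C^\infty_K([0, T) \cross \bb R^d)$ set
\begin{equation*}
  \Phi(f) = \bb E^{\bb P_1}\Bigl[\int_s^T f(t, X_t) \, dt\Bigr]
          - \bb E^{\bb P_2}\Bigl[\int_s^T f(t, X_t) \, dt\Bigr].
\end{equation*}
Applying Lemma~\ref{thm:5.3} to each $\bb P_i$ with $a_t = a(t, X_t)$ and subtracting cancels the deterministic term $G^T_{c,B} f(s,x)$ and yields $\Phi(f) = \Phi(Kf)$, where
\begin{equation*}
  Kf(t, x) = \frac{1}{2} \sum_{i,j=1}^{d_0}
             \bigl(a^{ij}(t,x) - c^{ij}(t)\bigr)\,
             \partial_{ij} G^T_{c,B} f(t, x).
\end{equation*}
Lemma~\ref{thm:5.1} gives $\|Kf\|_{L^p} \le \tfrac{1}{2} \varepsilon d_0^2 N \|f\|_{L^p}$ for some $N = N(B, \mu, p)$, so fixing $\varepsilon < 2 / (d_0^2 N)$ makes $K$ a strict contraction on $L^p([0, T) \cross \bb R^d)$. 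Lemma~\ref{thm:5.2} (for the $G^T_{c,B} f(s,x)$ term) and Lemma~\ref{thm:5.4} (for the expectation of $\phi_t$), together with a density argument, extend the identity $\Phi(f) = \Phi(Kf)$ to $f$ in the $L^p$-closure; iterating produces $\Phi(f) = \Phi(K^n f)$ with $|\Phi(K^n f)| \to 0$, hence $\Phi \equiv 0$ on $C^\infty_K$. Running this argument from every starting point $(s,x)$ shows that the Green's functionals of $\bb P_1$ and $\bb P_2$ coincide, and the standard Markov-induction argument of Stroock and Varadhan (cf.\ Theorem~6.2.3 of \cite{stroock1979mdp}) promotes this to $\bb P_1 = \bb P_2$.

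The main technical obstacle I anticipate is the $L^p$-density extension in the uniqueness step: Lemma~\ref{thm:5.3} is stated for $f \in C_K^\infty$, but $Kf$ need not be smooth or compactly supported, so to iterate $\Phi(f) = \Phi(Kf)$ one must show that both sides of~\eqref{eq:5.2} depend continuously on $f$ in the $L^p$-norm. This rests on Lemma~\ref{thm:5.2} for the deterministic term and on applying Lemma~\ref{thm:5.4} to $|Kf|$ itself, whose bound on $\bb E^{\bb P_i}[\int_s^T |Kf|(t,X_t)\, dt]$ depends on $\|f\|_{L^p}$ only through the Lemma~\ref{thm:5.1} constant, independently of $\bb P_i$.
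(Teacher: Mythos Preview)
Your proposal is correct and follows essentially the same path as the paper: Girsanov reduction to $b=0$, existence by mollification plus the tightness of Lemma~\ref{thm:5.6} and the $L^p$ Green's bound of Lemma~\ref{thm:5.4}, and uniqueness via the perturbation operator $K$ built from Lemma~\ref{thm:5.3}. The only cosmetic difference is in how the contraction is exploited for uniqueness: the paper extends $K$ to a bounded operator $U^T$ on $L^p$, observes that $I-U^T$ is invertible for $\varepsilon$ small, approximates $(I-U^T)^{-1}f$ by smooth functions, and passes to the limit in Lemma~\ref{thm:5.3} to obtain the closed formula $\bb E^{\bb P}[\int_s^T f] = V^T_{s,x}(I-U^T)^{-1}f$, whose right-hand side is manifestly $\bb P$-independent; you instead iterate $\Phi(f)=\Phi(K^n f)\to 0$. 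These are the same Neumann-series argument in two guises, and the density issue you flag is exactly the one the paper handles by approximation. One small point: your $\varepsilon$ must be the minimum of the contraction threshold $2/(d_0^2 N)$ from Lemma~\ref{thm:5.1} and the threshold from Lemma~\ref{thm:5.4}, since you need the latter to bound $\Phi$ on $L^p$ before you can iterate.
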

\begin{proof}
  As $a$ is uniformly positive definite and $b$ is bounded, we may use
  Girsanov's Theorem to place solutions to the $(a(X), b(X) +
  BX)$-martingale problem in one-to-one correspondence with solutions
  to the $(a(X), BX)$-martingale problem.  As a result, we may assume
  without loss of generality that $b = 0$.  Now fix some $p >
  \dbar/2$, and let $N_1 = N(B, \mu, p)$ and $\varepsilon_1 =
  \varepsilon_1(B, \mu, p)$ denote the constants obtained in
  Lemma~\ref{thm:5.4}.

  We now show the existence of a solution for each initial
  condition when $\varepsilon \leq \varepsilon_1$.
  By mollification in the spacial directions, we may find a sequence
  of functions $a_n : \bb R_+\cross\bb R^d \rightarrow \Smu$ such that
  $\lim_{n\rightarrow \infty} \|a - a_n\|_{L^p([0,T] \cross \bb R^d)}
  = 0$ for each fixed $T$; $\sup_{(s,x) \in \bb R_+\cross\bb R^d}
  \|a_n(s,x) - c(s)\| \leq \varepsilon$; and the functions $x \mapsto
  a(s,x)$ are continuous for each fixed $s \geq 0$.  The existence of a
  solution $\bb P_n$ to the $(a_n(X), BX)$-martingale problem starting
  at $(s,x) \in \bb R_+\cross\bb R^d$ then follows from
  Lemma~\ref{thm:5.7}.	The collection $\{\bb P_n\}_{n < \infty}$ is
  tight by Lemma~\ref{thm:5.6}, so may find a weak limit point $\bb
  P_\infty$.  The inequality~\eqref{eq:5.4} holds with $\bb P$ replaced
  by $\bb P_n$ for any $n$, so it also holds with $\bb P$ replaced by
  $\bb P_\infty$.  As a result, we have
  \begin{equation*}
      \bb E^{\bb P_m}\Bigl[\int_s^T \|a_n(u, X_u) - a_\ell(u, X_u)\|
				    \, \diff u\Bigr]
      \leq N T^{1 - \dbar/2p} \|a_n - a_\ell\|_{L^p([0,T]\cross \bb R^d)},
  \end{equation*}
  for any $\ell,m,n \in \bb N\cup\{\infty\}$ if we set
  $a_\infty = a$.  It then follows easily that $\bb P_\infty$ is a
  solution to the $(a(X), BX)$-martingale problem.

  We now show that the solution to the $(a(X),BX)$-martingale problem
  with initial condition $(s,x) \in \bb R_+ \cross \bb R^d$ is unique.
  Choose and $T > s$ and let $K^T$ denote the operator
  \begin{equation*}
    K^Tf(s,x)
    = \frac{1}{2} \sum_{i,j=1}^d \{a^{ij}(s,x) - c^{ij}(s)\}
				 \, \partial_{ij} G^T_{c,B} f,
  \end{equation*}
  for $f \in C^\infty_K([0,T)\cross\bb R^d)$.  Now let $U^T :
  L^p([0,T]\cross\bb R^d) \rightarrow L^p([0,T]\cross\bb R^d)$ denote
  the unique, bounded operator which agrees with $K^T$ on
  $C^\infty_K([0,T)\cross\bb R^d)$, and let $V^T_{s,x} :
  L^p([0,T]\cross\bb R^d) \rightarrow \bb R$ denote the unique,
  bounded functional which agrees with the map $f \mapsto G_{c,B}^T
  f(s,x)$ on $C^\infty_K([0,T)\cross\bb R^d)$.	The existence and
  uniqueness of these extensions follows from Lemma~\ref{thm:5.1} and
  Lemma~\ref{thm:5.2}.	Moreover, it follows from Lemma~\ref{thm:5.2},
  that we may choose $\varepsilon_2 = \varepsilon_2(B,\mu,p) > 0$ so
  small that the operator $I - U^T$ is invertible.

  Set $\varepsilon = \varepsilon_1 \minn \varepsilon_2$ and
  assume~\eqref{eq:5.9} holds.	Now fix any $f_\infty \in
  C^\infty_K([0,T)\cross\bb R^d)$ and choose $f_n \in
  C^\infty_K([0,T)\cross\bb R^d)$ such that $f_n$ converges to $(I-
  U^T)^{-1}f_\infty$ in $L^p([0,T]\cross\bb R^d)$.  It
  follows from Lemma~\ref{thm:5.3} that
  \begin{equation*}
    \bb E^{\bb P}\Bigl[\int_s^T (I - K^T)f_n(t, X_t) \, \diff t\Bigr]
    = G_{c,B}^T \, f_n(s,x),
  \end{equation*}
  for each $n$.	 We may then use Lemma~\ref{thm:5.4} to pass to the
  limit in this expression and obtain
  \begin{equation} \label{eq:5.10}
    \bb E^{\bb P}\Bigl[\int_s^T f_\infty(t, X_t) \, \diff t\Bigr]
    = V^T_{s,x} (I- U^T)^{-1} f_\infty.
  \end{equation}
  As \eqref{eq:5.10} holds for any solution to the
  $(a(X),BX)$-martingale problem with initial condition $(s,x)$ and the
  right-hand side does not depend upon $\bb P$, we may conclude that
  the solution to the $(a(X),BX)$-martingale problem starting at
  $(s,x)$ is unique (e.g.\ Cor.\ 6.2.5 of \cite{stroock1979mdp}).
\end{proof}

To extend the local result to a global result, we use a localization
procedure due to Stroock and Varadhan.	The main technical difficulty
that we encounter here is that the drift in our local solutions must
be unbounded; we cannot truncate the map $x \mapsto Bx$ without
disturbing the geometrical structure.  As a result, we need to extend
the localization results provided by Stroock and Varadhan to allow for
unbounded coefficients.	 We should also point out that when we make
use this result later in Theorem~\ref{thm:5.14}, we will not have any
uniform control on the growth of the local coefficients $(a_\alpha,
b_\alpha)_{\alpha \in \scr A}$, so it is important that the following
result only imposes conditions on the functions $a$ and $b$.

\begin{theorem} \label{thm:5.9} %
  Let $a : \bb R_+\cross \bb R^d \rightarrow S^{d}_+$ and $b : \bb
  R_+\cross \bb R^d \rightarrow \bb R^{d}$ be locally bounded,
  measurable functions, and suppose that there exists a constant $N$
  such that
  \begin{equation*}
    \|a(s,x)\| + |b(s,x)|^2 \leq N (1 + |x|^2),
    \qquad \text{for all $(s,x) \in \bb R_+\cross\bb R^d$.}
  \end{equation*}
  Let $\{G_\alpha\}_{\alpha\in A}$ be an open cover of $\bb R_+\cross
  \bb R^d$, and suppose that each open set in $\{G_\alpha\}_{\alpha\in
    A}$ is associated with a pair of locally bounded, measurable
  functions $a_\alpha : \bb R_+\cross \bb R^d \rightarrow S^{d}_+$ and
  $b_\alpha : \bb R_+\cross \bb R^d \rightarrow \bb R^{d}$ such that
  $a_\alpha = a$ and $b_\alpha = b$ on the set $G_\alpha$, and the
  $(a_\alpha, b_\alpha)$-martingale problem is well-posed.  Then the
  $(a,b)$-martingale problem is well-posed.
\end{theorem}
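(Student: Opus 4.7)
The plan is to adapt the localization argument of Stroock and Varadhan (Theorem~6.6.1 of~\cite{stroock1979mdp}) to handle the unbounded but linearly growing coefficients. Three observations drive everything: the linear growth bound, combined with the BDG/Gronwall argument used in Lemma~\ref{thm:5.6}, prevents explosion of any $(a,b)$-solution; each local well-posedness pins down the law on any stochastic interval during which the process remains inside a single $G_\alpha$; and regular conditional probabilities let us iterate across such intervals. To avoid measurable selection issues, I would fix once and for all a countable subcover $\{G_n\}_{n \in \bb N}$ of $\bb R_+ \cross \bb R^d$, which is available by second countability.

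For existence starting at $(s,x)$, set $n_0 = \min\{n : (s,x) \in G_n\}$, pick an open $U_0 \ni (s,x)$ with $\bar U_0 \subset G_{n_0}$ compact, and take the unique $(a_{n_0}, b_{n_0})$-solution $\bb Q_0$ starting at $(s,x)$. Let $\tau_1$ be the exit time of $(t, X_t)$ from $U_0$. Using regular conditional distributions on $\scr C^d_{\tau_1}$, splice on the unique $(a_{n_1}, b_{n_1})$-solution starting at $(\tau_1, X_{\tau_1})$, where $n_1(\omega) = \min\{n : (\tau_1, X_{\tau_1}(\omega)) \in G_n\}$ (measurable by the countable-minimum construction), and iterate. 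The non-explosion bound from Lemma~\ref{thm:5.6} gives $\tau_k \uparrow \infty$ almost surely, so the resulting measure $\bb P$ is defined on all of $[s,\infty)$; that it solves the $(a,b)$-martingale problem follows by verifying the martingale condition on each stochastic interval $(\tau_{k-1}, \tau_k]$, where $a$ and $b$ coincide with $a_{n_{k-1}}$ and $b_{n_{k-1}}$ along the trajectory.

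For uniqueness, let $\bb P_1, \bb P_2$ be two $(a,b)$-solutions starting at $(s,x)$. The key local claim is: with $U$, $\tau$, $G_\alpha$ as above and $\bb Q_\alpha$ the unique $(a_\alpha, b_\alpha)$-solution from $(s,x)$, one has $\bb P_i|_{\scr C^d_\tau} = \bb Q_\alpha|_{\scr C^d_\tau}$. To see this, extend $\bb P_i|_{\scr C^d_\tau}$ past $\tau$ by splicing on regular conditional distributions drawn from the Markov family for $(a_\alpha, b_\alpha)$. The spliced measure solves the full $(a_\alpha, b_\alpha)$-martingale problem starting at $(s,x)$—the pre-$\tau$ part because $(a,b) = (a_\alpha, b_\alpha)$ on $\bar U$, the post-$\tau$ part by construction—so by well-posedness of $(a_\alpha, b_\alpha)$ it equals $\bb Q_\alpha$, and in particular the two measures agree on $\scr C^d_\tau$. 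Iterating this along the same stopping times $\tau_k$ constructed above, using that the regular conditional distributions $\bb P_i(\cdot \mid \scr C^d_{\tau_k})$ are almost surely $(a,b)$-solutions starting at $(\tau_k, X_{\tau_k})$, yields $\bb P_1|_{\scr C^d_{\tau_k}} = \bb P_2|_{\scr C^d_{\tau_k}}$ for every $k$, hence $\bb P_1 = \bb P_2$.

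The main obstacle is the unboundedness of $b$, which rules out a direct appeal to the standard bounded-coefficient localization theorem. The workaround is that, up to the exit time $\tau$ from a compact set $\bar U \subset G_\alpha$, the effective coefficients seen by the stopped process are bounded, so both the splicing construction and the non-explosion estimates go through unchanged. Verifying carefully that the martingale property survives the splicing, and that the regular conditional distributions are genuine $(a,b)$-solutions from the shifted initial condition, are the main technical tasks, but both reduce to standard arguments from Chapter~6 of \cite{stroock1979mdp} once the compact confinement furnished by $U$ is in place.
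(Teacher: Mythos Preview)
Your overall strategy---reduce to a countable subcover, splice local solutions via regular conditional distributions, iterate, and for uniqueness extend $\bb P_i|_{\scr C^d_\tau}$ past $\tau$ using the local Markov family---is correct and is exactly what the paper does (the uniqueness step is Lemma~6.6.4 of \cite{stroock1979mdp}).

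The genuine gap is the assertion that $\tau_k \uparrow \infty$ almost surely. Lemma~\ref{thm:5.6} gives tightness, hence non-explosion and control on the modulus of continuity, but that by itself does not prevent the neighborhoods $U_k$ from shrinking so fast that $\sum_k (\tau_k - \tau_{k-1}) < \infty$ even on a bounded trajectory. Your construction leaves the size of $U_k$ unspecified, and your rule $n_k = \min\{n : (\tau_k, X_{\tau_k}) \in G_n\}$ makes the problem worse: $G_{n_k}$ could be the first set in the enumeration that barely grazes the point, leaving almost no room. The paper repairs this by building, in advance, a deterministic radius function. For each compact $K$ one picks $n(K)$ with $K \subset \bigcup_{i \le n(K)} G_i$, sets $\rho_0(K) = \inf_{\bar x \in K} \max_{i \le n(K)} \operatorname{dist}(\bar x, G_i^c) > 0$, and defines $\rho(t,r)$ as a monotone version of $\rho_0([0,t] \times \overline B^d_r(0))/2$. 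The index selector is then $\phi(s,x) = \min\{i : \operatorname{dist}((s,x), G_i^c) > \rho(s,|x|)\}$, and $T_n$ is the first time after $T_{n-1}$ that $t - T_{n-1} + |X_t - X_{T_{n-1}}| \ge \rho(T_{n-1}, |X_{T_{n-1}}|)$. On the event $\{T_n \le t,\, X^*_{T_n} < M\}$ every step then satisfies $T_m - T_{m-1} + |X_{T_m} - X_{T_{m-1}}| \ge \rho(t,M)$, which combined with the modulus-of-continuity half of tightness forces $T_m - T_{m-1} \ge \delta \wedge \rho(t,M)/2$ for a fixed $\delta > 0$, hence $T_n \to \infty$. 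This uniform-radius construction is the missing ingredient; once you insert it, the rest of your argument goes through.
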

\begin{proof}
  As $\bb R_+\cross \bb R^d$ is $\sigma$-compact, we may always find a
  countable subcover, so we may assume that $A = \bb N$.  For
  notational convenience, we will show the existence and uniqueness of
  a solution to $(a,b)$-martingale problem starting at $(0,0) \in \bb
  R_+ \cross \bb R^d$; however, it will be clear that the same
  argument works for any initial condition.

  For each compact set $K \subset \bb R_+\cross \bb R^d$, let $n(K)$
  denote the smallest natural number such that $K \subset
  \cup_{i=1}^{n(K)} G_i$, and define
  \begin{equation*}
    \rho_0(K) = \inf_{\xbar \in K} \; \max_{i \leq n(K)}
				   \; \dist(\xbar, G^c_i),
  \end{equation*}
  where $G^c_i$ denotes the complement of the set $G_i$.
  Observe that $\rho_0(K) > 0$ for each compact set $K$.  Now define
  \begin{equation} \label{eq:5.11}
    \rho(t,r) = 1 \minn
                \sup_{u \geq t, q\geq r} \;
                \rho_0\bigl([0,u]\cross \overline  B^d_q(0)\bigr) / 2,
              \qquad t,r \geq 0.
  \end{equation}
  We introduce the supremum to ensure that the function $\rho$ is
  nonincreasing in each coordinate.  Finally, let $\phi(s,x)$ denote
  the smallest natural number such that $\dist((s,x), G^c_{\phi(s,x)})
  > \rho(s,|x|)$, so $B^{1+d}_{\rho(s,x)}(s,x) \subset G_{\phi(s,x)}$
  for all $(s,x) \in \bb R_+ \cross \bb R^d$.  It not hard to check
  that $\phi$ is finitely-valued and measurable.

  We will proceed by patching together local solutions, so we will
  need notion for the concatenation of measures.  The notation that we
  adopt here is a slight modification of the notation adopted in
  Chapter 6 of \cite{stroock1979mdp}.  If $\bb P$ is a probability
  measure on $C(\bb R_+; \bb R^d)$, $T$ is a $\bb C^d$-stopping time,
  $Y$ is an $\bb N$-valued, $\scr C_T^d$-measurable random variable, and
  $\bb Q : \bb N \cross \bb R_+ \cross \bb R^d \cross \scr C^d
  \rightarrow [0,1]$ is a measurable probability kernel with
  \begin{equation} \label{eq:5.12}
    \bb Q^n_{s,x}[X_t = x, \; t \leq s] = 1,
    \qquad n \in \bb N,\, (s,x) \in \bb R_+ \cross \bb R^d,
  \end{equation}
  then we let $\bb P \otimes_{T,Y} \bb Q$ denote the probability measure
  on $C(\bb R_+; \bb R^d)$ which is uniquely characterized by the
  following properties:
  \begin{enumerate}
  \item If $f : C(\bb R_+; \bb R^d) \rightarrow \bb R$ is a bounded,
    measurable function, then we have $\bb E^{\bb P \otimes_{T,Y} \bb
      Q}[f(X^T)] = \bb E^{\bb P}[f(X^T)]$.
  \item If $f : C(\bb R_+; \bb R^d) \rightarrow \bb R$ is a bounded,
    measurable function, then the random variable
    \begin{equation} \label{eq:5.13}
      \omega \rightarrow
      \bb E^{Q^{Y(\omega)}_{T(\omega), X_T(\omega)}}\Bigl[
	   f\bigl(X(\omega) \ind{[0, T(\omega))}
	   + X \, \ind{[T(\omega),\infty)}\bigr)\Bigr]
    \end{equation}
    is a version of $\bb E^{\bb P \otimes_{T,Y} Q}[f(X) \cond \scr C_T]$.
  \end{enumerate}
  Intuitively, $\bb P \otimes_{T,Y} \bb Q$ corresponds to the law of a
  process that begins evolving according to $\bb P$ prior to time $T$.
  At time $T$, a new law is selected from the collection $\{\bb
  Q^n_{s,x}\}$ using the random variables $T$, $X_T$, and $Y$, and the
  process then begins evolving in accordance with this new law.
  Observe that, as a consequence of~\eqref{eq:5.12}, the process
  $X(\omega) \ind{[0, T(\omega))} + X \, \ind{[T(\omega),\infty)}$ is
  $\bb Q^{Y(\omega)}_{T(\omega), X_T(\omega)}$-a.s. continuous, and
  the right-hand side of~\eqref{eq:5.13} is well-defined.

  We now begin patching together measures to produce a solution to the
  $(a(X), b(X))$-martingale problem starting at $(0,0)$.  To do this,
  we inductively define the sequence of stopping times: $T_0 = 0$ and
  \begin{equation*}
    T_{n} = \inf\bigl\{t \geq T_{n-1} : t - T_{n-1} +
		|X_t - X_{T_{n-1}}| \geq \rho(T_{n-1}, |X_{T_{n-1}}|)\bigr\},
  \end{equation*}
  for $n \geq 1$.  This is the hitting time of a closed set, so it is
  a $\bb C^d$-stopping time, even though $\bb C^d$ is not
  right continuous.  We then define the $\scr C_{T_n}$-measurable
  random variables $Y_0 =\phi(0,0)$ and $Y_n = \phi(T_n, X_{T_n})$ for
  $n \geq 1$.  For each $n$, we may find a measurable kernel $\bb Q^n$
  such that $\bb Q^n_{s,x}$ is the unique solution to the $(a_n(X),
  b_n(X))$-martingale problem starting at $(s,x)$. The measurability
  of the map $(s,x) \mapsto \bb Q^n_{s,x}$ follows immediately from
  the fact that the $(a_n(X), b_n(X))$-martingale problem is
  well-posed.  The reader may consult Exercise~6.7.4 of
  \cite{stroock1979mdp} for a proof of this fact.  We now inductively
  define a sequence of probability measures: $\bb P^0 = \bb
  Q^{\phi(0,0)}_{0,0}$, and $\bb P^n = \bb P^{n-1} \otimes_{T_n, Y_n}
  \bb Q$.

  We may characterize the measures $\bb P^n$ as solutions to
  martingale problems.	Define the processes
  \begin{align*}
    A^n_t &= \ind{[0,T_{n+1}]} a(t,X_t)
	     + \ind{(T_{n+1}, \infty)} a_{Y^{n}}(t,X),
    \\
    B^n_t &= \ind{[0,T_{n+1}]} b(t,X_t)
	     + \ind{(T_{n+1}, \infty)} b_{Y^{n}}(t,X).
  \end{align*}
  The pairs of functions $(a,b)$ and $(a_{Y^{n}(\omega)},
  b_{Y^{n}(\omega)})$ agree on the set $G_{Y^{n}(\omega)}$ and
  $X_t(\omega) \in G_{Y^{n}(\omega)}$ for all $t \in
  [T^{n}(\omega), T^{n+1}(\omega)]$.  It then follows as in the proof
  of Lemma~6.6.4 of \cite{stroock1979mdp} that $\bb P^n$ is a
  solution to the $(A^n, B^n)$-martingale problem starting at $(0,0)$.

  We now show that
  \begin{equation} \label{eq:5.14}
    \lim_{n\rightarrow\infty} \bb P^{n}[T_{n+1} \leq t] = 0,
    \qquad \text{for each $t > 0$.}
  \end{equation}
  Fix $t > 0$ and $\varepsilon > 0$ and set $X^n = X^{T_{n}}$.	It follows
  from the previous characterization of $\bb P^n$, that $X^{n+1}$ is a
  solution to the $(\ind{[0,T_{n+1}]} a(X^n), \ind{[0,T_{n+1}]}
  b(X^n))$-martingale problem starting at $(0,0)$ under $\bb P^n$.  It
  then follows from Lemma~\ref{thm:5.6} that the sequence $\scr L(X^{n+1}
  \mid \bb P^n)$ is tight.  In particular, we may choose $M$ and then
  $\delta > 0$ such that
  \begin{align}
    \label{eq:5.15}
    \bb E^{n}[ (X^{n+1})^*_t \geq M] \leq \varepsilon/2,
    \qquad \text{ for all $n \in \bb N$,}
    \\
    \label{eq:5.16}
    \bb E^{n}[ m^\delta_t(X^{n+1}) \geq \rho(t, M)/2] \leq \varepsilon/2,
    \qquad \text{ for all $n \in \bb N$,}
  \end{align}
  where $m^\delta_t$ denotes the modulus of continuity
  \begin{equation} \label{eq:5.17}
    m^\delta_t(\omega) = \sup_{0 \leq r \leq s \leq (r + \delta) \minn t}
		    |\omega(s) - \omega(r)|,
    \qquad \omega \in C(\bb R_+; \bb R^d).
  \end{equation}

  We now show that
  \begin{equation} \label{eq:5.18} %
    \begin{aligned}[c]
      \lefteqn{ \{T_n \leq t \} \cap \{ X^*_{T_n} < M \}
                \cap \bigl\{ m^\delta_t(X^n) < \rho(t,M)/2 \bigr\} }
      \qquad \qquad \qquad \qquad \qquad \qquad \qquad \qquad \\
      &\subset \bigl\{ T_{n} \geq n \bigl(\delta \minn \rho(t,M)/2\bigr) \bigr\}.
    \end{aligned}
  \end{equation}
  If we suppose that $X^*_{T_n} < M$, then the process $X$ remains in
  the ball $B^d_M(0)$ during $[0, T_{n}]$.  If we further suppose that
  $T_n \leq t$, then we must have $T_m - T_{m-1} + |X_{T_m} -
  X_{T_{m-1}}| \geq \rho(t,M)$ for each $m \leq n$ because $\rho$ is
  nonincreasing.  In particular, we must have $T_m - T_{m-1} \geq
  \rho(t,M)/2$ or $|X_{T_m} - X_{T_{m-1}}| \geq \rho(t,M)/2$.  But if
  $m^\delta_t(X^n) < \rho(t,M)/2$, then it takes the process $X^n$ at
  least $\delta$ units of time to move by the amount $\rho(t,M)/2$, so
  we must have $T_m - T_{m-1} > \delta$ when $|X_{T_m} - X_{T_{m-1}}|
  \geq \rho(t,M)/2$.  Either way, we have $T_m -
  T_{m-1} > \delta \minn \rho(t,M)/2$ for all $m \leq n$ and
  summing over $1 \leq m \leq n$ gives~\eqref{eq:5.18}.

  It then follows immediately from~\eqref{eq:5.18}, that
  \begin{equation*} 
    \{ (X^n)^*_t < M \} \cap \{ m_t^\delta(X^n) < \rho(M) \}
    \subset
    \bigl\{ T_{n} \geq n \bigl(\delta \minn \rho(t,M)/2\bigr) \bigr\}
    \cup \{ T_{n} > t \}.
  \end{equation*}
  If we choose $n$ so large that $(n+1) (\delta \minn \rho(t,M)/2) >
  t$, then we may apply~\eqref{eq:5.15} and~\eqref{eq:5.16} to
  conclude that $\bb P^n[T_{n+1} \leq t] \leq \varepsilon$.  We have
  now shown that~\eqref{eq:5.14} holds.

  We are now essentially done.	It follows from~\eqref{eq:5.14} that
  there exists a unique probability measure $\bb P$ which agrees with
  $\bb P_n$ on $\scr C_{T_{n+1}}$ for all $n$
  (e.g.~\cite{stroock1979mdp} Theorem~1.3.5).  Moreover, we have shown
  that $X^{n+1}$ is a solution to the $(\ind{[0,T_{n+1}]} a(X^n),
  \ind{[0,T_{n+1}]} b(X^n))$-martingale problem starting at $(0,0)$
  under $\bb P$ for all $n$, and $T_n \rightarrow \infty$, $\bb
  P$-a.s., so we may conclude that $X$ is solution to the $(a(X),
  b(X))$-martingale problem starting at $(0,0)$ under $\bb P$.
  Finally, if $\Phat$ is another solution to the $(a(X),
  b(X))$-martingale problem starting at $(0,0)$ then it follows as in
  Lemma~6.6.4 in \cite{stroock1979mdp} that $\Phat$ must agree with
  $\bb P$ on $\scr C_{T_{n+1}}$ for each $n$.  As $\bb P$ is the
  unique measure with this property, we must have $\Phat = \bb P$.
\end{proof}

We now use this localization result to produce a global uniqueness result.

\begin{theorem} \label{thm:5.10} %
  Let $a: \bb R_+ \cross \bb R^d \rightarrow
  S^{d_0}_+$ and $b: \bb R_+ \cross \bb R^{d} \rightarrow \bb R^{d}$
  be measurable functions and suppose that
  \begin{alignat}{2}
    \label{eq:5.19}
    \inf_{s \in [0,T]} \inf_{\theta \in \bb R^d \setminus 0}
    \langle a(s,x) \theta, \theta \rangle / |\theta|^2 &> 0,
    &\qquad &\text{for all $T > 0$, $x\in \bb R^d$,}
    \\
    \label{eq:5.20}
    \lim_{y \rightarrow x} \sup_{s \in [0,T]}
    \| a(s,y) - a(s,x) \| &= 0,
    &\qquad &\text{for all $T > 0$, $x\in \bb R^d$,}
    \\
    \label{eq:5.21}
    b^{i}(s,x) &= 0, &\qquad &d_0 < i \leq d,\,
			(s,x) \in \bb R_+ \cross \bb R^d.
  \end{alignat}
  Further suppose that there exists a constant $N$ such that
  \begin{equation}
    \label{eq:5.22}
    \|a(s,x)\| + |b(s,x)|^2 \leq N (1 + |x|^2),
    \qquad \text{for all $(s,x) \in \bb R_+ \cross \bb R^d$.}
  \end{equation}
  Then the $(a(X), b(X) + BX)$-martingale problem is well-posed.
\end{theorem}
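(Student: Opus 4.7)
The plan is to apply the localization Theorem~\ref{thm:5.9} after producing, for each initial point $\xbar_0 = (s_0, x_0) \in \bb R_+ \cross \bb R^d$, an open neighborhood $G_{\xbar_0}$ and modified coefficients $(a_{\xbar_0}, \tilde b_{\xbar_0})$ that agree with $(a, \tilde b)$ on $G_{\xbar_0}$ and whose martingale problem is well-posed via Lemma~\ref{thm:5.8}. Here $\tilde b(s,x) \defeq b(s,x) + Bx$, which inherits from~\eqref{eq:5.22} (and the boundedness of $B$) a linear-growth bound of the form $\|a(s,x)\| + |\tilde b(s,x)|^2 \leq N'(1+|x|^2)$ required by Theorem~\ref{thm:5.9}.

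Fix $\xbar_0 = (s_0, x_0)$ and choose some $T > s_0$. Conditions~\eqref{eq:5.19} and~\eqref{eq:5.22} furnish a $\mu = \mu(\xbar_0, T) \geq 1$ with $a(s, x_0) \in \Smu$ for all $s \in [0, T]$, so the function $c(s) \defeq a(s \minn T, x_0)$ maps $\bb R_+$ into $\Smu$. Let $\varepsilon = \varepsilon(B, \mu) > 0$ denote the constant from Lemma~\ref{thm:5.8}. The uniform-in-$s$ spacial continuity~\eqref{eq:5.20} permits us to choose $r > 0$ so that $\|a(s, x) - c(s)\| < \varepsilon$ whenever $(s, x) \in [0, T) \cross B^d_r(x_0)$, and we set $G_{\xbar_0} \defeq [0, T) \cross B^d_r(x_0)$.

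Now define $a_{\xbar_0}$ to coincide with $a$ on $G_{\xbar_0}$ and with $c$ on its complement, and let $b_{\xbar_0}$ coincide with $b$ on $G_{\xbar_0}$ and vanish off $G_{\xbar_0}$. Then $a_{\xbar_0}$ is a measurable map into $\Smu$ with $\|a_{\xbar_0}(s,x) - c(s)\| \leq \varepsilon$ on all of $\bb R_+ \cross \bb R^d$, while $b_{\xbar_0}$ is bounded (since $b$ is locally bounded by~\eqref{eq:5.22} and $G_{\xbar_0}$ is bounded in $x$) with $b^i_{\xbar_0} \equiv 0$ for $i > d_0$ by~\eqref{eq:5.21}. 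Lemma~\ref{thm:5.8} therefore gives well-posedness of the $(a_{\xbar_0}(X), b_{\xbar_0}(X) + BX)$-martingale problem, i.e.\ of the $(a_{\xbar_0}(X), \tilde b_{\xbar_0}(X))$-martingale problem with $\tilde b_{\xbar_0}(s,x) \defeq b_{\xbar_0}(s,x) + Bx$. Since $(a_{\xbar_0}, \tilde b_{\xbar_0})$ agrees with $(a, \tilde b)$ on $G_{\xbar_0}$ and $\{G_{\xbar_0}\}_{\xbar_0 \in \bb R_+ \cross \bb R^d}$ is an open cover, Theorem~\ref{thm:5.9} yields well-posedness of the $(a(X), \tilde b(X))$-martingale problem, which is the asserted conclusion.

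The principal difficulty lies in manufacturing the local modification $a_{\xbar_0}$: one needs it to equal $a$ on the neighborhood $G_{\xbar_0}$, to remain inside the class $\Smu$ globally, and to stay within the Lemma~\ref{thm:5.8} threshold $\varepsilon(B, \mu)$ of the deterministic $c$ everywhere; this requires the full strength of~\eqref{eq:5.19}, \eqref{eq:5.20}, and~\eqref{eq:5.22} to control both the spectrum of $a(\cdot, x_0)$ on $[0, T]$ and the uniform oscillation of $a$ in $x$. A separate subtlety is that the linear drift $BX$ is unbounded and cannot be truncated without destroying the homogeneous-group structure exploited in Sections~\ref{sec:2}--\ref{sec:4}, which is precisely why the unbounded-coefficient extension of Stroock--Varadhan localization in Theorem~\ref{thm:5.9} is needed rather than the classical bounded-coefficient version.
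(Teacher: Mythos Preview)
Your approach is essentially the paper's: build, around each base point, localized coefficients satisfying the hypotheses of Lemma~\ref{thm:5.8}, then invoke the localization Theorem~\ref{thm:5.9}. The paper uses a projection $\pi_{s,x}$ onto a closed cylinder to define $a_{s,x}=a\circ\pi_{s,x}$ rather than your piecewise definition, but this is cosmetic.

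There is, however, a small but genuine gap in your construction of $a_{\xbar_0}$. You choose $\mu$ so that $a(s,x_0)\in S^{d_0}_\mu$ for $s\in[0,T]$, and then assert that $a_{\xbar_0}$ maps into $S^{d_0}_\mu$. On the complement of $G_{\xbar_0}$ this holds because $a_{\xbar_0}=c$, but on $G_{\xbar_0}$ you only know $\|a(s,x)-c(s)\|<\varepsilon$, which places the eigenvalues of $a(s,x)$ in $[1/\mu-\varepsilon,\,\mu+\varepsilon]$, not in $[1/\mu,\mu]$. Lemma~\ref{thm:5.8} requires $a$ to take values in $S^{d_0}_\mu$, and $\varepsilon=\varepsilon(B,\mu)$ is determined by that same $\mu$, so you cannot simply enlarge $\mu$ after the fact without changing $\varepsilon$. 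The paper resolves this circularity by first choosing $\mu(s,x)$ with slack, so that $a(t,x)\in S^{d_0}_{\mu(s,x)/2}$ on a time interval, and then using~\eqref{eq:5.20} to pick $r$ small enough that both $a(t,y)\in S^{d_0}_{\mu(s,x)}$ and $\|a(t,y)-a(t,x)\|\le\varepsilon(\mu(s,x))$ hold on the neighborhood. You should insert the same buffer.
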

\begin{proof}
  Let $\varepsilon(\mu) = \varepsilon(B, \mu) > 0$ denote the constant
  obtained in Lemma~\ref{thm:5.8} as a function of $\mu$.  For each
  point $(s,x) \in \bb R_+ \cross \bb R^d$, we may choose $\mu(s,x) >
  0$ such that $a(t,x) \in S^{d_0}_{\mu(s,x)/2}$ when $|t-s| \leq 1$.
  This follows from~\eqref{eq:5.19} and~\eqref{eq:5.22}.  We may then
  use~\eqref{eq:5.20} to choose $r(s,x) \in (0,1)$ such that $a(t,y) \in
  S^{d_0}_{\mu(s,x)}$ and $\|a(t,y) - a(t,x)\| \leq
  \varepsilon(\mu(s,x))$ when $(t,y) \defeq (s-r(s,x),
  s+r(s,x)) \cross B^d_{r(s,x)}(x)$.

  We now let $\pi_{s,x}$ denote the unique projection onto the closed,
  convex set $[s-r(s,x)/2, s+r(s,x)/2] \cross \overline
  B^d_{r(s,x)/2}(x)$, and we define the functions $a_{s,x}(t,y) \defeq
  a\circ\pi_{s,x}(t,y)$, $c_{s,x}(t) \defeq a(\{s-r(s,x)/2\} \vee t
  \wedge \{s+r(s,x)/2\}, x)$, and $b_{s,x}(t,y) \defeq
  b(t,y)\circ\pi_{s,x}(t,y)$.  We then see that $a_{s,x}: \bb R_+
  \cross \bb R^d \rightarrow S^{d_0}_{\mu(s,x)}$, $c_{s,x}: \bb R_+
  \rightarrow S^{d_0}_{\mu(s,x)}$, and $b_{s,x}$ are bounded;
  $b^i_{s,x} = 0$ for $d_0 < i \leq d$; and $\|a_{s,x}(t,y) -
  c_{s,x}(t)\| \leq \varepsilon(\mu(s,x))$ for all $(t,y) \in \bb R_+
  \cross \bb R^d$, so we may invoke Theorem~\ref{thm:5.8} to conclude
  that the $(a_{s,x}(X), b_{s,x}(X) + BX)$-martingale problem is
  well-posed.  We then define the open cover $\{G_{s,x}\}_{(s,x) \in
    \bb R_+\cross\bb R^d}$ where $G_{s,x} \defeq (s-r(s,x)/2,
  s+r(s,x)/2) \cross B^d_{r(s,x)/2}(x)$, and apply the
  previous localization result to conclude that the $(a,b)$-martingale
  problem is well-posed.
\end{proof}

The results provided up to this point provide very little flexibility
regarding the drift of the last $d-d_0$ components of the process: The
drift must take the form $b''(t,x) = B'' x$ where $B''$ denotes the
last $d-d_0$ rows of a matrix $B$ which satisfies the structure
conditions given in Section~\ref{sec:2}.  As a result, we cannot apply
the previous theorem to an SDE of the form:
\begin{equation} \label{eq:5.23}
  \left\{
  \begin{aligned}
    \diff X_t &= \sigma(t,X_t,Y_t) \, \diff W_t,
    \\
    \diff Y_t &= (X_t + Y_t) \, \diff t.
  \end{aligned}
  \right.
\end{equation}
However, if we define the process $Z \defeq X + Y$, then we see that
the process $(Z,Y)$ solves the SDE:
\begin{equation} \label{eq:5.24}
  \left\{
  \begin{aligned}
    \diff Z_t &= Z_t \, \diff t + \sigmahat(t,Z_t,Y_t) \, \diff W_t,
    \\
    \diff Y_t &= Z_t \, \diff t,
  \end{aligned}
  \right.
\end{equation}
where $\sigmahat(t,z,y) = \sigma(t,z-y,y)$.  Written in this form, the
drift now satisfies the required structure condition with
$B=\begin{bmatrix} 0 & 0 \\ 1 & 0 \end{bmatrix}$.  Moreover, the set
of weak solutions to the SDE~\eqref{eq:5.23} are in one-to-one
correspondence with the set of weak solutions to the
SDE~\eqref{eq:5.24}.  In particular, if $\sigmahat$ satisfies
conditions which ensure existence and uniqueness in law for the
SDE~\eqref{eq:5.24}, then existence and uniqueness in law also hold
for the SDE~\eqref{eq:5.23}.  We now generalize this observation
slightly to obtain a second local uniqueness result.  Recall that $D
b''$ denotes the $d_1 \cross (1+d)$ Jacobian matrix of the function
$b''$ when $b''$ takes values in $\bb R^{d_1}$.

\begin{lemma} \label{thm:5.11} %
  Suppose that $d = d_0 + d_1$ with $d_1 \leq d_0$.  Let $a : \bb R_+
  \cross \bb R^{d} \rightarrow \Smu$, $b': \bb R_+ \cross \bb R^{d}
  \rightarrow \bb R^{d_0}$, and $c : \bb R_+ \rightarrow \Smu$ be
  bounded, measurable functions, let $b'' \in C^{1,2,1}(\bb R_+ \cross
  \bb R^{d_0}\cross \bb R^{d_1}; \bb R^{d_1})$ denote a function with
  bounded derivatives, and let $b: \bb R_+ \cross \bb R^{d}
  \rightarrow \bb R^{d}$ denote the function $b(s,x) = (b'(s,x),
  b''(s,x))$.  Let $A_0 \in \bb M^{d_1\cross 1}$, $A_1 \in \bb
  M^{d_1\cross d_0}$ and $A_2 \in \bb M^{d_1\cross d_1}$ be matrices,
  and suppose that $A_1$ is of rank $d_1$.  Finally, let $A = [A_0 \;
  A_1 \; A_2] \in \bb M^{d_1 \cross (1+d)}$ denote the matrix obtained
  by appending the columns of $A_1$ and $A_2$ to $A_0$.  Then there
  exists a constant $\varepsilon = \varepsilon(A, \mu) > 0$ such that
  there is at most one solution to the $(a(X), b(X))$-martingale
  problem starting at $(s,x)$ when
  \begin{equation}
    \label{eq:5.25}
    \|a(t,y) - c(t)\| + \|D b''(t,y) - A \| \leq \varepsilon,
    \qquad \text{for all $(t,y) \in \bb R_+ \cross \bb R^d$.}
  \end{equation}
\end{lemma}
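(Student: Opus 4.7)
The plan is to reduce Lemma~\ref{thm:5.11} to the previous local uniqueness result Lemma~\ref{thm:5.8} by means of a time-dependent change of variables that linearizes the drift of the degenerate components, generalizing the trick that relates the SDEs~\eqref{eq:5.23} and~\eqref{eq:5.24}. Since $A_1 \in \bb M^{d_1 \cross d_0}$ has rank $d_1 \leq d_0$, choose a right inverse $A_1^+ \in \bb M^{d_0 \cross d_1}$ with $A_1 A_1^+ = I^{d_1}$, set $c_0 = b''(0,0)$, and write
\begin{equation*}
  b''(t,x) = A_0 t + A_1 x' + A_2 x'' + c_0 + R(t,x),
  \qquad R(0,0) = 0, \quad \|DR\| \leq \varepsilon,
\end{equation*}
so that $R$ captures the non-affine part of $b''$. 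The key algebraic identity is that the affine map
\begin{equation*}
  \Phi_1(t, x', x'') = \bigl(x' + A_1^+ A_2 x'',\; x'' - \tfrac{1}{2} A_0 t^2 - c_0 t\bigr)
\end{equation*}
satisfies $A_1 X' + A_2 X'' = A_1 Y'$ in the new variables $Y_t = \Phi_1(t, X_t)$, which after an application of Ito's formula yields
\begin{align*}
  dY'_t &= \bigl[b'(t,X_t) + A_1^+ A_2 b''(t, X_t)\bigr] dt + \sigma(t, X_t) dW_t, \\
  dY''_t &= \bigl[A_1 Y'_t + R(t, X_t)\bigr] dt.
\end{align*}

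The residual $R$ in the $Y''$-drift still obstructs a direct appeal to Lemma~\ref{thm:5.8}, so I would compose with a second, $\varepsilon$-small nonlinear change of variables
\begin{equation*}
  \Phi_2(t, Y', Y'') = \bigl(Y' + A_1^+ \bar R(t, Y),\; Y''\bigr),
  \qquad \bar R(t,Y) = R\bigl(t, \Phi_1\inv(t, Y)\bigr).
\end{equation*}
Because $\|A_1^+ D_Y \bar R\|$ is $O(\varepsilon)$ for $\varepsilon$ small (depending on $A$), $\Phi_2(t,\cdot)$ is a $C^2$-diffeomorphism of $\bb R^d$ by the implicit function theorem. Setting $Z_t = \Phi_2(t, Y_t)$ and substituting $Y'_t = Z'_t - A_1^+ \bar R(t, Y_t)$ into the $Z''$-drift, the identity $A_1 A_1^+ = I^{d_1}$ gives $dZ''_t = A_1 Z'_t \, dt$ exactly, while the $Z'$-component has a bounded drift correction and a diffusion coefficient within $O(\varepsilon)$ of $c(t)$. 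This places $Z$ as a solution to an $(\hat a(Z), \tilde b(Z) + BZ)$-martingale problem with $B = \begin{bmatrix} 0 & 0 \\ A_1 & 0 \end{bmatrix}$, which has the lower-triangular block form required in Section~\ref{sec:2}, and $\tilde b^i = 0$ for $i > d_0$ by construction. For $\varepsilon$ sufficiently small the hypotheses of Lemma~\ref{thm:5.8} are met, yielding uniqueness for the $Z$-problem, and the composite diffeomorphism $\Phi_2 \op \Phi_1$ transfers uniqueness back to the original $(a(X), b(X))$-martingale problem.

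\textbf{Main obstacle.} The most delicate point is verifying that, once the two changes of variables are carried out, the hypotheses of Lemma~\ref{thm:5.8} are really satisfied for the $Z$-process. The drift of $Y'$ naturally inherits the full linear growth of $b''$ through the term $A_1^+ A_2 b''(t,X)$, so after applying $\Phi_2$ the drift $\tilde b'$ still contains a linear-in-$Z$ contribution from $A_1 X'$ that is not bounded. Handling this cleanly requires splitting $\tilde b'$ as $\tilde b'_{\mathrm{bd}}(Z) + M Z$ for a suitable constant matrix $M$, using Girsanov's theorem on the bounded part (which is legitimate because the diffusion is uniformly positive definite in the $Z'$-directions), and then checking that the effective linear part can be absorbed into a redefinition that still respects the block structure of $B$. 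Verifying that all the $\varepsilon$-dependent perturbations — on $a$, on $\Phi_2$, and on the diffusion $\hat a$ — remain within the threshold $\varepsilon(B,\mu)$ of Lemma~\ref{thm:5.8} is the principal bookkeeping task.
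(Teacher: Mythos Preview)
Your overall strategy---reduce to Lemma~\ref{thm:5.8} via a change of variables that puts the degenerate drift into the form $\widehat B Z$---is exactly the paper's, but the execution differs. After removing $b'$ by Girsanov, the paper treats the case $d_1=d_0$ with a \emph{single} substitution $Y_t=\bigl(b''(t,X_t),\,X''_t\bigr)$: since $dX''=b''\,dt$, this yields $dY''=Y'\,dt$ in one stroke, with $\widehat B=\bigl[\begin{smallmatrix}0&0\\ I^{d_0}&0\end{smallmatrix}\bigr]$, and the map $f(s,x)=(s,b''(s,x),x'')$ is shown to be a global diffeomorphism by the same contraction argument you invoke for $\Phi_2$. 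The case $d_1<d_0$ is then reduced to $d_1=d_0$ by appending $d_0-d_1$ auxiliary components driven by extra rows extending $A_1$. Your two-step route $\Phi_2\circ\Phi_1$ together with the right inverse $A_1^+$ handles all $d_1\le d_0$ uniformly, at the price of heavier bookkeeping; both routes arrive at a martingale problem of the required shape.

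There is, however, a genuine problem with your proposed resolution of the main obstacle. The linear-in-$Z'$ contribution to the $Z'$-drift---dominated by $A_1^+A_2A_1\,Z'$, entering through the term $A_1^+A_2\,b''$ in $dY'$---cannot be ``absorbed into a redefinition that still respects the block structure of $B$'': the structural conditions of Section~\ref{sec:2} force the top-left $d_0\times d_0$ block of $B$ to vanish, so no term of the form $M'Z'$ in the first $d_0$ rows is admissible there. (The paper's single change of variables produces the analogous linear term $\langle D_{x''}b'',\,b''\rangle=D_{x''}b''\cdot Y'$ in $\widehat b$ and passes over the point when invoking Lemma~\ref{thm:5.8}.) The correct fix is not to modify $B$ but to remove the \emph{entire} first-block drift---including its linear part---by Girsanov: the diffusion is uniformly nondegenerate in the $Z'$-directions, the process is nonexplosive by the argument of Lemma~\ref{thm:5.6}, and for such processes the exponential local martingale is a true martingale on each $[0,T]$ (via localization and moment estimates, or Bene\v{s}'s criterion). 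With that extension of the Girsanov step in Lemma~\ref{thm:5.8}, either your route or the paper's completes the proof.
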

\begin{proof}
  As in the proof of Lemma~\ref{thm:5.8}, we may assume that $b' = 0$.
  We first show that the result holds when $d_1 = d_0$.  Until further
  notice, we will write a generic point $x \in \bb R^d$ in the form $x
  = (x',x'') \in \bb R^{d_0} \cross \bb R^{d_1}$, and we will let
  $\pi''$ denote the projection $\pi''(x',x'') = x''$.  Define
  \begin{align*}
    \Ahat &\defeq \begin{bmatrix} 1 & 0 & 0 \\
				  A_0 & A_1 & A_2 \\
				  0 & 0 & I^{d_0}
		 \end{bmatrix} \in M^{(1+d)},
    \\
    \Bhat &\defeq \begin{bmatrix}
		  0 & 0 \\
		  I^{d_0} &0
		\end{bmatrix} \in \bb M^{d},
  \end{align*}
  where $0$ denotes a matrix of zeros whose dimensions vary at each
  occurrence, and set $\muhat = 4 (\|A_1\|^2 + \|\Ahat\inv\|^{2})
  \mu$.  The assumption that $A_1$ is of full rank ensures that
  $\Ahat$ is invertible and the matrix $\Bhat$ clearly satisfies the
  structure conditions given in Section~\ref{sec:2}.  We then let
  $\varepsilon_1 = \varepsilon_1(\Bhat, \muhat)$ denote the constant
  obtained in Lemma~\ref{thm:5.8} and set
  \begin{equation*}
    \varepsilon = \min\biggl\{ \|A_1\|, \;
			       \frac{1}{2 \|\Ahat\inv\|},\;
			       \frac{\varepsilon_1}
				    {6 (1+\mu)\|A_1\|(1+\|A_1\|)}
			       \biggr\}.
  \end{equation*}
  Let $\ahat : \bb R_+ \cross \bb R^d \rightarrow S^{d_0}_+$ and
  $\bhat : \bb R_+ \cross \bb R^d \rightarrow \bb R^{d}$ denote the
  functions
  \begin{align*}
    \ahat(s,x)
    &= D_{x'}b^{\prime\prime}(s,x)\; a(s,x)\; D^T_{x'}b^{\prime\prime}(s,x),
    \\
    \bhat^i &= \indd{1 \leq i \leq d_0} \bigl\{
      \partial_s b^{\prime\prime,i}
	+ \half \tr(D^2_{x'} b^{\prime\prime,i} a)
	+ \langle D_{x''} b^{\prime\prime,i},\, b^{\prime\prime}
	\rangle\bigr\},
    \qquad 1 \leq i \leq d,
  \end{align*}
  where the arguments of $\bhat$ have been suppressed.	The
  derivatives of $b''$ are bounded, so $\bhat$ satisfies a
  linear growth condition.  Finally, let $\chat : \bb R_+ \rightarrow
  S^{d_0}_+$ denote the function $\chat(t) = A_1 \, c(t) \, A^T_1$.

  We now assume that~\eqref{eq:5.25} holds and show that $\ahat$ and
  $\chat$ take values in $S^{d_0}_{\muhat}$.  If $z \in \bb R^{d_0}$
  and $(s,x) \in \bb R_+ \cross \bb R^d$, then
  \begin{equation*}
    \frac{|z|}{\|A\inv_1\|} \leq |A^T_1 z|
    \leq |D^T_{x'}b^{\prime\prime}(s,x) \, z|
	 + \| D^T_{x'}b^{\prime\prime}(s,x) - A^T_1 \| \, |z|.
  \end{equation*}
  It then follows from our choice of $\varepsilon$ that
  \begin{equation*}
    |z| / (2 \|A\inv_1\|)
    \leq |D^T_{x'}b^{\prime\prime}(t,x) \, z|
    \leq 2 \|A_1\| |z|.
  \end{equation*}
  Moreover, as $\langle a(s,x) z, z\rangle \in [\mu\inv|z|^2, \mu
  |z|^2]$ and $\langle c(s) z, z\rangle \in [\mu\inv |z|^2, \mu
  |z|^2]$ by assumption, we see that $\ahat$ and $\chat$ take values
  in $S^{d_0}_{\muhat}$.  We also have
  \begin{align*}
    \|\ahat(s,x) - \chat(s)\|
    &\leq \|D_{x'}b^{\prime\prime}(s,x)\| \|a(s,x)\|
	 \|D_{x'}b^{\prime\prime}(s,x) - A_1\|
    \\	  &\qquad
	 + \|D_{x'}b^{\prime\prime}(s,x)\| \|a(s,x) - c(s)\| \|A_1\|
    \\	  &\qquad
	 + \|D_{x'}b^{\prime\prime}(s,x) - A_1\| \|c(s)\| \|A_1\|
    \\
    &\leq 6 (1+\mu)\|A_1\|(1+\|A_1\|) \varepsilon,
  \end{align*}
  so $\|\ahat(s,x) - \chat(s)\| \leq \varepsilon_1$ for all $(s,x) \in
  \bb R_+ \cross \bb R^d$.

  Now let $\bb P_1$ and $\bb P_2$ denote two solutions to the $(a(X),
  b(X))$-martingale problem starting at $(s,x) = (s,x',x'')$,
  let $f : \bb R_+ \cross \bb R^d \rightarrow \bb R_+ \cross \bb
  R^d$ denote the function $f(s,x',x'') = (s, b''(s,x',x''), x'')$, and
  let $Y : \bb R_+ \cross C(\bb R_+; \bb R^d)
  \rightarrow \bb R^d$ denote the process
  \begin{align*}
    Y_t = \indd{t < s} \bigl(b''(s,x), x''\bigr)
	  + \indd{t \geq s} \bigl(b''(t,X_t), \pi''(X_t)\bigr).
  \end{align*}

  We now show that $f$ maps $\bb R_+ \cross \bb R^d$ onto itself and
  admits a continuous inverse. To see this, fix any $(t_0,y_0) \in \bb
  R_+\cross \bb R^d$ and define the function $\phi_{t_0,y_0}(s,x) =
  \Ahat\inv (t_0,y_0) - \Ahat\inv\{f(s,x) - \Ahat (s,x)\}$.  Then
  $\phi_{t_0,y_0}$ maps $\bb R_+\cross\bb R^d$ into $\{t_0\}\cross \bb
  R^d$.  It follows directly from~\eqref{eq:5.25} that $\|D f(s,x) -
  \Ahat\| \leq \varepsilon \leq 1 / (2 \|\Ahat\inv\|)$ for all $(s,x)
  \in \bb R_+\cross \bb R^d$, so $\phi_{t_0,y_0}$ is a strict
  contraction, and $(s,x)$ is a fixed point of $\phi_{t_0,y_0}$ if and
  only if $f(s,x) = (t_0, y_0)$.  By varying $(t_0,y_0)$ and arguing
  as in the Inverse Function Theorem, we may conclude that $f$ admits
  a continuous inverse $f\inv : \bb R_+ \cross \bb R^d \rightarrow \bb
  R_+ \cross \bb R^d$.

  We now observe that $(t,Y_t) = f(t, X_t)$, so it follows from Ito's
  Lemma that $Y$ solves the $(\ahat \circ f\inv(Y),\, \bhat
  \circ f\inv(Y) + \Bhat Y)$-martingale problem starting at $f(s,x)$
  under both $\bb P_1$ and $\bb P_2$.  We may then apply
  Lemma~\ref{thm:5.8} to conclude that $Y$ has the same law under $\bb
  P_1$ and $\bb P_2$.  As $(t, X_t) = f\inv(t, Y_t)$, we see that $X$
  also has the same law under both $\bb P_1$ and $\bb P_2$, but then
  we must have $\bb P_1 = \bb P_2$.

  We now show that the result holds when $d_1 < d_0$.  We will do this
  by appending additional components to the process and then ignoring
  them.  Let $\Atilde_0 \in \bb M^{d_0\cross 1}$ denote the matrix
  obtained from $A_0$ be adding $d_0 - d_1$ zeros, let $\Atilde_1 \in
  \bb M^{d_0}$ denote a matrix obtained by appending $d_0 - d_1$
  linearly independent rows to $A_1$, let $\Atilde_2 \in \bb M^{d_0}$
  denote the matrix such that $\Atilde_2^{ij} = A_2^{ij}$ if $1 \leq
  i,j \leq d_1$ and $\Atilde_2^{ij} = 0$ otherwise, and set $\Atilde =
  [\Atilde_0\;\Atilde_1\;\Atilde_2]$.  Finally, let $\varepsilon =
  \varepsilon(\Atilde, \mu)$ denote the constant that was obtained in
  the previous case.

  For the remainder of the lemma, we will write a generic point in $x
  \in \bb R^{2d_0}$ in the form $x = (x', x'') \in \bb R^{d} \cross
  \bb R^{d_0 - d_1}$, so $x''$ denotes the extra coordinates that we
  are adding to place ourselves in the previous case.  We will let
  $\pi$ denote the projection $\pi(s,x',x'') = (s,x')$ that removes
  these extra coordinates.  Let $\bb P_1$ and $\bb P_2$ denote
  solutions to the $(a(X), b(X))$-martingale problem starting at
  $(s,x) \in \bb R_+\cross \bb R^{d}$ and assume that~\eqref{eq:5.25}
  holds.  Set $\atilde = a \circ \pi$, set $\btilde' = b' \circ \pi$,
  let $\btilde'' : \bb R_+ \cross \bb R^{2d_0} \rightarrow \bb
  R^{d_0}$ denote the function
  \begin{equation*}
    \btilde^{\prime\prime,i}(s,x)
    = \indd{1 \leq i \leq d_1} b^{\prime\prime,i} \circ \pi(s,x)
      + \indd{d_1 < i \leq d_0}
	\sum_{j=1}^{d_0} \Atilde^{ij}_1 x^{j},
    \qquad 1 \leq i \leq d_0,
  \end{equation*}
  and let $\btilde : \bb R_+ \cross \bb R^{2d_0} \rightarrow \bb
  R^{2d_0}$ denote the function $\btilde(s,x) = (\btilde'(s,x),
  \btilde''(s,x))$.  Now let $Z : \bb R_+ \cross C(\bb R_+; \bb R^d)
  \rightarrow \bb R^{2d_0}$ denote the process
  \begin{equation*}
    Z^i_t = \indd{1 \leq i \leq d} X^i_t
	     + \indd{d < i \leq 2d_0}
	       \sum_{j=1}^{d_0} \int_s^{s \vee t}
		 \Atilde^{(i-d_0),j} X^{j}_u \, \diff u,
    \qquad 1 \leq i \leq 2d_0.
  \end{equation*}
  Its not hard to check that $Z$ is a solution to $(\atilde(Z),
  \btilde(Z))$-martingale problem starting from $(s, x, 0) \in \bb
  R_+\cross \bb R^{d}\cross \bb R^{d_0-d_1}$ under both $\bb P_1$ and
  $\bb P_2$.  It is also easy to see that~\eqref{eq:5.25} implies
  \begin{equation*}
    \|\atilde(t,x) - \ctilde(t)\|
    + \|D \btilde''(t,x) - \Atilde \| \leq \varepsilon,
    \qquad \text{for all $(t,x) \in \bb R_+ \cross \bb R^{2d_0}$.}
  \end{equation*}
  As a result, we may apply the previous case to conclude that $Z$ has
  the same law under $\bb P_1$ and $\bb P_2$, but then $\bb P_1 = \bb
  P_2$.
\end{proof}

We now give a simple approximation lemma that we will need to
extend the previous result.

\begin{lemma}
  Let $f \in C^1(\bb R^d)$ and fix any $x \in \bb R^d$ and
  $\varepsilon > 0$.  Then there exists a constant $r > 0$ and a
  function $g \in C^1(\bb R^d)$ such that $g = f$ on $B_r(x)$ and $\|Dg(y)
  - Df(x)\| \leq \varepsilon$ for all $y \in \bb R^d$.
\end{lemma}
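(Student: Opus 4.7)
The plan is to modify $f$ outside a small ball around $x$ so that it matches the affine function $\ell(y) \defeq f(x) + Df(x)(y-x)$ far from $x$, using a smooth cutoff, and to exploit the continuity of $Df$ at $x$ to control the derivative of the interpolant.

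First I would choose a smooth cutoff $\phi \in C^\infty(\bb R^d; [0,1])$ with $\phi \equiv 1$ on $\overline B_{1}(0)$, $\phi \equiv 0$ off $B_{2}(0)$, and $\|D\phi\|_{L^\infty} \leq C_0$ for some universal constant $C_0$. For $r > 0$ to be chosen, set $\phi_r(y) = \phi((y-x)/r)$ and define
\begin{equation*}
  g(y) \defeq \phi_r(y) f(y) + (1 - \phi_r(y)) \ell(y)
       = \ell(y) + \phi_r(y) h(y),
\end{equation*}
where $h(y) \defeq f(y) - \ell(y)$. Then $g \in C^1(\bb R^d)$, $g = f$ on $B_r(x)$, and $g = \ell$ outside $B_{2r}(x)$, so $Dg = Df(x)$ there.

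Next I would estimate $Dg - Df(x)$ on the annulus $B_{2r}(x) \setminus B_r(x)$. Writing
\begin{equation*}
  Dg(y) - Df(x) = D\phi_r(y)\, h(y) + \phi_r(y)\, \bigl(Df(y) - Df(x)\bigr),
\end{equation*}
and introducing the modulus $\omega(\delta) \defeq \sup_{|z-x|\leq \delta} \|Df(z) - Df(x)\|$, the fundamental theorem of calculus gives $|h(y)| \leq |y-x|\, \omega(|y-x|)$, while $\|D\phi_r\|_{L^\infty} \leq C_0/r$. Hence for $y \in B_{2r}(x)$,
\begin{equation*}
  \|Dg(y) - Df(x)\| \leq (C_0/r) \cdot 2r\, \omega(2r) + \omega(2r)
                    \leq (2C_0 + 1)\, \omega(2r).
\end{equation*}
On $B_r(x)$ we have $Dg(y) - Df(x) = Df(y) - Df(x)$, which is bounded by $\omega(r) \leq \omega(2r)$, and off $B_{2r}(x)$ the difference vanishes. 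So the uniform bound is $(2C_0+1)\,\omega(2r)$ on all of $\bb R^d$.

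Finally, since $f \in C^1$, the function $Df$ is continuous at $x$, so $\omega(2r) \to 0$ as $r \to 0$. Choosing $r > 0$ small enough that $(2C_0 + 1)\, \omega(2r) \leq \varepsilon$ yields the desired $g$. There is no real obstacle here; the only point requiring mild care is the bookkeeping on the annulus showing that the singular factor $1/r$ from differentiating the cutoff is absorbed by the factor $|y-x| \leq 2r$ coming from the first-order Taylor remainder $h$.
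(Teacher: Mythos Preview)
Your proof is correct and follows essentially the same approach as the paper: both use a smooth cutoff to interpolate between $f$ and its first-order Taylor approximation at $x$, then exploit continuity of $Df$ at $x$ so that the $1/r$ factor from differentiating the cutoff is absorbed by the $O(r)$ size of the Taylor remainder. The paper first normalizes so that $x=0$, $f(x)=0$, and $Df(x)=0$ (equivalently, subtracts off the affine function $\ell$) and uses a one-dimensional cutoff composed with $|x|^2/r^2$, but these are cosmetic differences from your explicit interpolation $g = \ell + \phi_r(f-\ell)$.
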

\begin{proof}
  Let $\eta \in C^\infty_K(\bb R; [0,1])$ denote a function with
  $\eta(z) = 1$ when $|z| \leq 1$, $\eta(z) = 0$ when $|z| \geq 3$,
  and $|\eta'(x)| \leq 1$ everywhere.  By translation and the addition
  of an affine function, we may assume that $x = Df(x) = 0$ and $f(x)
  = 0$.	 We now choose $r$ such that $|f(x)| / |x| \leq \varepsilon /
  64 \sqrt{d}$ and $|Df(x)| \leq \varepsilon / 2\sqrt{d}$ when $|x| \leq
  4r$, and set $g(x) = \eta(|x|^2/r^2) f(x)$.  It is clear that $g(x)
  = f(x)$ on $B_r(x)$ and $Dg(x) = 0$ when $|x| \geq 4r$.  But when
  $|x| \leq 4 r$, we have
  \begin{equation*}
    |\partial_i g(x)| \leq 2 |x| |f(x)| / r^2 + |\partial_i f(x)|
    < \varepsilon / \sqrt{d},
  \end{equation*}
  so $g$ possesses the desired properties.
\end{proof}

\begin{corollary} \label{thm:5.13} %
  Let $f \in C^{1,2,1}(\bb R_+\cross\bb R^{d_0}\cross\bb R^{d_1})$,
  let $(s,x) \in \bb R_+\cross\bb R^{d_0 + d_1}$, and let $\varepsilon
  > 0$.	 Then there exists a function $g \in C^{1,2,1}(\bb
  R_+\cross\bb R^{d_0}\cross\bb R^{d_1})$ with bounded derivatives and
  a constant $r > 0$ such that
  \begin{alignat*}{2}
    g(t,y) = f(t,y),&
    &\qquad &\text{for all $(t,y) \in (s-r,s+r) \cross B^{d_0+d_1}_r(x)$},
    \\
    |Dg(t,y) - Df(s,x)| \leq \varepsilon,&
    &\qquad &\text{for all $(t,y) \in \bb R_+ \cross \bb R^{d_0 + d_1}$.}
  \end{alignat*}
\end{corollary}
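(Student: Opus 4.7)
The strategy is to mimic the cutoff construction of the preceding lemma, with only minor bookkeeping changes to preserve the mixed regularity $C^{1,2,1}$ and the boundedness of derivatives. First, I would reduce to the origin: after translating so that $(s,x) = (0,0)$, define the affine function $h(t,y) = f(s,x) + \langle Df(s,x), (t,y) \rangle$. Since $h$ is affine, it lies in $C^{1,2,1}$ with bounded derivatives --- its first derivatives are the components of $Df(s,x)$ and its second-order $y'$-derivatives vanish identically --- and therefore $f - h$ still lies in $C^{1,2,1}$, now with $(f-h)(0,0) = 0$ and $D(f-h)(0,0) = 0$. It suffices to construct $g_0 \in C^{1,2,1}$ with bounded derivatives such that $g_0 = f - h$ on a neighborhood of $(0,0)$ and $|Dg_0| \leq \varepsilon$ everywhere, for then $g \defeq g_0 + h$ satisfies $Dg - Df(s,x) = Dg_0$ and agrees with $f$ on the same neighborhood.

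Second, I would fix $\eta \in C^\infty_K(\bb R; [0,1])$ with $\eta \equiv 1$ on $[-1,1]$, $\eta \equiv 0$ off $[-4,4]$, and $|\eta'| \leq 1$, and set
\begin{equation*}
  g_0(t,y) = \eta\bigl((t^2 + |y|^2)/r^2\bigr) \, \bigl(f(t,y) - h(t,y)\bigr)
\end{equation*}
for a small $r > 0$ to be chosen. As a product of a compactly supported $C^\infty$ factor with a $C^{1,2,1}$ factor, $g_0$ lies in $C^{1,2,1}$ with support contained in $\{(t,y) : t^2 + |y|^2 \leq 4r^2\}$, so all of the relevant derivatives are automatically bounded. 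Moreover $g_0 = f - h$ on the ball $\{t^2 + |y|^2 \leq r^2\}$, which contains a product neighborhood of the form $(-r/\sqrt{2}, r/\sqrt{2}) \cross B^{d_0+d_1}_{r/\sqrt{2}}(0)$ required by the corollary (after translating back, this gives the $r$ in the statement).

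The remaining step is the uniform gradient bound. By the product rule,
\begin{equation*}
  \partial_i g_0(t,y)
    = 2r^{-2} (t,y)_i \, \eta'\bigl((t^2+|y|^2)/r^2\bigr) (f-h)(t,y)
      + \eta\bigl((t^2+|y|^2)/r^2\bigr) \, \partial_i(f-h)(t,y),
\end{equation*}
which vanishes outside $\{t^2 + |y|^2 \leq 4r^2\}$. Since $f-h$ and $D(f-h)$ are continuous and both vanish at $(0,0)$, for any $\delta > 0$ I may choose $r$ small enough that $|(f-h)(t,y)| \leq \delta|(t,y)|$ and $|D(f-h)(t,y)| \leq \delta$ whenever $|(t,y)| \leq 2r$. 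Inserting these bounds together with $|\eta'|\leq 1$ and $|(t,y)| \leq 2r$ on the support gives $|\partial_i g_0(t,y)| \leq 5\delta$, and picking $\delta$ small compared to $\varepsilon$ (and to the ambient dimension) finishes the construction. The main obstacle is purely bookkeeping: one must verify that the cutoff procedure preserves the mixed regularity class $C^{1,2,1}$ and leaves every required derivative bounded, both of which follow at once from the compact support of $g_0$ together with the assumed regularity of $f$.
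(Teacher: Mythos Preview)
Your proposal is correct and follows essentially the same cutoff construction as the paper. The only difference is cosmetic: the paper first extends $f$ from $\bb R_+$ to all of $\bb R$ in the time variable (citing a standard extension result) so that the previous lemma can be invoked verbatim on $\bb R^{1+d}$, whereas you rerun the cutoff argument directly on $\bb R_+\cross\bb R^{d_0+d_1}$; both routes yield the same $g$.
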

\begin{proof}
  We may construct a function $\fhat \in C^{1,2,1}(\bb R\cross\bb
  R^{d_0}\cross\bb R^{d_1})$ with $\fhat(t,y) = f(t,y)$ when $t
  \geq 0$ (e.g. \cite{gilbarg2001epde} 6.37), so the corollary
  follows immediately from proof of the previous lemma.
\end{proof}

We now have all the tools that we need for the final result of
the paper.

\begin{theorem} \label{thm:5.14} %
  Let $d = d_0 + d_1$ with $d_1 \leq d_0$ and let $x = (x',x'') \in
  \bb R^{d_0}\cross\bb R^{d_1}$ denote the generic point in $\bb R^d$.
  Let $a: \bb R_+ \cross \bb R^d \rightarrow S^{d_0}_+$ and $b': \bb
  R_+ \cross \bb R^{d} \rightarrow \bb R^{d_0}$ be measurable
  functions, let $b'' \in C^{1,2,1}(\bb R_+\cross \bb R^{d_0}\cross\bb
  R^{d_1}; \bb R^{d_1})$, and let $b : \bb R_+\cross\bb R^d
  \rightarrow \bb R^d$ denote the function $b(s,x) = (b'(s,x),
  b''(s,x))$.
  Suppose that
    \begin{alignat}{2}
    \label{eq:5.26}
    \inf_{s \in [0,T]} \inf_{\theta \in \bb R^d \setminus 0}
    \langle a(s,x) \theta, \theta \rangle / |\theta|^2 &> 0,
    &\qquad &\text{for all $T > 0$, $x\in \bb R^d$,}
    \\
    \label{eq:5.27}
    \lim_{y \rightarrow x} \sup_{s \in [0,T]}
    \| a(s,y) - a(s,x) \| &= 0,
    &\qquad &\text{for all $T > 0$, $x\in \bb R^d$.}
  \end{alignat}
  Further suppose that there exists a constant $N$ such that
  \begin{equation} \label{eq:5.28} %
    \|a(s,x)\| + |b(s,x)|^2 \leq N (1 + |x|^2),
    \quad \text{for all $(s,x) \in \bb R_+ \cross \bb R^d$,}
  \end{equation}
  and that
  \begin{equation} \label{eq:5.29} %
    \text{ $D_{x'} b''(s,x)$ is of rank $d_1$  for all $(s,x) \in \bb R_+
	   \cross \bb R^d$.}
  \end{equation}
  Then the $(a(X), b(X))$-martingale problem is well-posed.
\end{theorem}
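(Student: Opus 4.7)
The plan is to combine the local uniqueness result of Lemma~\ref{thm:5.11} with the localization theorem (Theorem~\ref{thm:5.9}), in direct parallel with the proof of Theorem~\ref{thm:5.10}. The only two new ingredients are (i) the need to also produce \emph{existence} of the localized martingale problems, since Lemma~\ref{thm:5.11} only gives uniqueness, and (ii) a careful choice of a bounded modification of $b''$ which preserves the nondegeneracy of $D_{x'} b''$.

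Fix a reference point $(s_0, x_0) \in \bb R_+ \cross \bb R^d$. Set $A = D b''(s_0, x_0)$ with block decomposition $[A_0 \; A_1 \; A_2]$ corresponding to $(\partial_s,\, D_{x'},\, D_{x''})$. Assumption~\eqref{eq:5.29} guarantees that $A_1$ has rank $d_1$. By~\eqref{eq:5.26} and~\eqref{eq:5.27}, I may choose $\mu = \mu(s_0,x_0) \geq 1$ and a space--time ball around $(s_0, x_0)$ on which $a$ takes values in $S^{d_0}_\mu$. Let $\varepsilon = \varepsilon(A,\mu)$ be the constant from Lemma~\ref{thm:5.11}. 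Shrinking the ball further using~\eqref{eq:5.27}, I may assume $\|a(t,y) - a(s_0,x_0)\| \leq \varepsilon/2$ on the ball, and by Corollary~\ref{thm:5.13} applied at $(s_0,x_0)$ I may choose $\tilde b'' \in C^{1,2,1}$ with bounded derivatives agreeing with $b''$ on a (possibly smaller) neighborhood $G_{s_0,x_0}$ and satisfying $\|D\tilde b''(t,y) - A\| \leq \varepsilon$ globally.

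Next, for each $(s_0,x_0)$ I produce globally bounded localizations $\tilde a$ and $\tilde b'$ that agree with $a$ and $b'$ on $G_{s_0,x_0}$, by composing with a projection onto the closure of a slightly larger ball, exactly as in the proof of Theorem~\ref{thm:5.10}. Taking $c(t) = a(\pi_0(t),x_0)$ for an appropriate time projection $\pi_0$, these localized coefficients satisfy condition~\eqref{eq:5.25} of Lemma~\ref{thm:5.11}, so the $(\tilde a(X),\tilde b(X))$-martingale problem (with $\tilde b = (\tilde b',\tilde b'')$) has at most one solution from any initial condition. For existence, I follow the transformation used in the proof of Lemma~\ref{thm:5.11}: the map $f(t,x',x'') = (t,\tilde b''(t,x',x''),x'')$ admits a global continuous inverse by the contraction argument there, and the image process $Y_t = (\tilde b''(t,X_t),\pi''(X_t))$ solves a martingale problem whose drift has the block-lower-triangular form $\Bhat Y + \hat b(Y)$ with diffusion $\hat a(Y) \in S^{d_0}_{\hat\mu}$ satisfying $\|\hat a - \hat c\| \leq \varepsilon_1(\Bhat,\hat\mu)$. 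Lemma~\ref{thm:5.8} then yields a solution $Y$, and pulling back through $f^{-1}$ via Itô's formula produces a solution $X$ of the original localized problem.

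Once both existence and uniqueness hold for the $(\tilde a(X),\tilde b(X))$-martingale problem for every base point $(s_0,x_0)$, the open sets $\{G_{s_0,x_0}\}$ form an open cover of $\bb R_+ \cross \bb R^d$ to which Theorem~\ref{thm:5.9} applies, giving well-posedness of the $(a(X),b(X))$-martingale problem. The linear growth hypothesis~\eqref{eq:5.28} is exactly what Theorem~\ref{thm:5.9} requires on the global coefficients. The step I expect to require the most care is the existence half of the localized problem, specifically verifying that the transformed process $Y$ genuinely falls under the hypotheses of Lemma~\ref{thm:5.8}: one must check that the drift $\hat b$ and diffusion $\hat a$ built from $\tilde a$, $\tilde b''$, and their derivatives are indeed bounded (which follows from $\tilde b''$ having bounded derivatives and $\tilde a$ being globally bounded), and that the inverse transformation produces a continuous $X$ solving the original martingale problem with the correct initial condition. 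Everything else is a routine transcription of the argument used for Theorem~\ref{thm:5.10}.
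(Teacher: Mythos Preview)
Your overall architecture matches the paper exactly: for each base point $(s_0,x_0)$ you manufacture localized coefficients using Corollary~\ref{thm:5.13} and the projection trick from Theorem~\ref{thm:5.10}, invoke Lemma~\ref{thm:5.11} for local uniqueness, and then feed the resulting open cover into Theorem~\ref{thm:5.9}. That is precisely the paper's proof.

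The one point of divergence is the existence half of the localized problem. The paper does \emph{not} pull existence back through the diffeomorphism $f$ of Lemma~\ref{thm:5.11}; it simply observes that the localized $a_{s,x}$ and $b''_{s,x}$ are continuous in space with linear growth and that $b'_{s,x}$ is bounded, so existence follows from Lemma~\ref{thm:5.7} (continuity plus tightness) together with a Girsanov shift to insert $b'_{s,x}$. This is shorter and avoids any smoothness analysis of $f^{-1}$. Your route via the transformation is plausible but heavier, and it has a snag you have not addressed: the transformed drift $\hat b$ in Lemma~\ref{thm:5.11} contains the term $\langle D_{x''}\tilde b^{\prime\prime,i},\,\tilde b''\rangle$, and since $\tilde b''$ itself has only linear growth (bounded \emph{derivatives}, not bounded values), $\hat b\circ f^{-1}$ is of linear growth in $y'$ rather than bounded. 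Lemma~\ref{thm:5.8} as stated requires a bounded drift $b$, so you would need either an additional stopping-time argument or an extension of that lemma. You also do not mention the $d_1<d_0$ case, which in Lemma~\ref{thm:5.11} requires padding with extra coordinates before the transformation applies. None of this is fatal, but the paper's direct appeal to Lemma~\ref{thm:5.7} sidesteps all of it.
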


\begin{proof}
  For each point $(s,x) \in \bb R_+ \cross \bb R^d$, we may again
  choose $\mu(s,x) > 0$ such that $a(t,x) \in S^{d_0}_{2\mu(s,x)}$
  when $|t-s| \leq 1$.  Now let $\varepsilon(A, \mu) > 0$ denote the
  constant obtained in Lemma~\ref{thm:5.11} and set $\delta(s,x)
  \defeq \varepsilon(Db''(s,x), \mu(s,x)) / 2$.  It follows from
  Corollary~\ref{thm:5.13} and condition~\eqref{eq:5.27} that we may
  find a function $b''_{s,x} \in C^{1,2,1}(\bb R_+\cross \bb
  R^{d_0}\cross\bb R^{d_1}; \bb R^{d_1})$ with bounded derivatives and
  a radius $r(s,x) > 0$ such that:
  \begin{itemize}[topsep=3pt]
  \item $a(t,y) \in S^{d_0}_{\mu(s,x)}$, $\|a(t,y) - a(t,x)\| \leq
    \delta(s,x)$, and $b''_{s,x}(t,y) = b''(t,y)$ when $(t,y)
    \in (s-r,s+r)\cross B^d_r(x)$, and
  \item $\|Db''_{s,x}(t,y) - Db''(s,x)\| \leq \delta(s,x)$ for
    all $(s,x) \in \bb R_+ \cross \bb R^d$.
  \end{itemize}

  We now set $G_{s,x} \defeq (s-r(s,x)/2, s+r(s,x)/2)\cross
  B^d_{r(s,x)/2}(x)$, let $\pi_{s,x}$ denote the unique projection
  onto the closure of $G_{s,x}$, and define the functions
  \begin{align*} 
    a_{s,x}(t,y) &\defeq a\circ\pi_{s,x}(t,y),
    \\
    c_{s,x}(t) &\defeq a(\{s-r(s,x)/2\} \vee t \wedge \{s+r(s,x)/2\}, x),
    \\
    b'_{s,x}(t,y) &\defeq b'(t,y)\circ\pi_{s,x}(t,y),
    \\
    b_{s,x}(t,y) &\defeq (b'_{s,x}(t,y), b''_{s,x}(t,y)).
  \end{align*}
  The functions $a_{s,x}$ and $b''_{s,x}$ are continuous in the
  spacial variable and satisfy a linear growth condition, the function
  $a_{s,x}$ is uniformly positive-definite, and the function
  $b'_{s,x}$ is bounded.  As a result, the existence of a solution to
  the $(a_{s,x}, b_{s,x})$-martingale problem for each initial
  condition follows from Lemma~\ref{thm:5.6} and Girsanov's Theorem.

  The functions $a_{s,x}$ and $c_{s,x}$ take values in
  $S^d_{\mu(s,x)}$ and
  \begin{align*}
    \|a_{s,x}(t,y) - c_{s,x}(t)\| + \|Db_{s,x}''(t,y) - Db''(s,x)\|
    \leq \varepsilon(Db''(s,x), \mu(s,x)),
  \end{align*}
  for all $(t,y) \in \bb R_+\cross\bb R^d$.
  It then follows from Lemma~\ref{thm:5.11} that the $(a_{s,x},
  b_{s,x})$-martingale problem is well-posed.  Finally, we have
  $a_{s,x} = a$ and $b_{s,x} = b$ on $G_{s,x}$ and
  $\{G_{s,x}\}_{(s,x) \in \bb R_+\cross\bb R^d}$ is an open cover of
  $\bb R_+\cross\bb R^d$, so we may invoke Theorem~\ref{thm:5.9} to
  conclude that that the $(a,b)$-martingale problem is well-posed.
\end{proof}


\def\cprime{$'$} \def\polhk#1{\setbox0=\hbox{#1}{\ooalign{\hidewidth
  \lower1.5ex\hbox{`}\hidewidth\crcr\unhbox0}}} \def\cprime{$'$}

\end{document}